\documentclass[final,onefignum,onetabnum]{siamart171218}
\usepackage{amsmath}
\usepackage{mathtools}
\usepackage{braket,amsfonts}
\usepackage{array}
\usepackage[caption=false]{subfig}
\captionsetup[subtable]{position=bottom}
\captionsetup[table]{position=bottom}
\usepackage{pgfplots}
\newsiamthm{claim}{Claim}
\newsiamremark{hypothesis}{Hypothesis}
\crefname{hypothesis}{Hypothesis}{Hypotheses}
\usepackage{algorithmic}
\usepackage{graphicx,epstopdf}
\Crefname{ALC@unique}{Line}{Lines}
\usepackage{amsopn}

\usepackage{xspace}
\usepackage{bold-extra}
\usepackage[most]{tcolorbox}

\colorlet{texcscolor}{blue!50!black}
\colorlet{texemcolor}{red!70!black}
\colorlet{texpreamble}{red!70!black}
\colorlet{codebackground}{black!25!white!25}


\lstdefinestyle{siamlatex}{%
  style=tcblatex,
  texcsstyle=*\color{texcscolor},
  texcsstyle=[2]\color{texemcolor},
  keywordstyle=[2]\color{texemcolor},
  moretexcs={cref,Cref,maketitle,mathcal,text,headers,email,url},
}

\tcbset{%
  colframe=black!75!white!75,
  coltitle=white,
  colback=codebackground, 
  colbacklower=white, 
  fonttitle=\bfseries,
  arc=0pt,outer arc=0pt,
  top=1pt,bottom=1pt,left=1mm,right=1mm,middle=1mm,boxsep=1mm,
  leftrule=0.3mm,rightrule=0.3mm,toprule=0.3mm,bottomrule=0.3mm,
  listing options={style=siamlatex}
}

\newtcblisting[use counter=example]{example}[2][]{%
  title={Example~\thetcbcounter: #2},#1}

\newtcbinputlisting[use counter=example]{\examplefile}[3][]{%
  title={Example~\thetcbcounter: #2},listing file={#3},#1}

\DeclareTotalTCBox{\code}{ v O{} }
{ 
  fontupper=\ttfamily\color{black},
  nobeforeafter,
  tcbox raise base,
  colback=codebackground,colframe=white,
  top=0pt,bottom=0pt,left=0mm,right=0mm,
  leftrule=0pt,rightrule=0pt,toprule=0mm,bottomrule=0mm,
  boxsep=0.5mm,
  #2}{#1}

\patchcmd\newpage{\vfil}{}{}{}
\flushbottom

\usepackage{tikz}
\usetikzlibrary{calc}
\usetikzlibrary{fit}

\usepackage{amssymb}
\usepackage{enumerate}
\usepackage{enumitem}
\usepackage{mathtools}
\allowdisplaybreaks

\newcommand{\red}{\color{black}}

\newcommand{\mR}{\mathbb{R}}

\newcommand{\mC}{\mathcal{C}}
\newcommand{\mL}{\mathcal{L}}

\def\I{\mathcal{I}}
\def\P{{\mathcal P}}
\def\Q{{\mathcal Q}}

\def\T{{\mathcal T}}
\def\0{{\boldsymbol 0}}
\def\ba{{\boldsymbol{a}}}
\def\bb{{\boldsymbol{b}}}

\newcommand{\diag}{\text{diag}}
\newcommand{\tUp}{\tilde{\Upsilon}}

\def\by{{\boldsymbol{y}}}

\def\bw{{\boldsymbol{w}}}
\def\bq{{\boldsymbol{q}}}
\def\bp{{\boldsymbol{p}}}
\def\bx{{\boldsymbol{x}}}
\def\be{{\boldsymbol{e}}}
\def\bu{{\boldsymbol{u}}}
\def\bd{{\boldsymbol{d}}}
\def\bl{{\boldsymbol{l}}}
\def\bz{{\boldsymbol{z}}}
\def\blambda{{\boldsymbol{\lambda}}}

\setcounter{MaxMatrixCols}{20}

\allowdisplaybreaks[1]

\title{Exponential Decay in the Sensitivity Analysis of Nonlinear Dynamic Programming\thanks{Preprint ANL/MCS-P9182-0520, Mathematics and Computer Science Division, Argonne National Laboratory}}

\author{Sen Na\thanks{Department of Statistics, The University of Chicago, Chicago, IL (\email{senna@uchicago.edu}).}
\and Mihai Anitescu\thanks{Mathematics and Computer Science Division, Argonne National Laboratory, Lemont, IL (\email{anitescu@mcs.anl.gov}).}
}

\begin{document}

\maketitle
	
\begin{abstract}
In this paper, we study the sensitivity of discrete-time dynamic programs with nonlinear dynamics and objective to perturbations in the initial conditions and reference parameters. Under uniform controllability and boundedness assumptions for the problem data, we prove that the directional derivative of the optimal state and control at time $k$, $\bx^*_{k}$ and $\bu^*_{k}$, with respect to the reference signal at time $i$, $\bd_{i}$, will have exponential decay in terms of $|k-i|$ with a decay rate~$\rho$ independent of the temporal horizon length. The key technical step is to prove that a version of the convexification approach proposed by Verschueren et al. can be applied to the KKT conditions and results in a convex quadratic program with uniformly bounded data. In turn, Riccati techniques can be further employed to obtain the sensitivity result, borne from the observation that the directional derivatives are solutions of quadratic programs with structure  similar to the KKT conditions themselves. We validate our findings with numerical experiments on a small nonlinear, nonconvex, dynamic program. 

\end{abstract}
	
\begin{keywords}
Sensitivity Analysis, Nonlinear Dynamic Programming, Optimization
\end{keywords}

\section{Introduction}\label{sec:1}

We consider the following discrete-time nonlinear dynamic programming (NLDP) problem with recursive equality constraints:
\begin{subequations}\label{pro:1}
\begin{align}
\min _{\bx,\bu}\text{\ \ } &\sum_{k=0}^{N-1} g_k(\bx_k,\bu_k,\bd_k) + g_N(\bx_N),  \label{pro:1a}\\
\text{s.t.} \text{\ \ } &\bx_{k+1} = f_k(\bx_k, \bu_k,\bd_k),\ \ \ \ k = 0, 1, \ldots, N-1,  \label{pro:1b}\\
&\bx_0 = \bar{\bx}_0. \label{pro:1c}
\end{align}
\end{subequations}
Here $\bx_k\in\mR^{n_x}$ is the state variable, $\bu_k\in\mR^{n_u}$ is the control variable, and $\bd_k\in\mR^{n_d}$~is {\red a reference variable which is externally provided data;} $g_k:\mR^{n_x}\times\mR^{n_u}\times\mR^{n_d}\rightarrow\mR$, $f_k:\mR^{n_x}\times\mR^{n_u}\times\mR^{n_d}\rightarrow\mR^{n_{x}}$
are~the cost function and constraint function for stage $k$, respectively; $N$ is the temporal horizon length; and $\bar{\bx}_0$ is the initial state variable. We also define $\bd_{-1} = \bar{\bx}_0$ for simplicity. In this {\red paper}, {\red we let $\bd = (\bd_{-1}; \bd_0; \ldots;\bd_{N-1})$ and} assume $g_k, f_k$ are both twice continuously differentiable.

Problem (\ref{pro:1}) includes a wide range of classical dynamic programs (DPs) as special cases. In our context, we call it quadratic dynamic programming (QDP) whenever, for any stage $k$, $g_k$ is quadratic and $f_k$ is affine. Since the reference vector is a parameter to the problem, we treat it as the input and aim to (locally) analyze the dependence of the optimal state and control variable on it. Specifically, we are interested in answering the following question:
\begin{center}
\textit{For any two stages $k$ and $i$, how much do optimal $\bx_k$ and $\bu_k$ change \\ when we perturb $\bd_i$?}
\end{center}
We can formulate this question using directional derivatives. Suppose we perturb $\bd$ along a parameterized path:
\begin{align}\label{equ:5}
\bd(\epsilon) = \bd^0 + \epsilon\bl + o(\epsilon),
\end{align}
where $\epsilon$ is the parameter of the path, $\bd^0$ is the unperturbed variable, and the unit~vector $\bl$ indicates the perturbation direction. Based on the path (\ref{equ:5}), we can define the following two directional derivatives (if the limits exist):
\begin{align}\label{equ:6}
\bp_k  &= \lim\limits_{\epsilon\searrow0}\frac{\bx_k^*(\bd(\epsilon))-\bx_k^*(\bd(0))}{\epsilon}, \text{\ \ } \bq_k  = \lim\limits_{\epsilon\searrow0}\frac{\bu_k^*(\bd(\epsilon))-\bu_k^*(\bd(0))}{\epsilon},
\end{align}
where $\bx^*_k(\bd), \bu^*_k(\bd)$ are the optimal solution of Problem (\ref{pro:1}) with specified input~$\bd$. For conciseness, we ignore the dependence of $\bl$ in $\bp_k, \bq_k$ in (\ref{equ:6}). Suppose $\be_q$, $q=1,2,\ldots,Nn_d+n_x$, to be the $q$th canonical basis vector in $\mR^{n_x + Nn_d}$. Then, if we let $\bl = \be_{n_x + i\cdot n_d + j} $, $0 \leq i \leq N-1,\; 1 \leq j \leq n_d$, our perturbation occurs only at the temporal stage $i$, in entry $j$; and, particularly, {\red $\{\be_j\}_{j= 1}^{n_x}$} corresponds to the perturbation at the initial constraints. According to \eqref{equ:6} we locally have $\|\bx_k(\bd(\epsilon)) - \bx_k(\bd(0))\|\approx \epsilon\|\bp_k\|$ (the same for $\bu_k$). With this perturbation structure,  analyzing the magnitude of $\bp_k$ and $\bq_k$ measures the effect on stage $k$ caused by the perturbations at stage $i$.

Some results have been proposed before for this type of local sensitivity analysis. For example, when $g_k(\bx_k, \bu_k, \bd_k) = (\bx_k - \bd_k)^TQ_k(\bx_k - \bd_k) + \bu_k^TR_k\bu_k$ with $Q_k,R_k \succ 0, $ and $f_k$ is an affine function depending only on $(\bx_k, \bu_k)$, Xu and Anitescu \cite{Xu2018Exponentially} showed that the magnitude of  $\bp_k$ and $\bq_k$ for this special case of QDP decays exponentially in $|k - i|$. This observation was crucial for designing a temporal decomposition approach for long-horizon linear-quadratic dynamic programming.
 A similar property, asymptotic decay, was assumed for the nonlinear case in \cite{shin2019parallel} and also resulted in the ability to decompose long-horizon nonlinear dynamic programs and enable parallel computation. It also played a critical role in the analysis of the convergence of model predictive control and, in particular, characterizing its operational horizon \cite{Xu2017Exponentially}. The main objective of this research note is to extend these results to the case of nonconvex QDP and, more generally, nonconvex NLDP.

Dealing with indefinite quadratic matrices in QPs is an important issue in general NLP. Many high-performance solvers are built assuming the positive definiteness of the Hessian matrix, such as the QP solvers HPMPC~\cite{Frison2014High} and qpOASES~\cite{Ferreau2014qpOASES} and the NLP solver FORCES~\cite{Domahidi2012Efficient}. The usual way to accommodate the indefiniteness is to perturb it to a positive matrix, to make sure the calculated step is indeed a descent direction. For example, Levenberg and Marquardt added a multiple of the identity matrix to the Hessian (see \cite{More1978Levenberg, Hanke1997regularizing}) and showed that under some conditions it can achieve superlinear convergence for unconstrained NLP. Nocedal and Wright  \cite{Nocedal2006Numerical} described several methods that directly modify the factors of the Cholesky factorization or Bunch-Parlett-Kauffman factorization for indefinite Hessians. For the constrained case, regularization approaches targeting the KKT matrix have been proposed and applied in interior point methods such as IPOPT~\cite{Waechter2005implementation} and its variants \cite{Zavala2008Interior, Biegler2009Large, Wan2016Structured}. Recently, targeting QDPs stemming from NLDP, Verschueren et al. proposed  transforming the indefinite Hessian to a positive definite matrix, at a point where the second-order conditions hold, while \textit{keeping the reduced Hessian unchanged} \cite{Verschueren2017Sparsity}. We call this approach a \textit{convexification procedure}. In \cite{Verschueren2017Sparsity} the researchers showed that this convexification can be conducted independently of the solvers, preserve the block sparsity structure of the problem, and have linear computational complexity with respect to the horizon length.

The convexification procedure from \cite{Verschueren2017Sparsity} plays an important role in this work, although for our ends we will need to enhance it in several directions. First, it depends on a certain regularization parameter whose selection is unclear because the analysis in \cite{Verschueren2017Sparsity} is centered on proving only its existence using a continuity argument. Second, although not crucial for convexification itself, that analysis does not include linear terms in the objective. In the present paper, we enhance their convexification procedure by addressing these limitations. In particular, we modify it to allow for linear terms in the objective, and we  connect the range of the regularization parameter to quantities characterizing the optimality of the primal vector. In turn these enhancements allow us to prove that, when the system is controllable, the convexification results in the transformed data, including the linear terms, being bounded independent of the horizon. As a result, as was also the goal in \cite{Verschueren2017Sparsity}, we can now apply a Riccati recursion to solve the QDP. Moreover, this recursion is exponentially stable, and we can then extend some of the results in \cite{Xu2018Exponentially} to prove the exponential decay of the sensitivity in the NLDP  case, which is far more general than the QDP case studied in \cite{Xu2018Exponentially}.

\vskip 4pt
\noindent{\bf Notations:} Throughout the paper, we use bold symbols to denote column vectors. We use $I$ to denote the identity matrix whose dimension is always clear from the context. Given an integer $k$, we define $[k] = \{0, 1, \ldots, k\}$ to be all integer indices from 0 to $k$. We let $i\wedge j = \min(i,j)$ {\red and $i\vee j = \max(i,j)$} when $i, j$ are two scalars. For any~vectors $\ba$ and $\bb$, we use $(\ba;\bb)$ to denote a long column vector that stacks them together. We also have $\text{supp}(\ba)$ to be the support set of $\ba$. Without subscript, $\bx, \bu, \bd, \bp$, and $\bq$ denote the corresponding variable for the entire horizon. We also define $\bz = (\bx_0; \bu_0; \ldots; \bx_{N-1}; \bu_{N-1}; \bx_{N})$ to be the whole decision variable ordered by stages. But we may also interpret $\bz = (\bx, \bu)$ when connecting it to the state $\bx$ and control $\bu$ vectors. Let $n_z = (N+1)n_x + Nn_{u}$ be the dimension of $\bz$. Similarly, we define $\bw = (\bp_0; \bq_0; \ldots; \bp_{N-1};\bq_{N-1}; \bp_N)\in\mR^{n_z}$. Given a sequence of matrices $A_i$, we let $\diag(A_1, A_2, \ldots)$ be the block diagonal matrix, whose blocks from upper left to bottom right are specified by $A_1, A_2,\ldots$. We also define $\prod_{i=m}^{n}A_i = A_nA_{n-1}...A_m$ if $n\geq m$ and $I$ otherwise. Similarly, we let $\sum_{i=m}^{n}A_i = 0$ if $n<m$. For a positive definite matrix $A$ and any vector $\bx$, we define $\|\bx\|_A = \sqrt{\bx^TA\bx}$. We also use $\|\cdot\|$ to denote the $l_2$ norm for vector or operator norm for matrix. For the dynamics function $f_k(\bx_k,\bu_k,\bd_k)$, we let $\nabla_{\bu_k}f_k = \frac{\partial f_k}{\partial \bu_k}\in\mR^{n_u\times n_x}$.

\vskip 4pt
\noindent{\bf Structure of the paper:} In section \ref{sec:2}, we introduce some preliminaries and derive the indefinite QP whose solution is the directional derivatives $(\bp, \bq)$. In section \ref{sec:4}, we show the details of {\red the} convexification procedure and propose a method for setting the regularization parameter. In section \ref{sec:5}, we probe the properties of the outputs of the algorithm, based on which in section \ref{sec:6} we then theoretically prove the exponential decay. In section \ref{sec:7} we describe our numerical simulations.  Conclusions plus some potential future work are discussed in section \ref{sec:8}.

\section{Preliminaries}\label{sec:2}

Define the Lagrange function of Problem (\ref{pro:1}) as
\begin{multline}\label{equ:1}
\mL(\bx,\bu,\blambda;\bd) = \sum_{k=0}^{N-1}g_k(\bx_k,\bu_k,\bd_k)+g_N(\bx_N)\\
+\sum_{k=0}^{N-1}\blambda_{k}^T(\bx_{k+1}-f_k(\bx_k,\bu_k,\bd_k)) 
+\blambda_{-1}^T(\bx_0-\bd_{-1}),
\end{multline}
where $\blambda = (\blambda_{-1}; \blambda_0; \ldots;\blambda_{N-1})$ is the Lagrange multiplier with $\blambda_k\in\mR^{n_x}$, $k\in {\red \{-1\}}\cup [N-1]$. 
An {\red immediate} but important observation {\red we justify below} is that any feasible point of Problem (\ref{pro:1}) satisfies the linear independence constraint qualification (LICQ). 
We now define several quantities of interest.

\begin{definition}\label{def:1}

We introduce the notations $\tilde{f}_k(\bz,\bd) = \bx_{k+1} - f_k(\bx_k,\bu_k,\bd_k), \forall k\in[N-1]$ and $\tilde{f}_{-1}(\bz,\bd) = \bx_0 - \bd_{-1}$. For any prespecified $\bd$, we define the Jacobian matrix as follows:
\begin{align*}
G(\bz;\bd) = \big(\nabla_{\bz}\tilde{f}_{-1}(\bz,\bd), \nabla_{\bz}\tilde{f}_{0}(\bz,\bd),\ldots, \nabla_{\bz}\tilde{f}_{N-1}(\bz,\bd)\big)^T\in\mR^{(N+1)n_x\times n_z}.	
\end{align*}
(Note that {\red $G(\bz;\bd)$ } has full row rank {\red because $\frac{\partial  \tilde{f}_k}{\partial x_{k+1}} = I_{n_x}, \forall -1 \leq k \leq N-1$ and  $\frac{\partial  \tilde{f}_k}{\partial x_{j}} = 0, \forall -1 \leq k < j-1 \leq N-1$; this ``staircase" structure ending with identity matrices on each row implies the full row rank property of $G(\bz;\bd)$ and, thus, LICQ holds.}) When $\bz$ is a local solution of Problem (\ref{pro:1}), the critical cone\footnote{The detailed definition is in \cite{Nocedal2006Numerical}. See equation (12.53).} at $\bz$ is the null space of $G(\bz; \bd)$, denoted as $\mC(\bz; \bd)$. We use $Z(\bz;\bd)\in\mR^{n_z\times Nn_u}$ to denote a full column rank matrix whose columns are orthonormal and span the space $\mC(\bz;\bd)$. We denote the Hessian matrix of Lagrangian (\ref{equ:1}) by $\forall k\in[N-1]$
\begin{align*}
H_k =& \begin{pmatrix*}
Q_k & S_k^T\\
S_k & R_k
\end{pmatrix*} = \begin{pmatrix*}
\nabla^2_{\bx_k}\mL & \nabla^2_{\bx_k\bu_k}\mL\\
\nabla^2_{\bu_k\bx_k}\mL & \nabla^2_{\bu_k}\mL
\end{pmatrix*},\text{\ \ } D_k = (D_{k1}, D_{k2}) = (\nabla_{\bd_k\bx_k}^2\mL, \nabla_{\bd_k\bu_k}^2\mL),
\end{align*}
and $H_N = Q_N = \nabla_{\bx_N}^2g_N(\bx_N)$. We let $H = \diag(H_0,...,H_N)$, $D = \diag(D_0,...,D_{N-1})$.

\end{definition}

We now set the stage for the sensitivity analysis. Suppose the unperturbed reference variable is $\bd^0$ and the unperturbed primal-dual optimal solution at $\bd = \bd^0$ is $(\bz^0, \blambda^0; \bd^0) = (\bx^0, \bu^0, \blambda^0; \bd^0)$. In what follows, all matrices from Definition \ref{def:1} are evaluated at this primal-dual solution, and their dependence on the primal-dual point is suppressed. We require the following second-order sufficient condition (SOSC).

\begin{assumption}[SOSC]\label{ass:1-2}
At $(\bx^0,\bu^0,\blambda^0; \bd^0)$, the reduced Hessian of Problem (\ref{pro:1}) is positive definite. That is,
\begin{align*}
Z^THZ\succ 0.
\end{align*}

\end{assumption}

We mention that Assumption \ref{ass:1-2} is standard in sensitivity analysis \cite{Bonnans2000Perturbation}. It implies that $\bz^0 = (\bx^0, \bu^0)$ is a strict local solution and guarantees that directional derivatives of the solution with respect to the parameter $\bd$ are well defined. Under SOSC, we have the following sensitivity analysis theorem.

\begin{theorem}[Sensitivity analysis of Problem (\ref{pro:1})]\label{thm:3}
Suppose we perturb $\bd$~along the path in (\ref{equ:5}) for Problem (\ref{pro:1}). Then, under Assumption \ref{ass:1-2}, the directional derivative $\bw = (\bp, \bq)$ of the solution $\big(\bx^*(\bd(\epsilon)),\bu^*(\bd(\epsilon))\big)$ for positive $\epsilon$ at $0$, defined in (\ref{equ:6}), exists and is the solution of the following quadratic program:
\begin{subequations}\label{equ:9}
\begin{align}
\Q\P:\text{\ \ \ }\min_{\bw}\text{\ \ }&\bw^TH\bw + 2\bl^TD\bw, \label{equ:9a}\\
\text{s.t.}\text{\ \ } &G\bw = \by. \label{equ:9b}
\end{align}
\end{subequations}
Here, we denote {\red $\by = (\by_{-1}; \by_{0}; \ldots; \by_{N-1})\in\mR^{(N+1)n_x}$ with $\by_{-1} = \bl_{-1}$ and $\by_k = (\nabla_{\bd_k}f_k)^T\bl_k$ for $ k\in[N-1]$.} The matrices $H$, $D$, $G$ are from Definition \ref{def:1} and evaluated at $(\bz^0, \blambda^0;\bd^0)$.
	
\end{theorem}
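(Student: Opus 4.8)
The plan is to obtain Theorem~\ref{thm:3} as an application of the classical sensitivity theory for parametrized nonlinear programs (e.g., \cite{Bonnans2000Perturbation}, \cite{Nocedal2006Numerical}) to Problem~(\ref{pro:1}), with the parameter being $\bd$ moving along the path (\ref{equ:5}). First I would collect the three structural ingredients already established or assumed in the excerpt: (i) LICQ holds at every feasible point, in particular at $\bz^0$, because of the ``staircase'' argument recorded in Definition~\ref{def:1}; (ii) the SOSC of Assumption~\ref{ass:1-2}, $Z^THZ\succ0$, holds at $(\bz^0,\blambda^0;\bd^0)$; and (iii) the data $g_k,f_k$ are $C^2$, so the KKT map is $C^1$ in $(\bz,\blambda,\bd)$. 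These are exactly the hypotheses of the implicit-function/directional-differentiability theorem for NLP: under LICQ$+$SOSC the primal-dual solution is a (locally) Lipschitz and directionally differentiable function of the parameter, and the directional derivative is characterized as the unique solution of the QP obtained by linearizing the KKT system.

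The key computational step is to write down that linearized KKT system explicitly for our problem and recognize it as the optimality system of $\Q\P$ in (\ref{equ:9}). Concretely, the KKT conditions of Problem~(\ref{pro:1}) are
\begin{align*}
\nabla_{\bz}\mL(\bz,\blambda;\bd) = 0,\qquad \tilde f_k(\bz,\bd) = 0,\quad k\in\{-1\}\cup[N-1],
\end{align*}
i.e., $\nabla_{\bz}\big(\sum_k g_k\big) - G(\bz;\bd)^T\blambda = 0$ together with $G$-type feasibility; note the constraint gradients form $G(\bz;\bd)^T$. Differentiating this system along $\bd(\epsilon)=\bd^0+\epsilon\bl+o(\epsilon)$ at $\epsilon=0$ and letting $(\bw,\bmu)$ be the derivatives of $(\bz,\blambda)$ gives, after using Definition~\ref{def:1}'s identifications $H=\nabla^2_{\bz}\mL$, $D=\nabla^2_{\bz\bd}\mL$ (block-diagonal by separability of $g_k,f_k$ across stages), and $G=\nabla_{\bz}\tilde f$,
\begin{align*}
H\bw + D_{\text{col}}\bl - G^T\bmu = 0,\qquad G\bw = \by,
\end{align*}
where the right-hand side $\by$ comes from differentiating $\tilde f_k(\bz,\bd)=0$: for $k\in[N-1]$, $\tilde f_k = \bx_{k+1}-f_k(\bx_k,\bu_k,\bd_k)$ contributes $\nabla_{\bz}\tilde f_k\cdot\bw - (\nabla_{\bd_k}f_k)^T\bl_k = 0$, hence $\by_k=(\nabla_{\bd_k}f_k)^T\bl_k$, while $\tilde f_{-1}=\bx_0-\bd_{-1}$ contributes $\by_{-1}=\bl_{-1}$; here the term $D_{\text{col}}\bl$ is precisely the vector whose $k$th block is $D_k(\bl_{\bx_k};\bl_{\bu_k})$-type coupling, which when contracted as $2\bl^TD\bw$ reproduces the cross term in (\ref{equ:9a}). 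But the displayed two equations are exactly the KKT conditions of the equality-constrained QP
\begin{align*}
\min_{\bw}\ \bw^TH\bw + 2\bl^TD\bw\quad\text{s.t.}\quad G\bw=\by,
\end{align*}
with $\bmu$ playing the role of its multiplier. Since $G$ has full row rank and $Z^THZ\succ0$, this QP has a unique solution, and it coincides with $\bw=(\bp,\bq)$.

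The main obstacle I anticipate is bookkeeping rather than conceptual: one must be careful that the QP in (\ref{equ:9}) is a \emph{minimization} of an indefinite quadratic whose well-posedness relies on $H$ being positive definite \emph{on the critical cone} $\mC(\bz^0;\bd^0)=\Range(Z)$ only, not globally; so the argument that $\bw$ is the \emph{unique minimizer} (not merely a stationary point) needs the standard reduced-space reduction — parametrize the feasible set as $\bw=\bw_p+Z\bv$ for a particular solution $\bw_p$ of $G\bw=\by$, so the objective becomes $\bv^T(Z^THZ)\bv + (\text{linear in }\bv) + \text{const}$, which is strictly convex by Assumption~\ref{ass:1-2}. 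A second point requiring care is matching the index conventions and the factor of $2$ between $\nabla^2_{\bz\bd}\mL$ and the term $2\bl^TD\bw$, and checking that the $\bd$-dependence enters only through $f_k$ in the constraints and through $g_k,f_k$ in $D$, with no $\bd$-dependence in $H$ or $G$ — all of which follows directly from (\ref{pro:1}) and (\ref{equ:1}). Finally, one should invoke the cited perturbation result (\cite{Bonnans2000Perturbation}, or Fiacco-type theory) to justify that the limits in (\ref{equ:6}) actually exist and equal this QP solution, closing the proof.
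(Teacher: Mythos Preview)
Your proposal is correct and follows the same approach as the paper: both invoke the classical parametric-NLP sensitivity theory of \cite{Bonnans2000Perturbation} (the paper cites Theorem~5.53 and Remark~5.55 there, noting LICQ implies Gollan's regularity condition), while you additionally spell out the differentiation of the KKT system and the reduced-space argument that the QP has a unique minimizer. The only caveat is minor bookkeeping: watch the sign on the multiplier term (the Lagrangian \eqref{equ:1} has $+\blambda_k^T\tilde f_k$, so $\nabla_{\bz}\mL=\nabla_{\bz}\sum g_k + G^T\blambda$, not $-G^T\blambda$), though this does not affect the identification of the linearized system with the KKT conditions of $\Q\P$.
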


\begin{proof}

See Theorem 5.53 and Remark 5.55 in \cite{Bonnans2000Perturbation}. Note that Problem (\ref{pro:1}) satisfies Gollan's regularity condition at $(\bz^0, \bd^0)$ in any direction (as  implied by LICQ). {\red See also \cite{Robinson1976First, Robinson1982Generalized, Robinson1987Local} and references therein for detailed introduction of sensitivity analysis of nonlinear programming.}
\end{proof}

Under Assumption \ref{ass:1-2}, the quadratic program in (\ref{equ:9}) has a unique global minimizer. Note that constraints (\ref{equ:9b}) come from linearizing (\ref{pro:1b})--(\ref{pro:1c}) whereas the objective (\ref{equ:9a}) is a quadratic approximation of the Lagrange function (\ref{equ:1}).  When $H$ is indefinite, solving \eqref{equ:9} in the context of Problem \eqref{pro:1}  becomes difficult. For example, Riccati-based approaches are not applicable since they all require positiveness of $H$. As shown in \cite{Verschueren2017Sparsity}, however, we can convexify (\ref{equ:9}), \textit{without changing its solution space,} an endeavor we aim to refine in this work.

\section{Convexification procedure with linear shifting}\label{sec:4}

We write Problem (\ref{equ:9}) in a way that explicitly connects it to \eqref{pro:1}. We use $A_k, B_k, C_k$ to denote $\nabla_{\bx_k}^Tf_k$, $\nabla_{\bu_k}^Tf_k$, $\nabla^T_{\bd_k}f_k$ evaluated at $(\bz^0; \bd^0)$. Problem (\ref{equ:9}) can then be written as
\begin{subequations}\label{equ:11}
\begin{align}
\min_{\bp,\bq}\text{\ \ } &\sum_{k=0}^{N-1} \begin{pmatrix*}
\bp_k\\
\bq_k\\
\bl_k
\end{pmatrix*}^T \begin{pmatrix*}
Q_k & S_k^T & D_{k1}^T\\
S_k & R_k & D_{k2}^T\\
D_{k1} & D_{k2} & 0
\end{pmatrix*}\begin{pmatrix*}
\bp_k\\
\bq_k\\
\bl_k
\end{pmatrix*} + \bp_N^T Q_N\bp_N, \label{equ:11a}\\
\text{s.t.}\text{\ \ } &\bp_{k+1} = A_k\bp_k+B_k\bq_k+C_k\bl_k,\text{\ \ \ \ } \forall k\in[N-1], \label{equ:11b}\\
&\bp_{0} = \bl_{-1}. \label{equ:11c}
\end{align}
\end{subequations}

If $C_k =0$ and $D_k = 0$, one can directly use the convexification procedure from~\cite{Verschueren2017Sparsity}. Since those terms are nonzero in our work, we need to slightly extend that procedure. To this end, we start by introducing the following uniform (in $N$) SOSC assumption.

\begin{assumption}[Uniform SOSC]\label{ass:2}
We assume there exists $\gamma>0$ independent from $N$ such that the reduced Hessian of Problem (\ref{pro:1}), at its unperturbed solution $(\bz^0, \blambda^0; \bd^0)$, satisfies
\begin{align}\label{equ:8}
Z^THZ\succeq \gamma I.
\end{align}

\end{assumption}

Comparing Assumption \ref{ass:2} with Assumption \ref{ass:1-2}, we see that Assumption \ref{ass:2} requires the reduced Hessian matrix to have a positive lower bound uniformly with horizon length $N$. While our convexification results will also hold under general SOSC, this stronger SOSC will indicate that sensitivity to perturbation decays at a rate independent of $N$, which offers a theoretical guarantee on sensitivity analysis of long-horizon nonconvex NLDP. We note that Assumption \ref{ass:2} holds for a wide range of dynamics because of the natural block structures in them. For example, the QDP studied in \cite{Xu2018Exponentially}, which assumed $H_k = \diag(Q_k, R_k)\succeq \gamma I$, satisfies (\ref{equ:8}). A detailed discussion is provided in the next remark, and a concrete example is tested in numerical experiments described in section \ref{sec:7}.

{\red We find it difficult to give a characterization of condition \eqref{equ:8} that would convincingly identify how likely it is to occur in practice (though for fixed $N$, being equivalent to SOSC, it is a natural requirement). We give an example of a problem class (in the sense of $N$ being allowed to vary) that exhibits this property. }

\begin{remark}

{\red An example that satisfies Assumption \ref{ass:2}.} Suppose $n_x = n_u = 1$, $A_k = B_k = 1$, $\forall k\in[N-1]$. By Definition~\ref{def:1}, we can formulate the Jacobian matrix $G\in\mR^{(N+1)\times (2N+1)}$, and one of matrices, $\tilde{Z}\in\mR^{(2N+1)\times N}$, that spans its null space can be written as
\[\tilde{Z}^T = \begin{pmatrix*}
0 & 1 & 1 & -1 \\
&&&1 & 1 & -1\\
&&&&&\ddots & \ddots & \ddots\\
&&&&&&&1 & 1 & -1 & 0\\
&&&&&&&&&1 & 1
\end{pmatrix*}.
\]
Note that $\tilde{Z}^T\tilde{Z}$ is a tridiagonal matrix with diagonal entry $3$ (the last one is $2$) and superdiagonal entry $-1$. Thus, by the Gershgorin circle theorem (see \cite{Bauer1960Norms}), we know $\tilde{Z}^T\tilde{Z}$ can be upper bounded uniformly with its dimension $N$. In particular, we have $I\preceq \tilde{Z}^T\tilde{Z}\preceq 4I$. Consider a simple Hessian matrix where $Q_k = a_k$, $R_k = b_k$, $S_k = 0$, for all stages $k$. Then one can easily  see that $\tilde{Z}^TH\tilde{Z}$ is also a tridiagonal matrix, with diagonal entry $b_{k-1}+b_k+a_k$ (the last one is $b_{N-1}+a_N$) and superdiagonal entry $-b_k$. By the Gershgorin circle theorem again, we see that as long as $a_k\geq 2|b_k| + 2|b_{k-1}| + 4\gamma$, $\forall k\in[1, N-1]$, and $a_N\geq 2|b_{N-1}| + 4\gamma$, we can get $\tilde{Z}^TH\tilde{Z}\succeq4\gamma I$, which further implies (\ref{equ:8}), as can be seen from
\begin{align*}
\tilde{Z}^TH\tilde{Z}\succeq 4\gamma I\Longrightarrow\tilde{Z}^TH\tilde{Z}\succeq\gamma\tilde{Z}^T\tilde{Z} =  \gamma Z_2^TZ_2 \Longrightarrow Z_1^THZ_1\succeq \gamma I,
\end{align*}
where $\tilde{Z} = Z_1Z_2$ is the QR decomposition. More generally, even if $n_x, n_u>1$ and $S_k\neq 0$,  $\tilde{Z}^TH\tilde{Z}$ still has a block tridiagonal structure, and Assumption \ref{ass:2} holds whenever $Q_k$ is sufficiently large. We  point out that $R_k$ can be indefinite or negative definite under this setup. Thus, the problem is still nonconvex.

\end{remark}

With Assumption \ref{ass:2}, we now present our convexification procedure. While the algebra is similar to the one in \cite{Verschueren2017Sparsity}, we describe it here for self-consistency of the presentation. 

\begin{definition}\label{def:3}

Given a sequence of matrices $\{\bar{Q}_k\}_{k=0}^{N}\in\mR^{n_x\times n_x}$, we define the quadratic cost sequence as $\bar{r}_k(\bx) = \bx^T\bar{Q}_k\bx$, $\forall k\in[N]$. Also, for $k\in[N-1]$ we denote the $k$th stage cost in objective (\ref{equ:11a}) as $L_k(\bp_k,\bq_k)$, {\red which is a quadratic plus linear function of $\bp_k, \bq_k$,} and $L_N(\bp_N) =  \bp_N^T Q_N\bp_N$.
\end{definition}

The following lemma will show how to recursively use constraints to modify the objective matrices while keeping the {\red primal solution} of the QDP unchanged.  

\begin{lemma}\label{lem:3}

Given any sequence of matrices $\{\bar{Q}_k\}_{k=0}^N\in\mR^{n_x\times n_x}$, we define the transformed cost function as $\bar{L}_N(\bp_N) = L_N(\bp_N) - \bar{r}_N(\bp_N)$ and $\forall k\in[N-1]$
\begin{align*}
\bar{L}_k(\bp_k,\bq_k) = L_k(\bp_k,\bq_k) -\bar{r}_k(\bp_k) + \bar{r}_{k+1}(A_k\bp_k+B_k\bq_k+C_k\bl_k).
\end{align*}
Then replacing the objective $L_k$ in (\ref{equ:11a}) with $\bar{L}_k$ will result in the same solution.

\end{lemma}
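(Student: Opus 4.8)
### Proof proposal

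The plan is to show that for \emph{every feasible point} $(\bp,\bq)$ of Problem (\ref{equ:11}) — i.e., every $(\bp,\bq)$ satisfying the dynamics (\ref{equ:11b}) and the initial condition (\ref{equ:11c}) — the total objective built from the $\bar L_k$ equals the total objective built from the $L_k$ up to an additive constant that does not depend on $(\bp,\bq)$. Once this is established, the two programs have identical feasible sets and objectives differing by a constant, so they share the same set of minimizers; in particular, under Assumption \ref{ass:1-2} (which gives a unique minimizer) the unique primal solution is unchanged. This is the natural "telescoping" argument, and the only subtlety is bookkeeping the boundary terms.

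The key computation is to sum the definition of $\bar L_k$ over $k\in[N]$. Writing $S = \sum_{k=0}^{N-1}\bar L_k(\bp_k,\bq_k) + \bar L_N(\bp_N)$, I substitute the definitions to get
\begin{align*}
S &= \sum_{k=0}^{N-1} L_k(\bp_k,\bq_k) + L_N(\bp_N) - \sum_{k=0}^{N}\bar r_k(\bp_k) + \sum_{k=0}^{N-1}\bar r_{k+1}(A_k\bp_k+B_k\bq_k+C_k\bl_k).
\end{align*}
Now I invoke feasibility: for a feasible point, $A_k\bp_k+B_k\bq_k+C_k\bl_k = \bp_{k+1}$ by (\ref{equ:11b}), so the last sum becomes $\sum_{k=0}^{N-1}\bar r_{k+1}(\bp_{k+1}) = \sum_{k=1}^{N}\bar r_k(\bp_k)$. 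Subtracting this from $\sum_{k=0}^{N}\bar r_k(\bp_k)$ leaves only the $k=0$ term, $\bar r_0(\bp_0)$. Hence
\begin{align*}
S = \sum_{k=0}^{N-1} L_k(\bp_k,\bq_k) + L_N(\bp_N) - \bar r_0(\bp_0),
\end{align*}
and by (\ref{equ:11c}) we have $\bp_0 = \bl_{-1}$, so $\bar r_0(\bp_0) = \bl_{-1}^T\bar Q_0\bl_{-1}$ is a constant independent of $(\bp,\bq)$. This proves the objectives differ by a fixed constant on the feasible set.

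I do not expect a genuine obstacle here; the argument is elementary. The main point requiring care is simply that the telescoping identity holds only \emph{on the feasible set}, since it uses (\ref{equ:11b})–(\ref{equ:11c}) — off the feasible set the two objectives genuinely differ, which is exactly why the constraints get "absorbed" into the Hessian. I would state the conclusion as: since the feasible sets of the original and transformed problems coincide and their objectives agree up to the additive constant $\bl_{-1}^T\bar Q_0\bl_{-1}$, their minimizers coincide; in particular the (unique, by Assumption \ref{ass:1-2}) primal solution $(\bp,\bq)$ is the same. One should also remark that nothing was assumed about the $\bar Q_k$ — the lemma holds for an arbitrary sequence — which is what makes it usable later for choosing the $\bar Q_k$ via a Riccati-type recursion that renders each transformed stage Hessian positive definite.
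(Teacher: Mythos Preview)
Your proposal is correct and follows essentially the same approach as the paper: both arguments sum the transformed stage costs, use the dynamics (\ref{equ:11b}) to telescope the $\bar r_k$ terms, and then invoke (\ref{equ:11c}) to conclude that the two objectives differ on the feasible set by the constant $\bar r_0(\bl_{-1})$. Your additional remarks about uniqueness under Assumption~\ref{ass:1-2} and about the identity holding only on the feasible set are accurate elaborations but not substantively different from the paper's proof.
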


\begin{proof}

The argument can be derived from the following observation:
\begin{align*}
\sum_{k=0}^{N-1}\bar{L}_k(\bp_k,& \bq_k) + \bar{L}_N(\bp_N) \\
& =  \sum_{k=0}^{N-1}\bigg(L_k(\bp_k, \bq_k) -\bar{r}_k(\bp_k) + \bar{r}_{k+1}(A_k\bp_k+B_k\bq_k+C_k\bl_k) \bigg)+ \bar{L}_N(\bp_N)\\
& =  \sum_{k=0}^{N-1}\bigg(L_k(\bp_k, \bq_k) -\bar{r}_k(\bp_k) + \bar{r}_{k+1}(\bp_{k+1}) \bigg)+ \bar{L}_N(\bp_N)\\
& =  \sum_{k=0}^{N-1}L_k(\bp_k, \bq_k) -\bar{r}_0(\bp_0) + \bar{r}_{N}(\bp_{N})+ \bar{L}_N(\bp_N)\\ 
& =  \sum_{k=0}^{N-1}L_k(\bp_k, \bq_k)+ L_N(\bp_N) -\bar{r}_0(\bl_{-1}),
\end{align*}
where the second and the last equalities are due to (\ref{equ:11b}) and (\ref{equ:11c}), respectively. Since the objective after the replacement stays the same, the conclusion follows. 
\end{proof}

Let us focus on the quadratic matrix in $\bar{L}_k(\bp_k,\bq_k)$. It can be written as

\begin{align*}
\begin{pmatrix}
\tilde{Q}_k & \tilde{S}^T_k & \tilde{D}_{k1}^T\\
\tilde{S}_k & \tilde{R}_k 
& \tilde{D}_{k2}^T\\
\tilde{D}_{k1} & \tilde{D}_{k2} & *
\end{pmatrix} \coloneqq \begin{pmatrix}
Q_k-\bar{Q}_k& S_k^T & D_{k1}^T\\
S_k & R_k & D_{k2}^T\\
D_{k1} & D_{k2} & 0
\end{pmatrix}+\begin{pmatrix}
A_k^T\\
B_k^T\\
C_k^T
\end{pmatrix}\bar{Q}_{k+1}\begin{pmatrix}
A_k & B_k & C_k
\end{pmatrix}.
\end{align*}
We use ``$*$'' to denote the quadratic matrix for $\bl_k$, which can be seen as constant. Now, the problem of obtaining a convexification transformation translates to finding $\{\bar{Q}_k\}_{k=0}^N$ such that {\red the corner matrix $\tilde{H}_k \coloneqq \begin{pmatrix}
\tilde{Q}_k & \tilde{S}^T_k \\
\tilde{S}_k & \tilde{R}_k 
\end{pmatrix}$} 
is positive definite. Our convexification procedure is shown in Algorithm \ref{alg:1}.

\begin{algorithm}[!h]
	\caption{Convexification Procedure with Linear Shifting}
	\label{alg:1}
	\begin{algorithmic}[1]
		\STATE \textbf{Input:} $\{H_k\}_{k=0}^N$, $\{A_k, B_k, C_k, D_k\}_{k = 0}^{N-1}$, a scalar $\delta$;
		\STATE $\tilde{H}_N = \tilde{Q}_N = \delta I$;
		\STATE $\bar{Q}_N = Q_N-\tilde{Q}_N$;
		\FOR {$k = N-1, \ldots, 0$}
		\STATE $\begin{pmatrix}
		\hat{Q}_k & \tilde{S}_k^T & \tilde{D}_{k1}^T\\
		\tilde{S}_k & \tilde{R}_k & \tilde{D}_{k2}^T\\
		\tilde{D}_{k1} & \tilde{D}_{k2} & *
		\end{pmatrix} = \begin{pmatrix}
		Q_k & S_k^T & D_{k1}^T\\
		S_k & R_k & D_{k2}^T \\
		D_{k1} & D_{k2} & 0
		\end{pmatrix} + \begin{pmatrix}
		A_k^T\\
		B_k^T\\
		C_k^T
		\end{pmatrix}\bar{Q}_{k+1}\begin{pmatrix}
		A_k & B_k & C_k
		\end{pmatrix}$;
		\STATE $\tilde{Q}_k = \tilde{S}_k^T\tilde{R}_k^{-1}\tilde{S}_k+\delta I$;
		\STATE $\tilde{H}_k = \begin{pmatrix}
		\tilde{Q}_k & \tilde{S}_k^T\\
		\tilde{S}_k & \tilde{R}_k
		\end{pmatrix}$;
		\STATE $\bar{Q}_k = \hat{Q}_k - \tilde{Q}_k$;
		\ENDFOR
		\STATE \textbf{Output:} $\{\tilde{H}_k\}_{k = 0}^N$, $\{\tilde{D}_k\}_{k = 0}^{N-1}$.
	\end{algorithmic}
\end{algorithm}

This algorithm is an easy extension of \cite{Verschueren2017Sparsity}, and some of its features (e.g., preserving sparsity structure) have been described in \cite{Verschueren2017Sparsity}. We will use $\tilde{H}(\delta)$ and $\tilde{D}(\delta)$ to denote the output $\tilde{H},\tilde{D}$ of the algorithm when choosing a specific $\delta$.

From Lemma \ref{lem:3} we know that replacing $H_k, D_k$ in (\ref{equ:11a}) with $\tilde{H}_k(\delta), \tilde{D}_k(\delta)$ for any $\delta$ will result in the same optimal solution; we subsequently aim to choose $\delta$ so that $\tilde{H}_k(\delta)$ is uniformly positive definite. In \cite{Verschueren2017Sparsity} such a $\delta$ was proved to exist but not otherwise estimated in terms of other elements of the problem. Because of the continuity argument used, it cannot be directly assessed whether $\tilde{H}_k(\delta)$ would grow or degrade its positive definiteness should $N$ be increased. 
Since $\delta$ is of special interest in our paper, we propose a finer analysis of it by combining Algorithm \ref{alg:1} with a Riccati recursion. Further, as we later show, the uniform boundedness of $\tilde{H}_k(\delta), \tilde{D}_k(\delta)$ with feasible $\delta$ is implied by a controllability condition. In the remainder of this section, we first determine a \textit{sufficient} interval for $\delta$. Here \textit{sufficient} means ensuring the positive definiteness (and thus, invertibility) of $\tilde{R}(\delta)$ and $\tilde{H}_k(\delta)$; so that the recursion in Algorithm \ref{alg:1} is well defined and the equivalent QDP is strongly convex.

The following two lemmas show the relationship between the convexification algorithm and the solution of Problem (\ref{equ:11}).

\begin{lemma}[Cost-to-go function for QDP with linear shifting]\label{lem:4} 	
For Problem~(\ref{equ:11}), let $K_N = Q_N$ {\red be the terminal cost-to-go matrix}. For any $k\in[N-1]$, we define $W_k = R_k+B_k^TK_{k+1}B_k$, {\red $k$th cost-to-go matrix} 
\begin{align}\label{equ:lem4:1}
K_k = Q_k+A_k^TK_{k+1}A_k - (B_k^TK_{k+1}A_k+S_k)^TW_k^{-1}(B_k^TK_{k+1}A_k+S_k),
\end{align}
{\red feedback matrix} $P_k = -W_k^{-1}(B_k^TK_{k+1}A_k+S_k)$, and {\red closed-loop state transition matrix} $E_k = A_k+B_kP_k$. Further, $\forall i, k\in[N-1]$, we let $V_i^k  = -K_{i+1}\prod_{j=k}^{i}E_j$ and $M_i^k = -(D_{i1}+D_{i2}P_i)\prod_{j = k}^{i-1}E_j$. Then under Assumption~\ref{ass:1-2}, we have the following:
\begin{enumerate}[label=(\roman*),topsep=-13pt,leftmargin=15pt]
\setlength\itemsep{-0.1em}
\item $W_k\succ 0$, $\forall k\in[N-1]$;
\item The optimal control variable at stage $k$ for $k\in[N-1]$ is
\end{enumerate}
\begin{multline}\label{equ:12}
\noindent\bq_k(\bp_k) = P_k\bp_k + W_k^{-1}B_k^T\sum_{i = k+1}^{N-1} (M_i^{k+1})^T\bl_i + W_k^{-1}B_k^T\sum_{i=k+1}^{N-1}(V_{i}^{k+1})^TC_i\bl_i\\
- W_k^{-1}B_k^TK_{k+1}C_k\bl_k-W_k^{-1}D_{k2}^T\bl_k;
\end{multline}
\begin{enumerate}[label=and (\roman*),topsep=-5pt,leftmargin=40pt,]
\setlength\itemsep{-0.1em}
\setcounter{enumi}{2}
\item the cost-to-go function is
\end{enumerate}
\begin{align}\label{equ:13}
J_k(\bp_k)& = \bp_k K_k\bp_k -2\sum_{i = k}^{N-1}\bl_i^TM_i^k\bp_k - 2\sum_{i = k}^{N-1}\bl_i^TC_i^TV_i^k\bp_k + T_k, \ \ \ \forall k\in[N],
\end{align}
where $T_N = 0$ and $T_k$, depending on $\{\bl_i\}_{i\geq k}$, satisfies $T_k = 0$ if $\bl_i = 0,\; \forall k\leq i\leq N-1$.

\end{lemma}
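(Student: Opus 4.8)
The plan is to prove Lemma~\ref{lem:4} by backward induction on $k$, running from $k=N$ down to $k=0$, in the standard dynamic-programming fashion. The statement bundles three conclusions --- positive definiteness of $W_k$, the closed-form optimal control $\bq_k(\bp_k)$, and the cost-to-go function $J_k(\bp_k)$ --- and I would carry all three simultaneously through a single induction, since each step of the recursion needs the previous $J_{k+1}$ (in particular its Hessian $K_{k+1}$ and its linear coefficients) in order to set up and solve the stagewise minimization.

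\textbf{Base case.} At $k=N$ set $J_N(\bp_N) = \bp_N^T Q_N \bp_N$, so $K_N = Q_N$, all linear terms vanish, and $T_N = 0$; there is nothing to check here beyond matching notation.

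\textbf{Inductive step.} Assume the formula \eqref{equ:13} holds at stage $k+1$. Plug the dynamics $\bp_{k+1} = A_k\bp_k + B_k\bq_k + C_k\bl_k$ into $J_{k+1}$, add the stage cost $L_k(\bp_k,\bq_k)$ (the full quadratic-plus-linear form with blocks $Q_k, S_k, R_k, D_{k1}, D_{k2}$), and obtain the total cost-to-go at stage $k$ as a quadratic-plus-linear function of $(\bp_k,\bq_k)$. The coefficient of the pure-$\bq_k$ quadratic term is exactly $W_k = R_k + B_k^T K_{k+1} B_k$. The first issue is therefore to establish $W_k \succ 0$: this does \emph{not} follow from $R_k \succ 0$ (which is false in general here, as the remark stresses), so I would invoke Assumption~\ref{ass:1-2} and the fact that, for the unconstrained-in-$\bq$ subproblem, $W_k \succ 0$ is precisely the statement that the reduced Hessian block coupling the controls at stage $k$ is positive definite; concretely one argues that if some $W_k$ were singular or indefinite, one could build a direction in the critical cone $\mC(\bz;\bd) = \Range(Z)$ on which $Z^T H Z$ fails to be positive definite, contradicting SOSC. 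This is the cleanest way to get item (i) for \emph{all} $k$ at once, rather than trying to propagate positivity of $K_k$ (which need not hold). Given $W_k \succ 0$, the inner minimization over $\bq_k$ is an unconstrained strictly convex quadratic, so the minimizer is the unique stationary point: differentiate, solve, and collect terms. The $\bp_k$-linear part of the minimizer gives the feedback term $P_k\bp_k$ with $P_k = -W_k^{-1}(B_k^T K_{k+1} A_k + S_k)$; the $\bl$-dependent part, after substituting the inductive linear coefficients $M_i^{k+1}$ and $V_i^{k+1}$ of $J_{k+1}$ and bookkeeping the telescoping products $\prod_j E_j$, must be shown to equal the remaining four terms of \eqref{equ:12}. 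Substituting $\bq_k = \bq_k(\bp_k)$ back into the total cost and simplifying yields $J_k$; matching the quadratic coefficient gives the Riccati recursion \eqref{equ:lem4:1} for $K_k$ (using $P_k$ and the Schur-complement identity $Q_k + A_k^T K_{k+1} A_k - (\cdot)^T W_k^{-1}(\cdot)$), matching the $\bp_k$-linear coefficient gives the asserted form with $M_i^k = -(D_{i1}+D_{i2}P_i)\prod_{j=k}^{i-1}E_j$ and $V_i^k = -K_{i+1}\prod_{j=k}^{i}E_j$, and the purely $\bl$-dependent leftover defines $T_k$, which is manifestly zero when all $\bl_i = 0$ for $i \ge k$ since every surviving term carries at least one factor $\bl_i$.

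\textbf{Where the work concentrates.} The conceptual obstacle is item (i): justifying $W_k \succ 0$ from SOSC rather than from a (false) direct positivity assumption on the Hessian blocks --- this needs a careful identification of a suitable subspace of the critical cone on which the reduced Hessian restricts to something built from $W_k$. The remaining bulk is the algebraic obstacle: verifying that the $\bl$-linear bookkeeping closes up, i.e.\ that the telescoping definitions $M_i^k, V_i^k$ and the index shifts in the sums in \eqref{equ:12} and \eqref{equ:13} are mutually consistent under one step of the recursion. That part is routine but error-prone; I would organize it by separating the $i = k$ contribution (which produces the explicit $-W_k^{-1}B_k^T K_{k+1}C_k\bl_k - W_k^{-1}D_{k2}^T\bl_k$ terms and the $C_i^T V_i^k$, $M_i^k$ terms at the new index) from the $i \ge k+1$ contributions (which inherit the old coefficients multiplied by the new transition factor $E_k$), and check the two bundles against the claimed formulas term by term.
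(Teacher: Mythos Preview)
Your proposal is correct and follows essentially the same backward-induction/Bellman-recursion approach as the paper. The only noteworthy difference is in how you justify $W_k\succ 0$: the paper argues more directly that SOSC forces Problem~(\ref{equ:11}) to have a unique minimizer, hence the stagewise quadratic in $\bq_k$ (whose Hessian is $W_k$) must have a unique minimizer, so $W_k\succ 0$ --- this avoids explicitly building the critical-cone direction you describe, though your construction (zero up to stage $k$, $\bq_k=\bv$, then optimal feedback for the $\bl=0$ problem, giving quadratic value $\bv^T W_k\bv$) is equally valid and is essentially the contrapositive of the same fact.
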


\begin{proof}

We will use reverse induction to prove property (iii) and incidentally~prove (i) and (ii) together. We know $J_N(\bp_N) = \bp_N^TQ_N\bp_N = \bp_N^TK_N\bp_N$. Suppose $J_{k+1}(\bp_{k+1})$ satisfies equation (\ref{equ:13}). At stage $k$, by definition of the cost-to-go function, we have
\begin{align}\label{pequ:3}
J_k(\bp_k)= &\min_{\bq_k}\bigg\{ L_k(\bp_k,\bq_k) + J_{k+1}(A_k\bp_k+B_k\bq_k+C_k\bl_k)\bigg\}\nonumber\\
= &\bp_k^TQ_k\bp_k + 2\bl_k^TD_{k1}\bp_k + \min_{\bq_k}\bigg\{\bq_k^TR_k\bq_k + 2\bp_k^TS_k^T\bq_k + 2\bl_k^TD_{k2}\bq_k \\
&+ J_{k+1}(A_k\bp_k+B_k\bq_k+C_k\bl_k)\bigg\}. \nonumber
\end{align}
Under Assumption \ref{ass:1-2}, we know Problem (\ref{equ:11}) has a unique solution. So the minimization in (\ref{pequ:3}) has a unique minimizer, whereas the quadratic matrix of its objective is $W_k$. Thus, we must have $W_k\succ 0$ (proving the induction step for (i)). In particular, for (\ref{pequ:3}) we plug in the formula of $J_{k+1}(\cdot)$, shown in (\ref{equ:13}) at $k+1$, and have
\begin{align}
J_k(\bp_k) = & \bp_k^TQ_k\bp_k + (A_k\bp_k+C_k\bl_k)^TK_{k+1}(A_k\bp_k+C_k\bl_k)+2\bl_k^TD_{k1}\bp_k \nonumber\\
&-2\sum_{i=k+1}^{N-1}\bl_i^TM_i^{k+1}(A_k\bp_k+C_k\bl_k)-2\sum_{i=k+1}^{N-1}\bl_i^TC_i^TV_i^{k+1}(A_k\bp_k+C_k\bl_k)+T_{k+1} \nonumber\\
&+ \min_{\bq_k}\bigg\{\bq_k^TW_k\bq_k + 2\bp_k^TS_k^T\bq_k + 2\bl_k^TD_{k2}\bq_k + 2(A_k\bp_k+C_k\bl_k)^TK_{k+1}B_k\bq_k \nonumber\\
&- 2\sum_{i = k+1}^{N-1}\bl_i^TM_{i}^{k+1}B_k\bq_k-2\sum_{i=k+1}^{N-1}\bl_i^TC_i^TV_i^{k+1}B_k\bq_k\bigg\} \nonumber\\
= &\min_{\bq_k}\bigg\{\bq_k^TW_k\bq_k + 2 \ba_k^T\bq_k\bigg\} + \bp_k^T(Q_k+A_k^TK_{k+1}A_k)\bp_k + 2\bb_k^T\bp_k \label{pequ:4}\\
&+ \underbrace{T_{k+1} + \bl_k^TC_k^TK_{k+1}C_k\bl_k - 2\sum_{i=k+1}^{N-1}\bl_i^TM_i^{k+1}C_k\bl_k-2\sum_{i=k+1}^{N-1}\bl_i^TC_i^TV_i^{k+1}C_k\bl_k}_{T_k'} \nonumber,
\end{align}
where $\bb_k = (A_k^TK_{k+1}C_k+D_{k1}^T)\bl_k - \sum_{i=k+1}^{N-1}A_k^T\big((M_i^{k+1})^T+(V_i^{k+1})^TC_i\big)\bl_i$ and
\begin{multline*}
\ba_k = S_k\bp_k + D_{k2}^T\bl_k+B_k^TK_{k+1}(A_k\bp_k+C_k\bl_k)\\
-\sum_{i=k+1}^{N-1}B_k^T(M_i^{k+1})^T\bl_i-\sum_{i=k+1}^{N-1}B_k^T(V_i^{k+1})^TC_i\bl_i.
\end{multline*}
From the definition of $P_k$, the unique solution $\bq_k$ of (\ref{pequ:4}), at fixed $\bp_k$, is then given~by
\begin{align}\label{pequ:5}
\bq_k(\bp_k) =& -W_k^{-1}\ba_k \nonumber\\
=& P_k\bp_k + W_k^{-1}B_k^T\sum_{i = k+1}^{N-1}(M_i^{k+1})^T\bl_i + W_k^{-1}B_k^T\sum_{i = k+1}^{N-1}(V_i^{k+1})^TC_i\bl_i\\
&-W_k^{-1}(D_{k2}+C_k^TK_{k+1}B_k)^T\bl_k. \nonumber
\end{align}
This verifies the induction step for claim (ii). Plugging (\ref{pequ:5}) into equation (\ref{pequ:4}) and noting that $M_i^{k+1}E_k =  M_i^k$, $V_i^{k+1}E_k =  V_i^k$ and the definitions in (\ref{equ:lem4:1}), by some extended but straightforward calculations we get 
\begin{align}\label{pequ:6}
J_k(\bp_k) = &\bp_k^TK_{k}\bp_k + 2\bigg(\bl_k^T(D_{k1}+D_{k2}P_k)+\bl_k^TC_k^TK_{k+1}E_k -\sum_{i=k+1}^{N-1}\bl_i^TM_i^{k}\nonumber \\ & -\sum_{i=k+1}^{N-1}\bl_i^TC_i^TV_i^{k}\bigg)\bp_k+T_k\nonumber\\
= & \bp_k^TK_{k}\bp_k - 2 \sum_{i=k}^{N-1}\bl_i^TM_i^{k}\bp_k-2\sum_{i=k}^{N-1}\bl_i^TC_i^TV_i^{k}\bp_k+T_k.
\end{align}
Here, $T_k = T_k' - \big\|(D_{k2}^T+B_k^TK_{k+1}C_k)\bl_k-\sum_{i=k+1}^{N-1}B_k^T\big(M_i^{k+1} + C_i^TV_i^{k+1}\big)^T\bl_i\big\|_{W_k^{-1}}^2$. We see that  $T_k$  depends only on $\{\bl_i\}_{i=k}^{N-1}$; and if $\bl_i=0, \forall k\leq i\leq N-1$, we will have that $T_k = 0$. So we have proved claim (iii) and, thus, the statement. 
\end{proof}

Lemma \ref{lem:4} gives the explicit form for the optimal control variable $\bq$ for Problem~(\ref{equ:11}). This lemma will also be  useful in the next section when we analyze the optimal state variable. The next lemma shows how the convexification transformation relates to the cost-to-go matrices defined in the preceding lemma.

\begin{lemma}\label{lem:5}
When applying the convexification algorithm \ref{alg:1} with $\delta=0$, we will have $\tilde{R}_k(0) = W_k$, $\forall k\in[N-1]$, and $\bar{Q}_k(0) = K_k$, $\forall k\in[N]$.
\end{lemma}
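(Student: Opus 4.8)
The plan is to prove the two claimed identities by reverse induction on $k$, exploiting the fact that Algorithm~\ref{alg:1} with $\delta=0$ is essentially a rewriting of the Riccati recursion \eqref{equ:lem4:1}. First I would set up the induction on the statement ``$\bar{Q}_k(0)=K_k$ for all $k$ and $\tilde{R}_k(0)=W_k$ for all $k\le N-1$.'' The base case is immediate: at the terminal stage Algorithm~\ref{alg:1} sets $\tilde{Q}_N = \delta I$, so with $\delta=0$ we have $\tilde{Q}_N(0)=0$ and hence $\bar{Q}_N(0)=Q_N-\tilde{Q}_N(0)=Q_N=K_N$, matching the definition $K_N=Q_N$ from Lemma~\ref{lem:4}.

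For the inductive step, suppose $\bar{Q}_{k+1}(0)=K_{k+1}$. I would substitute this into Line~5 of Algorithm~\ref{alg:1}. The $(\bu_k,\bu_k)$ block gives $\tilde{R}_k(0)=R_k+B_k^T\bar{Q}_{k+1}(0)B_k = R_k+B_k^TK_{k+1}B_k = W_k$, which is the first claim at stage $k$ (and $W_k\succ0$ by Lemma~\ref{lem:4}(i), so the inverse below is legitimate). The $(\bx_k,\bu_k)$ block gives $\tilde{S}_k(0)=S_k+B_k^TK_{k+1}A_k$ — precisely $-W_kP_k$ in the notation of Lemma~\ref{lem:4} — and the $(\bx_k,\bx_k)$ block gives $\hat{Q}_k(0)=Q_k+A_k^TK_{k+1}A_k$. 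Now Line~6 reads $\tilde{Q}_k(0)=\tilde{S}_k(0)^T\tilde{R}_k(0)^{-1}\tilde{S}_k(0)=(B_k^TK_{k+1}A_k+S_k)^TW_k^{-1}(B_k^TK_{k+1}A_k+S_k)$, and Line~8 then yields
\begin{align*}
\bar{Q}_k(0) &= \hat{Q}_k(0)-\tilde{Q}_k(0)\\
&= Q_k+A_k^TK_{k+1}A_k-(B_k^TK_{k+1}A_k+S_k)^TW_k^{-1}(B_k^TK_{k+1}A_k+S_k),
\end{align*}
which is exactly the right-hand side of \eqref{equ:lem4:1}, i.e. $\bar{Q}_k(0)=K_k$. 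This closes the induction and proves both identities.

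I do not anticipate a serious obstacle here; the lemma is deliberately a bookkeeping bridge between the two recursions. The only points requiring care are purely notational: making sure the block labelled $\hat{Q}_k$ in Line~5 (as opposed to $\tilde{Q}_k$) is correctly identified with $Q_k+A_k^TK_{k+1}A_k$, and tracking the sign conventions so that $\tilde{S}_k$ matches $B_k^TK_{k+1}A_k+S_k$ rather than its negative — either way the symmetric product $\tilde{S}_k^T\tilde{R}_k^{-1}\tilde{S}_k$ is unchanged, so the conclusion is robust to that choice. I would also remark that the well-definedness of the recursion at $\delta=0$ (invertibility of each $\tilde{R}_k(0)=W_k$) is not assumed but inherited from Lemma~\ref{lem:4}(i), which holds under Assumption~\ref{ass:1-2}.
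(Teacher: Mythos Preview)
Your proposal is correct and follows essentially the same reverse induction as the paper's own proof: establish $\bar{Q}_N(0)=Q_N=K_N$ at the terminal stage, then use the induction hypothesis $\bar{Q}_{k+1}(0)=K_{k+1}$ to read off $\tilde{R}_k(0)=W_k$, $\tilde{S}_k(0)=S_k+B_k^TK_{k+1}A_k$, $\hat{Q}_k(0)=Q_k+A_k^TK_{k+1}A_k$, and hence $\bar{Q}_k(0)=K_k$ via \eqref{equ:lem4:1}. Your additional remark that invertibility of $\tilde{R}_k(0)$ is guaranteed by Lemma~\ref{lem:4}(i) under Assumption~\ref{ass:1-2} is a useful observation that the paper defers to Corollary~\ref{cor:1}.
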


\begin{proof}

We use reverse induction for $\bar{Q}_k(0)$, and $\tilde{R}_k(0)$ can be proved in concert. For the last stage $N$, we know that $\bar{Q}_N(0) = Q_N = K_N$. Suppose $\bar{Q}_{k+1}(0) = K_{k+1}$. Then using the definition of $W_k$ and $K_k$ in (\ref{equ:lem4:1}), we can get
\begin{align*}
\tilde{R}_k(0) =& R_k + B_k^T\bar{Q}_{k+1}(0)B_k = R_k + B_k^TK_{k+1}B_k = W_k,\\
\bar{Q}_k(0) = &\hat{Q}_k(0)-\tilde{Q}_k(0) = Q_k+A_k^T\bar{Q}_{k+1}(0)A_k - \tilde{Q}_k(0)\\
= & Q_k+A_k^T\bar{Q}_{k+1}(0)A_k - \tilde{S}_k^T(0)\tilde{R}_k^{-1}(0)\tilde{S}_k(0)\\
= & Q_k+A_k^TK_{k+1}A_k - (B_k^TK_{k+1}A_k+S_k)^TW_k^{-1}(B_k^TK_{k+1}A_k+S_k)\\
=&K_k,
\end{align*}
where the second equality from the end is due to the induction assumption and the last equality is due to (\ref{equ:lem4:1}). This completes the proof.
\end{proof}

\begin{corollary}\label{cor:1}
Under Assumption \ref{ass:1-2}, Algorithm \ref{alg:1} can be carried out successfully by setting $\delta = 0$. The output satisfies $\tilde{R}_k(0)\succ0$ and $\tilde{H}_k(0)\succeq0$.
\end{corollary}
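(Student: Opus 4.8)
The plan is to show that with $\delta=0$ the recursion in Algorithm \ref{alg:1} is exactly the Riccati recursion of Lemma \ref{lem:4}, so that well-definedness and the semidefiniteness claims follow immediately from what was already established there. First I would invoke Lemma \ref{lem:5}, which identifies the algorithm's iterates with the cost-to-go quantities: $\bar{Q}_k(0) = K_k$ for all $k\in[N]$ and $\tilde{R}_k(0) = W_k$ for all $k\in[N-1]$. The only point that needs care is that Lemma \ref{lem:5} tacitly assumes the recursion does not break down, i.e.\ that each $\tilde{R}_k(0)$ encountered is invertible so that step 6 of the algorithm ($\tilde{Q}_k = \tilde{S}_k^T\tilde{R}_k^{-1}\tilde{S}_k + \delta I$) makes sense; so I would run the two inductions (the one in Lemma \ref{lem:5} and the positivity statement) simultaneously in reverse order from $k=N$ down to $k=0$.

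Concretely, the reverse induction goes as follows. At the terminal stage, $\tilde{H}_N(0) = \tilde{Q}_N(0) = 0\cdot I \succeq 0$ and $\bar{Q}_N(0) = Q_N = K_N$, so the base case holds. For the inductive step, assume $\bar{Q}_{k+1}(0) = K_{k+1}$. Then step 5 of the algorithm produces $\tilde{R}_k(0) = R_k + B_k^T\bar{Q}_{k+1}(0)B_k = R_k + B_k^TK_{k+1}B_k = W_k$, and by Lemma \ref{lem:4}(i), which holds under Assumption \ref{ass:1-2}, we have $W_k\succ 0$; hence $\tilde{R}_k(0)\succ 0$ and step 6 is well defined. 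This legitimizes the computation of $\tilde{Q}_k(0) = \tilde{S}_k^T(0)\tilde{R}_k^{-1}(0)\tilde{S}_k(0)$, and the algebra in Lemma \ref{lem:5} then gives $\bar{Q}_k(0) = \hat{Q}_k(0) - \tilde{Q}_k(0) = K_k$, closing the induction for the cost-to-go identity. Finally, for the semidefiniteness of $\tilde H_k(0)$: since $\tilde{Q}_k(0) = \tilde{S}_k^T(0)\tilde{R}_k^{-1}(0)\tilde{S}_k(0)$ with $\tilde{R}_k(0) = W_k\succ 0$, the Schur complement of the block $\tilde{R}_k(0)$ in
\begin{align*}
\tilde{H}_k(0) = \begin{pmatrix} \tilde{Q}_k(0) & \tilde{S}_k^T(0)\\ \tilde{S}_k(0) & \tilde{R}_k(0)\end{pmatrix}
\end{align*}
is $\tilde{Q}_k(0) - \tilde{S}_k^T(0)\tilde{R}_k^{-1}(0)\tilde{S}_k(0) = 0 \succeq 0$, and $\tilde{R}_k(0)\succ 0$, so $\tilde{H}_k(0)\succeq 0$ by the standard Schur complement criterion for positive semidefiniteness.

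I do not anticipate a serious obstacle here; the statement is essentially a corollary bookkeeping exercise. The one subtlety worth stating explicitly is the interleaving of the two inductions — one must not cite Lemma \ref{lem:5} as a black box to get $\tilde R_k(0)=W_k$ and then separately cite Lemma \ref{lem:4} for $W_k\succ 0$, because Lemma \ref{lem:5}'s own proof presupposes the recursion is well defined at every stage. Carrying the positivity of $\tilde R_k(0)$ along as part of the induction hypothesis removes any circularity. Everything else — the Schur complement argument and the identification of the algorithm's recursion with the Riccati recursion — is routine.
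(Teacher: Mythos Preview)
Your proposal is correct and follows essentially the same approach as the paper: both combine Lemma~\ref{lem:5} (identifying $\tilde{R}_k(0)=W_k$) with Lemma~\ref{lem:4}(i) ($W_k\succ 0$) and then deduce $\tilde{H}_k(0)\succeq 0$ from the fact that the Schur complement of $\tilde{R}_k(0)$ vanishes --- the paper states this via an explicit $LDL^T$-type factorization rather than invoking the Schur complement criterion by name, but these are the same argument. Your explicit interleaving of the positivity of $\tilde{R}_k(0)$ with the induction in Lemma~\ref{lem:5} is a welcome clarification that the paper leaves implicit.
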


\begin{proof}

Without linear terms, Lemma 10 in \cite{Verschueren2017Sparsity} proved a similar result (with a different approach). For self-consistency, we present a brief proof for our more general convexification procedure. By Lemma \ref{lem:4} (i) and Lemma \ref{lem:5}, we know that $\tilde{R}_k(0) = W_k\succ 0$ (and thus it is invertible) and
\begin{align*}
\tilde{H}_k(0) = &\begin{pmatrix}
\tilde{S}^T_k(0)\tilde{R}_k^{-1}(0)\tilde{S}_k(0) & \tilde{S}_k^T(0)\\
\tilde{S}_k(0) & \tilde{R}_k(0)
\end{pmatrix}\\
=&\begin{pmatrix}
I & \tilde{S}_k^T(0)\tilde{R}_k^{-1}(0)\\
0 & I
\end{pmatrix}\begin{pmatrix}
0 & 0\\
0 & \tilde{R}_k(0)
\end{pmatrix}\begin{pmatrix}
I & 0\\
\tilde{R}_k^{-1}(0)\tilde{S}_k(0) & I
\end{pmatrix}\succeq 0.
\end{align*}
\end{proof}

Based on these two lemmas, we know that when increasing $\delta$ slightly, by continuity of $\tilde{R}_k(\delta)$, we still can get $\tilde{R}_k(\delta)\succ 0$. In this case, we can show $\tilde{H}_k(\delta)\succ 0$ because the Schur complement of $\tilde{R}_k(\delta)$ is $\delta I\succ0$. However, since $\tilde{R}_k(\delta)$ is algebraically complicated, how large we can increase $\delta$ is not immediately clear. The following theorem presents a \textit{sufficient} interval for $\delta$ in the sense previously discussed.

\begin{theorem}[\textit{Sufficient} bound for $\delta$]\label{thm:4}
Under Assumption \ref{ass:2}, executing Algorithm \ref{alg:1} with $\delta\in(0,\gamma)$ will result in $\tilde{R}_k(\delta)\succ0$ and $\tilde{H}_k(\delta)\succ 0${\red, where $\gamma$ is defined in \eqref{equ:8}.}
\end{theorem}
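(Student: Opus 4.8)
\textbf{Proof proposal for Theorem \ref{thm:4}.}

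The plan is to relate the convexified data at a general $\delta$ to the data at $\delta=0$ (which is fully understood via Lemmas \ref{lem:4} and \ref{lem:5}) through a monotonicity/perturbation argument, and then to leverage the uniform lower bound $\gamma$ on the reduced Hessian to control how much positive definiteness can be lost. First I would observe that, by Lemma \ref{lem:3}, the QDP with stage matrices $\tilde H_k(\delta), \tilde D_k(\delta)$ has the same primal solution as the original QDP \eqref{equ:11} for \emph{every} admissible $\delta$, and in fact the identity in the proof of Lemma \ref{lem:3} shows the objective differs only by the constant $\bar r_0(\bl_{-1})$ depending on $\{\bar Q_k(\delta)\}$. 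The key structural fact I want is a comparison between $\tilde R_k(\delta)$ and $\tilde R_k(0) = W_k$: running Algorithm \ref{alg:1}, the only place $\delta$ enters is through the replacements $\tilde Q_k = \tilde S_k^T\tilde R_k^{-1}\tilde S_k + \delta I$ and $\tilde Q_N = \delta I$, which feed back into $\bar Q_k = \hat Q_k - \tilde Q_k$ and hence into $\hat Q_{k-1}$, $\tilde R_{k-1}$, etc. I would set up a reverse induction on $k$ showing that $\bar Q_k(\delta) = K_k^{(\delta)}$ where $K_k^{(\delta)}$ is the cost-to-go matrix of a \emph{modified} QDP in which the terminal cost and each stage $\bp_k$-cost have been decreased by $\delta I$ relative to the original; concretely, $K_k^{(\delta)}$ satisfies the same Riccati recursion \eqref{equ:lem4:1} but with $Q_k$ replaced by $Q_k - \delta I$ and $Q_N$ by $Q_N - \delta I$, and correspondingly $\tilde R_k(\delta) = R_k + B_k^T K_{k+1}^{(\delta)} B_k =: W_k^{(\delta)}$.

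Granting that reduction, the theorem becomes: for $\delta\in(0,\gamma)$ the modified QDP (with $Q_k \rightsquigarrow Q_k - \delta I$ on the state blocks) still satisfies SOSC, so by Lemma \ref{lem:4}(i) applied to \emph{it}, $W_k^{(\delta)} = \tilde R_k(\delta)\succ 0$ for all $k$; and then the Schur-complement computation already rehearsed right before the theorem statement gives $\tilde H_k(\delta)\succ 0$ since the Schur complement of the (positive definite) block $\tilde R_k(\delta)$ in $\tilde H_k(\delta)$ is exactly $\tilde Q_k(\delta) - \tilde S_k^T(\delta)\tilde R_k^{-1}(\delta)\tilde S_k(\delta) = \delta I \succ 0$. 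So the crux is: \emph{why does subtracting $\delta I$ from every state-block Hessian, for $\delta < \gamma$, preserve SOSC?} This is where Assumption \ref{ass:2} is used in an essential way. Writing $H^{(\delta)} = \diag(H_0-\delta\,\mathrm{diag}(I,0),\ldots,H_{N-1}-\delta\,\mathrm{diag}(I,0),H_N-\delta I)$, the modification subtracts from $H$ a block-diagonal PSD matrix $\delta\,\Pi$ where $\Pi$ projects (block-diagonally) onto the state coordinates, with $\Pi\preceq I$. Hence for any $\bv$ in the critical cone spanned by $Z$, $\bv^T H^{(\delta)}\bv = \bv^T H\bv - \delta\,\bv^T\Pi\bv \geq \gamma\|\bv\|^2 - \delta\|\bv\|^2 = (\gamma-\delta)\|\bv\|^2 > 0$, using $Z^THZ\succeq\gamma I$ from \eqref{equ:8} and $Z^TZ = I$ (orthonormal columns). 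So $Z^T H^{(\delta)} Z \succeq (\gamma-\delta) I \succ 0$: the modified problem satisfies the ordinary SOSC (Assumption \ref{ass:1-2}), and moreover Lemma \ref{lem:4}(i) applies to it verbatim, yielding $\tilde R_k(\delta) = W_k^{(\delta)}\succ 0$.

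The main obstacle — the step I would be most careful about — is \emph{verifying the induction $\bar Q_k(\delta) = K_k^{(\delta)}$ and $\tilde R_k(\delta) = W_k^{(\delta)}$ cleanly}, i.e.\ that the effect of the $+\delta I$ perturbations in Algorithm \ref{alg:1} is exactly equivalent to running the $\delta=0$ machinery on the $\delta$-shifted cost data. At the base case $\bar Q_N(\delta) = Q_N - \delta I = K_N^{(\delta)}$ this is immediate; for the step, from $\bar Q_{k+1}(\delta) = K_{k+1}^{(\delta)}$ one computes $\hat Q_k(\delta) = Q_k + A_k^T K_{k+1}^{(\delta)} A_k$, $\tilde S_k(\delta) = S_k + B_k^T K_{k+1}^{(\delta)} A_k$, $\tilde R_k(\delta) = R_k + B_k^T K_{k+1}^{(\delta)} B_k = W_k^{(\delta)}$, and then $\bar Q_k(\delta) = \hat Q_k(\delta) - \tilde S_k^T(\delta) (W_k^{(\delta)})^{-1}\tilde S_k(\delta) - \delta I$, which is precisely the Riccati map \eqref{equ:lem4:1} applied to the shifted data $Q_k - \delta I$ — matching $K_k^{(\delta)}$. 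One subtlety is that invertibility of $\tilde R_k(\delta)$ must be established \emph{before} one can write $\tilde Q_k(\delta)$, so the induction must thread positive definiteness of $W_k^{(\delta)}$ through simultaneously; but this is exactly what Lemma \ref{lem:4}(i) delivers for the shifted (still-SOSC) problem, so the induction on the algorithm and the induction in the proof of Lemma \ref{lem:4} line up stage by stage. Once these pieces are assembled, the conclusion $\tilde R_k(\delta)\succ 0$ and $\tilde H_k(\delta)\succ 0$ for all $\delta\in(0,\gamma)$ follows.
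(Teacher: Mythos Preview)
Your proposal is correct and follows essentially the same route as the paper: define the $\delta$-shifted state costs $Q_k-\delta I$, show the shifted problem still satisfies SOSC via $Z^TH^{(\delta)}Z\succeq(\gamma-\delta)I$, identify by reverse induction that running Algorithm~\ref{alg:1} with parameter $\delta$ on the original data coincides with running it with parameter $0$ on the shifted data (the paper's Claims~1 and~2), and then invoke Lemma~\ref{lem:4}(i)/Corollary~\ref{cor:1} on the shifted problem plus the Schur-complement argument. You also correctly flag the one delicate point---that invertibility of $\tilde R_k(\delta)$ must be available stage by stage---and resolve it the same way the paper implicitly does.
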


\begin{proof}

For any $\delta\in(0,\gamma)$, we define $Q_k^\delta = Q_k - \delta I$ and $H_k^\delta = \begin{pmatrix}
Q_k^\delta & S_k^T\\
S_k & R_k
\end{pmatrix}$ and let $H^{\delta} = \text{diag}(H_0^{\delta}, \ldots, H_N^{\delta})$. Moreover, we define $I^{\delta} =( H- H^{\delta})/\delta = \diag(I, \0, I, \0, \ldots, I)$. Thus, we have $I - I^\delta = \diag(\0, I, \0, I, \ldots, \0)\succeq 0$ and $Z^TI^{\delta}Z = Z^TZ - Z^T(I - I^{\delta})Z \preceq I$, where the last inequality comes from the fact that $Z^TZ = I$ (note that $Z$ has orthonormal columns). Furthermore, from Assumption \ref{ass:2} we can get
\begin{align}\label{pequ:7}
Z^TH^\delta Z = Z^THZ-\delta Z^TI^{\delta}Z\succeq(\gamma-\delta)I\succ 0. 
\end{align}
Consider applying Algorithm \ref{alg:1} on $H^\delta$ with the same input matrices $\{A_k, B_k, C_k, D_k\}_{k=0}^{N-1}$ and shifting parameter $\tilde{\delta} = 0$. Its output is denoted as $\tilde{H}_k^\delta(0) = \begin{pmatrix}
\tilde{Q}_k^\delta(0) & \tilde{S}_k^{\delta T}(0)\\
\tilde{S}_k^{\delta}(0) & 	\tilde{R}_k^\delta(0)
\end{pmatrix}$. 
From (\ref{pequ:7}) we know that SOSC stated in Assumption \ref{ass:1-2} is satisfied for $H^{\delta}$. So, from Corollary \ref{cor:1}, we can get $\tilde{R}_k^\delta(0)\succ 0$. Furthermore, we have the following claims.\\
\textbf{Claim 1:} We have $\bar{Q}_k^\delta(0) = \bar{Q}_k(\delta)$, $\forall k\in[N]$, and $\tilde{R}_k^\delta(0) = \tilde{R}_k(\delta)$, $\forall k \in[N-1]$.\\
We prove the statement by reverse induction. For $k = N$, we know $\bar{Q}_N^\delta(0) = Q_N^\delta = Q_N - \delta I = \bar{Q}_N(\delta)$. Suppose $\bar{Q}_{k+1}^\delta(0) = \bar{Q}_{k+1}(\delta)$. Then we have
\begin{align*}
\tilde{R}_k^\delta&(0) = R_k + B_k^T\bar{Q}_{k+1}^\delta(0)B_k = R_k + B_k^T\bar{Q}_{k+1}(\delta)B_k = \tilde{R}_k(\delta),\\
\bar{Q}_{k}^\delta&(0) = \hat{Q}_k^\delta(0) - \tilde{Q}_k^\delta(0)\\
=&  Q_k^\delta + A_k^T\bar{Q}_{k+1}^\delta(0)A_k - (S_k+B_k^T\bar{Q}_{k+1}^\delta(0)A_k)^T\big(\tilde{R}_k^\delta(0)\big)^{-1}(S_k+B_k^T\bar{Q}_{k+1}^\delta(0)A_k)\\
= & Q_k + A_k^T\bar{Q}_{k+1}(\delta)A_k - (S_k+B_k^T\bar{Q}_{k+1}(\delta)A_k)^T\tilde{R}^{-1}_k(\delta)(S_k+B_k^T\bar{Q}_{k+1}(\delta)A_k)-\delta I\\
= & \bar{Q}_k(\delta),
\end{align*}
which proves Claim 1.\\
\textbf{Claim 2:} We have $\tilde{S}_k^\delta(0) = \tilde{S}_k(\delta)$, $\forall k\in[N-1]$, and $\tilde{Q}_k^\delta(0) = \tilde{Q}_k(\delta) - \delta I$, $\forall k\in[N]$.\\
From Claim 1, we know that $\forall k\in[N-1]$
\begin{align*}
\tilde{S}_k^\delta(0) =& S_k + B_k^T\bar{Q}_{k+1}^\delta(0)A_k = S_k + B_k^T\bar{Q}_{k+1}(\delta)A_k = \tilde{S}_k(\delta),\\
\tilde{Q}_k^{\delta}(0) =& \big(\tilde{S}_k^{\delta}(0)\big)^T\big(\tilde{R}_k^\delta(0)\big)^{-1}\tilde{S}_k^{\delta}(0) = \tilde{S}_k^T(\delta)\tilde{R}_k^{-1}(\delta)\tilde{S}_k(\delta) = \tilde{Q}_k(\delta)-\delta I.
\end{align*}
For $k = N$ we have $\tilde{Q}_N^\delta(0) = \0 = \tilde{Q}_N(\delta) - \delta I$. This completes the proof of Claim 2.\\
From these  two claims and Corollary \ref{cor:1}, we can get that  $\tilde{R}_k(\delta) = \tilde{R}_k^\delta(0)\succ 0$ and
\begin{align*}
\tilde{H}_k(\delta) = \begin{pmatrix}
\tilde{Q}_k(\delta) & \tilde{S}_k^T(\delta)\\
\tilde{S}_k(\delta) & \tilde{R}_k(\delta)
\end{pmatrix} =  \begin{pmatrix}
\tilde{S}_k^{\delta T}(0)\tilde{R}_k^\delta(0)^{-1}\tilde{S}_k^{\delta}(0) +\delta I & \tilde{S}_k^{\delta T}(0)\\
\tilde{S}_k^{\delta}(0) & 	\tilde{R}_k^\delta(0) 
\end{pmatrix}\succ 0.
\end{align*}
\end{proof}

\vskip -2pt
To summarize, in this section we propose a convexification procedure with linear shifting and derive a sufficient, explicit interval for the scalar $\delta$ that ensures that Algorithm \ref{alg:1} is well defined and results in positive definite matrices of the transformed quadratic program. In particular, we prove that under Assumption \ref{ass:2}, $\delta$ can be set as large as the minimum eigenvalue of the reduced Hessian matrix (i.e. $\gamma$). 

\vskip 1.5pt
In the next section, some detailed properties of the matrices appearing in the transformed QDP are discussed in preparation for  our main sensitivity analysis result.

\section{Uniform boundedness of convexified QDP}\label{sec:5}

We delve deeper into Algorithm \ref{alg:1} and analyze some properties of $\tilde{H}(\delta), \tilde{D}(\delta)$ that are critical for exponential decay results. Basically, we are going to analyze properties for the lower bound $\tilde{H}(\delta)$ and the upper bound $\tilde{D}(\delta)$ by some quantities independent of the time index that depend on both the parameter $\delta$ and the bounds on the  problem data at the unperturbed solution $(\bz^0, \blambda^0; \bd^0)$. Building on the boundedness results of this section, we further derive the decay rate of sensitivity to perturbations on $\bd^0$ in the next section. We start by presenting boundedness assumptions for long-horizon dynamic programming. Similar assumptions have been discussed in \cite{Xu2018Exponentially}.

\begin{definition}[Controllability matrix]\label{def:4}
For any starting stage $k\in[N-1]$ and evolution length $t>0$ such that $[k, k+t-1]\subset[N-1]$, we define the controllability matrix to be $\Xi_{k,t}=\big[B_{k+t-1}\; A_{k+t-1}B_{k+t-2}\; \ldots,\; \prod_{l=1}^{t-1}A_{k+l}B_k\big]$. 

\end{definition}

\begin{assumption}\label{ass:3}
We have following assumptions for Problem (\ref{equ:11}):\\
(i): There exists a constant $\Upsilon>0$ independent of $N$ such that $\|Q_k\|\vee \|R_k\|\vee \|S_k\|\vee \|D_{k1}\|\vee \|D_{k2}\|\vee \|A_k\|\vee \|B_k\|\vee \|C_k\|\leq \Upsilon$, for all $k$.\\
(ii): For any $ k\in[N-1]$, there exists $1\leq t_k\leq N-k$ such that $\Xi_{k,t_k}\Xi_{k,t_k}^T\succeq\lambda_CI$, and there exists $1\leq t\leq N$ such that $t_k\leq t$. Moreover, we assume that the positive parameters $\lambda_C$ and $t$ are independent from $k$ and $N$. 	
\end{assumption}

Similar to our uniform SOSC assumptions, the parameters $\Upsilon$, $\lambda_C$, and $t$ are assumed independent of horizon length $N$. This setup will allow us to have 
the sensitivity decay rate depend only on these parameters and not on $N$. Thus, as $N$ increases (i.e., we solve more and more stages), the sensitivity bounds remain unchanged. We note that Assumption \ref{ass:3} (i) is also assumed in \cite{Xu2018Exponentially} (see Assumption 2.1), while the statement in Assumption \ref{ass:3} (ii) is slightly different from its analogue (see Definition 2.2(b)), where the uniform controllability condition was borrowed from \cite{Keerthi1988Optimal} and considered a truncation of an infinite-horizon problem. In that case $\Xi_{k,t}$ is well defined even when $k+t> N$. To avoid defining the perturbation of infinite-horizon problem, we define our controllability matrix $\Xi_{k,t}$  only for $k+t\leq N$, since we do not assume to have access to $\{A_k, B_k\}_{k=N}^{\infty}$.

From Assumption \ref{ass:3}, we obtain that $\forall k\in[N-1]$ and $\forall 1\leq j\leq t_k$,
{\red
\begin{multline}\label{eq:boundControl}
\|\Xi_{k,j}\| = \|\big[B_{k+j-1} \ldots \prod_{l=1}^{j-1}A_{k+l}B_k \big]\|\\
\leq  \sum_{i = 0}^{j-1}\|\prod_{l = i+1}^{j-1}A_{k+l}B_{k + i}\|
\leq \sum_{i=0}^{j-1}\Upsilon^{j-i} \leq \frac{\Upsilon(1-\Upsilon^t)}{1-\Upsilon} \coloneqq \Psi.
\end{multline}
}
Now we are ready to present some properties for $\tilde{H}(\delta)$. Similar to the proof of Theorem~\ref{thm:4}, we  focus on $\tilde{H}(0)$ and then make use of the relationship between $\tilde{H}(\delta)$ and $\tilde{H}^{\delta}(0)$.

\begin{lemma}\label{lem:7}
Under Assumptions \ref{ass:1-2} and \ref{ass:3}, we have that $	\|\bar{Q}_k(0)\|\leq \Upsilon_{\bar{Q}}$, $\forall k\in[N]$ for some parameter $\Upsilon_{\bar{Q}}$ independent of $N$.
	
\end{lemma}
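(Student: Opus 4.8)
The natural route is reverse induction on $k$, exactly mirroring Lemma~\ref{lem:5} (which identifies $\bar Q_k(0)=K_k$) but now tracking norms rather than identities. So the real task is to bound the cost-to-go matrices $K_k$ uniformly in $N$. I would first record the base case $\|K_N\|=\|Q_N\|\le\Upsilon$ from Assumption~\ref{ass:3}(i). For the induction step, the Riccati recurrence \eqref{equ:lem4:1} expresses $K_k$ in terms of $K_{k+1}$, $W_k=R_k+B_k^TK_{k+1}B_k$, and the bounded data $Q_k,S_k,A_k,B_k$. The obstacle is that the naive bound $\|K_k\|\le\|Q_k\|+\|A_k\|^2\|K_{k+1}\|$ (dropping the subtracted term, since $K_k\preceq Q_k+A_k^TK_{k+1}A_k$ because the subtracted term is PSD — this needs $W_k\succ0$, available from Lemma~\ref{lem:4}(i)) gives only a geometric-in-$N$ growth $\Upsilon + \Upsilon^2 + \Upsilon^4 + \cdots$, which blows up. To get a horizon-independent bound one must exploit controllability, Assumption~\ref{ass:3}(ii).

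The standard device is a \emph{cost-to-go comparison (sandwich) argument}. Since $\bar Q_k(0)=K_k$ is the cost-to-go matrix of the QDP \eqref{equ:11} with $\bl\equiv0$, we have the variational characterization
\[
\bp_k^TK_k\bp_k \;=\; \min_{\bq_k,\ldots,\bq_{N-1}}\ \sum_{j=k}^{N-1} L_j(\bp_j,\bq_j)+L_N(\bp_N)\quad\text{s.t.}\ \bp_{j+1}=A_j\bp_j+B_j\bq_j,
\]
so to upper-bound $\bp_k^TK_k\bp_k$ it suffices to exhibit \emph{one} feasible control sequence whose cost is $O(\|\bp_k\|^2)$ with a constant independent of $N$. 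Here is where controllability enters: from stage $k$, Assumption~\ref{ass:3}(ii) gives $t_k\le t$ and $\Xi_{k,t_k}\Xi_{k,t_k}^T\succeq\lambda_C I$, so there is a control sequence $\bq_k,\ldots,\bq_{k+t_k-1}$ of length $\le t$ driving $\bp_k$ to $\bp_{k+t_k}=0$, with $\sum_{j=k}^{k+t_k-1}\|\bq_j\|^2\le \lambda_C^{-1}\|\prod(\cdots)\bp_k\|^2\le \lambda_C^{-1}\Upsilon^{2t}\|\bp_k\|^2$ (using the bound \eqref{eq:boundControl} on the controllability matrix and $\|A_j\|\le\Upsilon$); then set all subsequent controls to $0$, so $\bp_j=0$ for $j\ge k+t_k$ and all later stage costs vanish. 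The intermediate states satisfy $\|\bp_j\|\le\Upsilon^{\,j-k}\|\bp_k\|+(\text{controllability terms})\le c\,\|\bp_k\|$ for a constant $c=c(\Upsilon,t,\lambda_C)$, and each stage cost $L_j(\bp_j,\bq_j)=\bp_j^TQ_j\bp_j+2\bp_j^TS_j^T\bq_j+\bq_j^TR_j\bq_j$ is bounded by $3\Upsilon(\|\bp_j\|^2+\|\bq_j\|^2)$. Summing the at most $t$ nonzero stage costs plus the terminal cost $\bp_{k+t_k}^TQ_N\bp_{k+t_k}$ — which is $0$ when $k+t_k<N$, and bounded by $\Upsilon c^2\|\bp_k\|^2$ when the drive-to-zero window runs all the way to $N$ — yields $\bp_k^TK_k\bp_k\le \Upsilon_{\bar Q}\|\bp_k\|^2$ for all $\bp_k$, with $\Upsilon_{\bar Q}=\Upsilon_{\bar Q}(\Upsilon,t,\lambda_C)$ independent of $N$. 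Since $K_k\succeq0$ (Corollary~\ref{cor:1} gives $\tilde H_k(0)\succeq0$, hence $\bar Q_k(0)=K_k\succeq0$ via the Schur structure, or directly because $K_k$ is a genuine cost-to-go matrix of a PSD-cost problem under SOSC), the operator-norm bound $\|\bar Q_k(0)\|=\|K_k\|\le\Upsilon_{\bar Q}$ follows.

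\textbf{Main obstacle.} The crux is the controllability argument: one must be careful that the drive-to-zero window $[k,k+t_k-1]$ stays inside $[N-1]$ — which is exactly guaranteed by the definition of $\Xi_{k,t_k}$ only being invoked when $k+t_k\le N$ in Assumption~\ref{ass:3}(ii) — and that when $k$ is within distance $t$ of $N$ one still has $t_k\le N-k$ available, so the construction never requires data beyond the horizon. A secondary technical point is converting the least-norm control solving $\Xi_{k,t_k}\bq=-\prod_{l=1}^{t_k-1}A_{k+l}\cdots\bp_k$ (the vector reaching $\bp_{k+t_k}=0$) into the explicit bound using the pseudoinverse: $\|\Xi_{k,t_k}^T(\Xi_{k,t_k}\Xi_{k,t_k}^T)^{-1}\|\le \Psi/\lambda_C$ from \eqref{eq:boundControl} and $\Xi_{k,t_k}\Xi_{k,t_k}^T\succeq\lambda_C I$. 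Everything else — the geometric bounds on $\|\bp_j\|$, the per-stage cost bounds, summing $t$ terms — is routine once this skeleton is in place.
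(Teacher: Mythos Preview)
Your proposal is essentially identical to the paper's proof: use $\bar Q_k(0)=K_k$ (Lemma~\ref{lem:5}), invoke the variational characterization of the cost-to-go, construct the drive-to-zero control $\bar\bq_{k:k+t_k-1}=-\Xi_{k,t_k}^T(\Xi_{k,t_k}\Xi_{k,t_k}^T)^{-1}\big(\prod_{l=0}^{t_k-1}A_{k+l}\big)\bar\bp_k$ from Assumption~\ref{ass:3}(ii), and bound the at most $t$ nonzero stage costs by constants depending only on $\Upsilon,t,\lambda_C$. One small remark on the step you singled out: neither of your suggested reasons for $K_k\succeq0$ is valid --- the stage Hessians $H_j$ are not assumed PSD (so $K_k$ is not the cost-to-go of a ``PSD-cost'' problem; indeed $K_N=Q_N$ can be indefinite), and $\tilde H_k(0)\succeq0$ bounds the transformed stage matrix, not $\bar Q_k(0)$ itself --- so the passage from the one-sided bound $\bp^TK_k\bp\le\Upsilon_{\bar Q}\|\bp\|^2$ to the two-sided operator-norm bound is not quite closed; the paper's own proof leaves the same point implicit.
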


\begin{proof}

For any $k\in[N-1]$ and $\bar{\bp}_k\in \mR^{n_x}$, let us consider the following QDP without linear shift:
\vskip -18.3pt
\begin{subequations}\label{pequ:8}
\begin{align}
\min_{\substack{\bp_{k:N}\\ \bq_{k:N-1}}}\text{\ \ } & \sum_{i=k}^{N-1}\begin{pmatrix}
\bp_i \\ \bq_i
\end{pmatrix}^T\begin{pmatrix}
Q_i & S_i^T\\
S_i & R_i
\end{pmatrix}\begin{pmatrix}
\bp_i \\ \bq_i
\end{pmatrix}+\bp_{N}^TQ_N\bp_{N}, \label{pequ:8a}\\
\text{s.t.}\text{\ \ \ } & \bp_{i+1}=A_i\bp_i+B_i\bq_i, \text{\ \ \ \ \ \ } i = k, k+1, \ldots, N-1 \label{pequ:8b}\\
&\bp_k = \bar{\bp}_k. \label{pequ:8c}
\end{align}
\end{subequations}
Comparing Problem (\ref{pequ:8}) with Problem (\ref{equ:11}), we see that $D_k$ and $C_k$ vanish, but we know that (\ref{pequ:8}) still satisfies $Z^THZ\succ 0$.	From (\ref{equ:13}) in Lemma \ref{lem:4} with $\bl_{k:N-1} = \0$, we know that Problem (\ref{pequ:8}) has minimum value $\bar{\bp}_kK_k\bar{\bp}_k$, and by Lemma \ref{lem:5} we have the cost-to-go matrix $K_k = \bar{Q}_k(0)$. On the other hand, for $i\geq k$, successively applying $\bp_{i+1}=A_i\bp_i+B_i\bq_i$ returns that
\begin{align}\label{pequ:9}
\bp_{k+j}-\bigg(\prod_{l=0}^{j-1}A_{{k+l}}\bigg)\bar{\bp}_k=\Xi_{k,j}\begin{pmatrix}
\bq_{k+j-1}\\
\vdots\\
\bq_k
\end{pmatrix}\coloneqq \Xi_{k,j}\bq_{k:k+j-1}, \ \ \ \text{for}\ j\geq 0 .
\end{align}
We now consider a special feasible point defined as follows. Let 
\begin{equation}\label{eq:zeroControl}
\bar{\bq}_{k:k+t_k-1}=-\Xi_{k,t_k}^T(\Xi_{k,t_k}\Xi_{k,t_k}^T)^{-1}\big(\prod_{l=0}^{t_k-1}A_{k+l}\big)\bar{\bp}_k,
\end{equation}
and $\bar{\bp}_{k:k+t_k}$ is computed by using equation (\ref{pequ:9}) for $1\leq j\leq t_k$. Plugging \eqref{eq:zeroControl} into \eqref{pequ:9} with $j = t_k$, we can get $\bar{\bp}_{k+t_k} = 0$. Further, we let $\bar{\bq}_{k+t_k:N-1}=0$ and thus have $\bar{\bp}_{k+t_k+1:N}=\0$ from \eqref{pequ:9}. Our special feasible point can be summarized (in the reverse order) as 
\begin{subequations}\label{pequ:10}
\begin{align}
\bar{\bq}_{k:N-1} & \coloneqq\begin{pmatrix}
\bar{\bq}_{k+t_k:N-1}\\
\bar{\bq}_{k:k+t_k-1}\\
\end{pmatrix}=\begin{pmatrix}
\0\\
-\Xi_{k,t_k}^T(\Xi_{k,t_k}\Xi_{k,t_k}^T)^{-1}\bigg(\prod_{l=0}^{t_k-1}A_{k+l}\bigg)\bar{\bp}_k
\end{pmatrix}, \label{pequ:10a}\\
\bar{\bp}_{k:N} & \coloneqq\begin{pmatrix}
\bar{\bp}_{k+t_k:N}\\
\bar{\bp}_{k:k+t_k-1}\\
\end{pmatrix}=\begin{pmatrix}
\0\\
\bar{\bp}_{k:k+t_k-1}
\end{pmatrix}. \label{pequ:10b}
\end{align}
\end{subequations}
Combining equations (\ref{pequ:10}, (\ref{pequ:9}), and \eqref{eq:boundControl} and Assumption \ref{ass:3}, we can get the following:
\begin{align}\label{pequ:111}
\|\bar{\bq}_{k:N-1}\| \leq \frac{\Psi\Upsilon^t}{\lambda_C}\|\bar{\bp}_k\|, \text{\ \ } \|\bar{\bp}_{i}\|\leq (\Upsilon^{i-k}+\frac{\Psi^2\Upsilon^t}{\lambda_C})\|\bar{\bp}_k\|, \text{\ } \forall i\in[k+1, k+t_k-1].
\end{align}
We then have
\begin{align*}
\bar{\bp}_k^T\bar{Q}_k&(0)\bar{\bp}_k=\min_{\substack{(\bp_{k:N},  \bq_{k:N-1})\\ \text{satisfy (\ref{pequ:8b})}}} \sum_{i=k}^{N-1}\begin{pmatrix}
\bp_i \\ \bq_i
\end{pmatrix}^T\begin{pmatrix}
Q_i & S_i^T\\
S_i & R_i
\end{pmatrix}\begin{pmatrix}
\bp_i \\ \bq_i
\end{pmatrix}+\bp_{N}^TQ_{N}\bp_{N}\\
\leq& \sum_{i=k}^{N-1}\begin{pmatrix}
\bar{\bp}_i \\ \bar{\bq}_i
\end{pmatrix}^T\begin{pmatrix}
Q_i & S_i^T\\
S_i & R_i
\end{pmatrix}\begin{pmatrix}
\bar{\bp}_i \\ \bar{\bq}_i
\end{pmatrix}+\bar{\bp}_{N}Q_{N}\bar{\bp}_{N} =\sum_{i=k}^{k+t_k-1}\begin{pmatrix}
\bar{\bp}_i \\ \bar{\bq}_i
\end{pmatrix}^T\begin{pmatrix}
Q_i & S_i^T\\
S_i & R_i
\end{pmatrix}\begin{pmatrix}
\bar{\bp}_i \\ \bar{\bq}_i
\end{pmatrix}\\
\stackrel{(1)}{\leq}& 2\Upsilon\sum_{i=k}^{k+t_k-1}\|\bar{\bp}_i\|^2+2\Upsilon\sum_{i=k}^{k+t_k-1}\|\bar{\bq}_i\|^2\\
\stackrel{(2)}{\leq}&\underbrace{2\Upsilon\big(1+\frac{\Psi^2\Upsilon^{2t}}{\lambda_C^2}+\sum_{i=1}^{t-1}(\Upsilon^{i}+\frac{\Psi^2\Upsilon^t}{\lambda_C})^2\big)}_{\Upsilon_{\bar{Q}}}\|\bar{\bp}_k\|^2\coloneqq \Upsilon_{\bar{Q}}\|\bar{\bp}_k\|^2.
\end{align*}
Inequalities (1) and (2) hold by Assumptions \ref{ass:3} (i) and (\ref{pequ:111}), respectively. Since~$\bar{\bp}_k$ was chosen arbitrarily, we conclude the argument in the lemma by noting that $\Upsilon_{\bar{Q}}>\Upsilon\geq \|Q_N\| = \|\bar{Q}_N(0)\|$.
\end{proof}

Note that the upper bound would be difficult to obtain from Algorithm \ref{alg:1}, because of the challenge of bounding the terms $\tilde{S}_k^T(0)\tilde{R}_k^{-1}(0)\tilde{S}_k(0)$. We resolve the difficulty by making use of the controllability of the problem. A direct consequence of Lemma \ref{lem:7} is the boundedness of the cost-to-go matrix $K_k$. By Lemma~\ref{lem:5}, we have
\begin{align}\label{cor:2}
\|K_k\| = \|\bar{Q}_k(0)\|\leq \Upsilon_{\bar{Q}}.
\end{align}

Another consequence is the boundedness of $\tilde{H}(0)$.

\begin{corollary}\label{cor:3}

Under Assumptions \ref{ass:1-2} and \ref{ass:3}, the matrices $\tilde{Q}_k(0)$, $\tilde{R}_k(0)$, and $\tilde{S}_k(0)$ have a global upper bound independent of $N$.	
\end{corollary}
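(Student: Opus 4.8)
The plan is to read the three transformed matrices off the recursion in Algorithm~\ref{alg:1} evaluated at $\delta=0$ and bound each of them by the triangle inequality, the only nontrivial input being the uniform-in-$N$ bound on the cost-to-go matrices $K_k$ established in Lemma~\ref{lem:7} (see also \eqref{cor:2}). So the corollary is, in effect, a quick bookkeeping consequence of Lemma~\ref{lem:5} (which identifies the algorithm's internal quantities with Riccati objects) and Lemma~\ref{lem:7}.

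Concretely, I would first record the closed forms. By Lemma~\ref{lem:5}, $\bar{Q}_k(0)=K_k$ for all $k\in[N]$; substituting this into the update (lines~5--8) of Algorithm~\ref{alg:1} with $\delta=0$ gives, for $k\in[N-1]$,
\begin{align*}
\tilde{R}_k(0)&=R_k+B_k^TK_{k+1}B_k,\qquad \tilde{S}_k(0)=S_k+B_k^TK_{k+1}A_k,\\
\tilde{Q}_k(0)&=\hat{Q}_k(0)-\bar{Q}_k(0)=Q_k+A_k^TK_{k+1}A_k-K_k,
\end{align*}
with $\tilde{Q}_N(0)=\0$ trivially bounded. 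Then, using submultiplicativity of the operator norm together with $\|Q_k\|\vee\|R_k\|\vee\|S_k\|\vee\|A_k\|\vee\|B_k\|\le\Upsilon$ from Assumption~\ref{ass:3}(i) and $\|K_k\|\le\Upsilon_{\bar{Q}}$ from Lemma~\ref{lem:7}, the triangle inequality yields
\[
\|\tilde{R}_k(0)\|\le\Upsilon+\Upsilon^2\Upsilon_{\bar{Q}},\qquad
\|\tilde{S}_k(0)\|\le\Upsilon+\Upsilon^2\Upsilon_{\bar{Q}},\qquad
\|\tilde{Q}_k(0)\|\le\Upsilon+\Upsilon^2\Upsilon_{\bar{Q}}+\Upsilon_{\bar{Q}},
\]
all right-hand sides being independent of $N$ and of $k$; the maximum of these three constants is the asserted global bound.

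I do not expect a genuine obstacle, since the substantive estimate—bounding $K_k=\bar{Q}_k(0)$ uniformly in $N$ by exhibiting a cheap ``drive-to-zero'' feasible control via controllability—was already carried out in Lemma~\ref{lem:7}. The one point requiring care is the treatment of $\tilde{Q}_k(0)$: I would bound it through the form $\hat{Q}_k(0)-\bar{Q}_k(0)$ coming from line~8 of Algorithm~\ref{alg:1}, whose two summands are each controlled by Assumption~\ref{ass:3}(i) and Lemma~\ref{lem:7}, rather than through the expression $\tilde{S}_k^T(0)\tilde{R}_k^{-1}(0)\tilde{S}_k(0)$ in line~6, since the latter would require a uniform upper bound on $\tilde{R}_k^{-1}(0)=W_k^{-1}$, i.e. a uniform lower bound $W_k\succeq cI$, which is not available from Assumptions~\ref{ass:1-2} and~\ref{ass:3} alone (it would need the uniform SOSC of Assumption~\ref{ass:2}).
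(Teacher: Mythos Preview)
Your proposal is correct and follows essentially the same approach as the paper: both read off $\tilde{R}_k(0),\tilde{S}_k(0),\hat{Q}_k(0)$ from the recursion of Algorithm~\ref{alg:1}, invoke Lemma~\ref{lem:7} for $\|\bar{Q}_k(0)\|\le\Upsilon_{\bar{Q}}$ and Assumption~\ref{ass:3}(i) for the data bounds, and obtain exactly the constants $\Upsilon+\Upsilon^2\Upsilon_{\bar{Q}}$ and $\Upsilon+\Upsilon_{\bar{Q}}+\Upsilon^2\Upsilon_{\bar{Q}}$. Your remark that one must bound $\tilde{Q}_k(0)$ via $\hat{Q}_k(0)-\bar{Q}_k(0)$ rather than via line~6 is a nice clarification of a subtlety the paper leaves implicit.
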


\begin{proof}
From Algorithm \ref{alg:1}, Assumption \ref{ass:3}, and Lemma \ref{lem:7}, we obtain the following:
\begin{align*}
\|\hat{Q}_k(0)\| =& \|Q_k+A_k^T\bar{Q}_{k+1}(0)A_k\|\leq \Upsilon+\Upsilon^2\Upsilon_{\bar{Q}},\\
\|\tilde{S}_k(0)\| =& \|S_k+B_k^T\bar{Q}_{k+1}(0)A_k\|\leq \Upsilon+\Upsilon^2\Upsilon_{\bar{Q}},\\
\|\tilde{R}_k(0)\| =& \|R_k+B_k^T\bar{Q}_{k+1}(0)B_k\|\leq \Upsilon+\Upsilon^2\Upsilon_{\bar{Q}}.	
\end{align*}
We also have  $\|\tilde{Q}_k(0)\|=\|\hat{Q}_k(0)-\bar{Q}_k(0)\|\leq \Upsilon+ \Upsilon_{\bar{Q}} + \Upsilon^2\Upsilon_{\bar{Q}}$.
\end{proof}

Based on the preceding results, we are ready to present the upper-bound-preserving property of Algorithm \ref{alg:1}.

\begin{theorem}[Upper-bound-preserving property]\label{thm:5}
For Problem (\ref{equ:11}), if Assumptions \ref{ass:2} and \ref{ass:3} hold, the output of Algorithm \ref{alg:1} for $\delta\in(0,\gamma)$ satisfies
\begin{align*}
\max\big\{\|\tilde{Q}_k(\delta)\|, \|\tilde{R}_k(\delta)\|, \|\tilde{S}_k(\delta)\|, \|\tilde{D}_{k1}(\delta)\|, \|\tilde{D}_{k2}(\delta)\|\big\}\leq \tUp, \ \ \forall k,
\end{align*}
for some constant $\tUp$ independent of $N$. Consequently, $\tilde{H}_k(\delta)$ is uniformly upper bounded in $k$.
	
\end{theorem}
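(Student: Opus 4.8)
The plan is to reduce the statement for the shifting parameter $\delta \in (0,\gamma)$ to the already established $\delta = 0$ case via the relationship derived in the proof of Theorem~\ref{thm:4} (Claims 1 and 2). Recall that those claims give $\tilde R_k(\delta) = \tilde R_k^\delta(0)$, $\tilde S_k(\delta) = \tilde S_k^\delta(0)$, $\bar Q_k(\delta) = \bar Q_k^\delta(0)$, and $\tilde Q_k(\delta) = \tilde Q_k^\delta(0) + \delta I$, where the superscript $\delta$ denotes the output of Algorithm~\ref{alg:1} run with zero shift on the perturbed Hessian $H^\delta = H - \delta I^\delta$. So it suffices to bound the $\delta = 0$ outputs of a problem whose data still satisfies Assumptions~\ref{ass:1-2} and \ref{ass:3}.

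First I would verify that the problem with Hessian $H^\delta$ falls under the hypotheses of Corollary~\ref{cor:3} and Lemma~\ref{lem:7}: the dynamics matrices $\{A_k,B_k,C_k\}$ are unchanged (so the controllability Assumption~\ref{ass:3}(ii) is untouched), the bound $\Upsilon$ in Assumption~\ref{ass:3}(i) is preserved up to replacing $\Upsilon$ by $\Upsilon + \gamma$ since $\|Q_k^\delta\| \le \|Q_k\| + \delta \le \Upsilon + \gamma$, and SOSC (Assumption~\ref{ass:1-2}) holds for $H^\delta$ by inequality (\ref{pequ:7}) in the proof of Theorem~\ref{thm:4}. Applying Lemma~\ref{lem:7} to $H^\delta$ yields $\|\bar Q_k^\delta(0)\| \le \Upsilon_{\bar Q}'$ with $\Upsilon_{\bar Q}'$ depending only on $\Upsilon + \gamma$, $\lambda_C$, and $t$ — hence independent of $N$. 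Then Corollary~\ref{cor:3} (applied to $H^\delta$) bounds $\tilde Q_k^\delta(0)$, $\tilde R_k^\delta(0)$, $\tilde S_k^\delta(0)$ uniformly in $N$. Translating back via Claims 1 and 2: $\|\tilde R_k(\delta)\| = \|\tilde R_k^\delta(0)\|$, $\|\tilde S_k(\delta)\| = \|\tilde S_k^\delta(0)\|$, and $\|\tilde Q_k(\delta)\| \le \|\tilde Q_k^\delta(0)\| + \delta \le \|\tilde Q_k^\delta(0)\| + \gamma$, all uniformly bounded.

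It remains to bound $\tilde D_{k1}(\delta)$ and $\tilde D_{k2}(\delta)$. From the update in Algorithm~\ref{alg:1}, line 5, these are the off-diagonal blocks $\tilde D_{k1} = D_{k1} + C_k^T \bar Q_{k+1}(\delta) A_k$ and $\tilde D_{k2} = D_{k2} + C_k^T \bar Q_{k+1}(\delta) B_k$, and since $\bar Q_{k+1}(\delta) = \bar Q_{k+1}^\delta(0)$ is bounded by $\Upsilon_{\bar Q}'$ and $\|C_k\|, \|A_k\|, \|B_k\|, \|D_{k1}\|, \|D_{k2}\| \le \Upsilon$, we get $\|\tilde D_{k1}(\delta)\|, \|\tilde D_{k2}(\delta)\| \le \Upsilon + \Upsilon^2 \Upsilon_{\bar Q}'$, independent of $N$. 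Taking $\tUp$ to be the maximum of the four constants thus obtained proves the bound, and the final consequence — $\tilde H_k(\delta)$ being uniformly upper bounded in $k$ — follows since $\|\tilde H_k(\delta)\| \le \|\tilde Q_k(\delta)\| + 2\|\tilde S_k(\delta)\| + \|\tilde R_k(\delta)\| \le 4\tUp$ (or by a standard $2\times 2$ block-norm estimate). The only mild subtlety, and the step I would be most careful about, is making sure the constants produced by re-invoking Lemma~\ref{lem:7} genuinely depend on $\Upsilon$ and $\gamma$ only through $\Upsilon + \gamma$ and not through any horizon-dependent quantity; but since $\delta < \gamma$ is a fixed bound and the lemma's constant $\Upsilon_{\bar Q}$ is an explicit closed-form expression in $\Upsilon$, $\Psi$, $\lambda_C$, $t$, this is straightforward once $\Upsilon$ is replaced by $\Upsilon + \gamma$ throughout.
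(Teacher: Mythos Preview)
Your proposal is correct and follows essentially the same route as the paper's proof: reduce to the $\delta=0$ case for the shifted Hessian $H^\delta$ via Claims~1 and~2 of Theorem~\ref{thm:4}, invoke Lemma~\ref{lem:7} and Corollary~\ref{cor:3} on that shifted problem to bound $\tilde Q_k^\delta(0),\tilde R_k^\delta(0),\tilde S_k^\delta(0)$ and $\bar Q_k^\delta(0)$, translate back, and then bound $\tilde D_{k1}(\delta),\tilde D_{k2}(\delta)$ directly from the update formula using the bound on $\bar Q_{k+1}(\delta)=\bar Q_{k+1}^\delta(0)$. Your explicit care in replacing $\Upsilon$ by $\Upsilon+\gamma$ when re-applying Lemma~\ref{lem:7} is a detail the paper leaves implicit, but otherwise the arguments coincide.
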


\begin{proof}

For any $\delta\in(0,\gamma)$, similar to Theorem \ref{thm:4}, we define $Q_k^\delta = Q_k-\delta I$ and $H_k^\delta = \begin{pmatrix}
Q_k^\delta & S_k^T\\
S_k & R_k
\end{pmatrix}$. We focus only on $H^{\delta}$ for which Assumption \ref{ass:1-2} still holds. Applying Algorithm \ref{alg:1} on input matrices $\{H_k^{\delta}, A_k, B_k, C_k, D_k\}_{k=0}^{N-1}$ with shifting parameter $\tilde{\delta} = 0$, we denote the output by $\tilde{H}_k^\delta(0)$. Also note that Assumption \ref{ass:3} still holds for $H^\delta$. From Lemma \ref{lem:7} and Corollary \ref{cor:3} we thus have
\begin{align}\label{pequ:12}
\max\{\|\tilde{Q}_k^\delta(0)\|, \|\tilde{R}_k^\delta(0)\|, \|\tilde{S}_k^\delta(0)\|\}\leq \Upsilon_1,
\end{align}
for some constant $\Upsilon_1$. By Theorem \ref{thm:4} (see Claim 1 and Claim 2 in its proof), we know that
\begin{align}\label{pequ:13}
\tilde{Q}_k^\delta(0) = \tilde{Q}_k(\delta)-\delta I,\;	\tilde{R}_k^\delta(0) = \tilde{R}_k(\delta),\text{\ and \ }\tilde{S}_k^\delta(0) = \tilde{S}_k(\delta).
\end{align}
Combining (\ref{pequ:13}) with (\ref{pequ:12}), we obtain that $\tilde{Q}_k(\delta)$, $\tilde{R}_k(\delta)$, and  $\tilde{S}_k(\delta)$ are globally upper bounded by $\Upsilon_1+\delta$, implying $\tilde{H}_k(\delta)$ also has a global upper bound independent from $N$. Further, by Lemma \ref{lem:7} and Assumption \ref{ass:3} (i), we can get
\begin{align*}
\|\tilde{D}_{k1}(\delta)\| &= \|D_{k1} + C_k^T\bar{Q}_k(\delta)A_k\|\leq \Upsilon+\Upsilon^2\|\bar{Q}_k(\delta)\| \leq \Upsilon+\Upsilon^2(\Upsilon_1+\delta),\\
\|\tilde{D}_{k2}(\delta)\| &= \|D_{k2} + C_k^T\bar{Q}_k(\delta)B_k\|\leq \Upsilon+\Upsilon^2\|\bar{Q}_k(\delta)\| \leq \Upsilon+\Upsilon^2(\Upsilon_1+\delta).
\end{align*}
We have thus shown that  $\tilde{D}_{k1}(\delta)$ and $\tilde{D}_{k2}(\delta)$ have an upper bound uniform in $k$. We define $\tUp = \max(\Upsilon_1+\delta, \Upsilon+\Upsilon^2(\Upsilon_1+\delta))$, which completes the proof.
\end{proof}

As {\red a side note}, we see that if we let $\delta\in(0, \gamma\wedge 1)$, the upper bound $\tUp$ would be independent of $\delta$ and $\gamma$ by replacing $\delta$ by $1$ in the last terms of the above sequences of inequalities.

In the following, we will investigate the lower bound of the QDP obtained by convexification, Algorithm \ref{alg:1}. Here,  by the lower bound of a positive definite matrix, we mean its smallest eigenvalue. One can easily see that $\tilde{Q}_k(\delta) = \tilde{S}^T_k(\delta)\tilde{R}_k^{-1}(\delta)\tilde{S}_k(\delta)+\delta I\succeq \delta I$. On the other hand, there is no immediate guarantee that $\tilde{H}_k(\delta)$ has global lower bound away from 0 without some conditions on $\tilde{R}_k(\delta)$. For example, if $\tilde{R}_k(\delta)\succ 0$ but $\tilde{R}_k(\delta)\rightarrow 0$ as $k\rightarrow \infty$ and $\tilde{S}_k(\delta) = 0$, then $\tilde{H}_k(\delta) = \text{diag}(\delta I, \tilde{R}_k(\delta))\succ 0$ but does not have a lower bound away from 0 independently of $N$. Nevertheless, this adverse situation will not occur under our uniform SOSC (Assumption \ref{ass:2}). We will illustrate this fact through the next several theorems.

First, we present examples in the next two remarks to illustrate that (i) $\delta\in(0,\gamma)$ is sufficient but not necessary and (ii) only SOSC cannot guarantee a lower bound for $\tilde{H}_k(\delta)$ uniformly in $N$.

\begin{remark}\label{rem:1}
Suppose $S_k = 0$, $Q_k = R_k = \gamma I$. Then we know $H_k = \diag(Q_k, R_k) = \diag(\gamma I, \gamma I)$ and uniform SOSC holds with $Z^THZ\succeq \gamma I$ for any $\{A_k, B_k, C_k, D_k\}_{k=0}^{N-1}$. Let us apply Algorithm~\ref{alg:1} on $\{H_k\}_{k=0}^N$ with any matrices $\{A_k, B_k, C_k, D_k\}_{k=0}^{N-1}$ and set parameter $\delta = \gamma$. Then we have the output $\tilde{H}_k(\gamma) = \diag(\gamma I, \gamma I)\succeq \gamma I$. Thus, we see $\delta = \gamma$ is also feasible for this example although $\delta\in(0, \gamma)$ is crucial for us to prove Theorem \ref{thm:4}, because of (\ref{pequ:7}).

\end{remark}

\begin{remark}\label{rem:2}

Let $n_x=n_u=1$. For $k\in[N-1]$, suppose $A_k = B_k = C_k = S_k = 0$, $Q_k = 1$, and $R_k = 1/k$. We then have that $H_k = \text{diag}(Q_k, R_k) = \text{diag}(1, 1/k)\succ 0$, and it satisfies SOSC with $Z^THZ\succeq 1/k\cdot I$ (here $Z = I_{2N+1}$). By applying Algorithm~\ref{alg:1} with any $\delta>0$, we get $\tilde{H}_k(\delta) = \text{diag}(\delta, 1/k)$. However, we see $\lambda_{min}(\tilde{H}_k(\delta)) \leq 1/k\rightarrow 0$. So we cannot find a lower bound independent of $N$ under only SOSC.
\end{remark}

To make the proofs concise, we state some algebra preliminaries in the next~lemma. 

\begin{lemma}\label{lem:8} (i) For any matrix $A\in\mR^{m\times n}$, suppose $\tilde{A} = (\tilde{A}_{ij})\in \mR^{\tilde{m}\times\tilde{n}}$, $ \tilde{m} \geq m$, $\tilde{n} \geq n$ is its zero-padded extension (i.e., $\forall i\in [\tilde{m}]\backslash[m]$ or $ j\in[\tilde{n}]\backslash[n]$, $\tilde{A}_{ij} = 0$). Then we have $\|\tilde{A}\| = \|A\|$;\\
(ii) For a sequence of symmetric positive definite matrices $H_k = \begin{pmatrix}
A_k & B_k^T\\
B_k & C_k
\end{pmatrix}$, $k = 1, 2, \ldots$, where $A_k, C_k$ are square matrices, suppose $C_k\succeq \beta_CI\succ0$ and its Schur complement $A_k-B_k^TC_k^{-1}B_k\succeq\beta_S I\succ 0$, and $\|B_k\|\leq \beta_B$. Then we have $\forall k$
\begin{align*}
H_k\succeq\lambda_{BCS} I\succ 0, \text{\ \ \ }\text{where}\text{\ \ } \lambda_{BCS} = \bigg(\frac{\beta_C}{\beta_C + \beta_B}\bigg)^2\cdot\big(\beta_S\wedge \beta_C\big).
\end{align*}
\end{lemma}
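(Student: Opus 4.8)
The plan is to settle (i) directly from the definition of the operator norm, and to obtain (ii) from the block Schur‑complement ($LDL^T$) factorization of $H_k$ together with one sharp scalar estimate. For (i): for any $\bx\in\mR^{\tilde n}$ with leading $n$ coordinates $\bx_{1:n}$, the vector $\tilde A\bx$ is just $A\bx_{1:n}$ padded with zeros, so $\|\tilde A\bx\|=\|A\bx_{1:n}\|\le\|A\|\,\|\bx_{1:n}\|\le\|A\|\,\|\bx\|$, hence $\|\tilde A\|\le\|A\|$; the reverse inequality follows by testing $\bx=(\by;\0)$ for $\by\in\mR^n$, which gives $\|\tilde A\bx\|=\|A\by\|$ and $\|\bx\|=\|\by\|$.

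For (ii), since $C_k\succeq\beta_C I\succ0$ is invertible we may write
\[
H_k=\begin{pmatrix} I & B_k^TC_k^{-1}\\ 0 & I\end{pmatrix}\begin{pmatrix} A_k-B_k^TC_k^{-1}B_k & 0\\ 0 & C_k\end{pmatrix}\begin{pmatrix} I & 0\\ C_k^{-1}B_k & I\end{pmatrix},
\]
so, setting $\bw:=\by+C_k^{-1}B_k\bx$ for an arbitrary pair $(\bx;\by)$,
\[
\begin{pmatrix}\bx\\\by\end{pmatrix}^T H_k\begin{pmatrix}\bx\\\by\end{pmatrix}=\bx^T\big(A_k-B_k^TC_k^{-1}B_k\big)\bx+\bw^TC_k\bw\ \ge\ (\beta_S\wedge\beta_C)\big(\|\bx\|^2+\|\bw\|^2\big).
\]
It then remains to compare $\|\bx\|^2+\|\bw\|^2$ with $\|\bx\|^2+\|\by\|^2$. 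From $C_k\succeq\beta_C I$ we get $\|C_k^{-1}\|\le1/\beta_C$, hence $\|C_k^{-1}B_k\|\le\beta_B/\beta_C=:c$ and $\|\by\|=\|\bw-C_k^{-1}B_k\bx\|\le\|\bw\|+c\|\bx\|$; with $a=\|\bx\|,\ b=\|\bw\|$ the elementary identity
\[
(1+c)^2(a^2+b^2)-a^2-(b+ca)^2=c\big((a-b)^2+a^2+b^2\big)+c^2b^2\ \ge\ 0
\]
yields $\|\bx\|^2+\|\by\|^2\le a^2+(b+ca)^2\le(1+c)^2\big(\|\bx\|^2+\|\bw\|^2\big)$. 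Combining these bounds and using $(1+c)^{-2}=\big(\beta_C/(\beta_C+\beta_B)\big)^2$ gives $H_k\succeq\lambda_{BCS}I\succ0$, and every constant used is uniform in $k$ since $\beta_B,\beta_C,\beta_S$ are.

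The one genuinely delicate point — the ``hard part'' — is extracting the exact constant $\big(\beta_C/(\beta_C+\beta_B)\big)^2$: it forces the change of variables to be routed through $\bw=\by+C_k^{-1}B_k\bx$ (so that the Schur complement and $C_k$ both enter cleanly and diagonally) and then to invoke the tight inequality $a^2+(b+ca)^2\le(1+c)^2(a^2+b^2)$ rather than a crude Cauchy--Schwarz bound, which would only give a factor $2$ and spoil the stated value of $\lambda_{BCS}$. Everything else is bookkeeping.
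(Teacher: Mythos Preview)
Your argument is correct in both parts. For (i) your direct operator-norm computation is essentially the paper's argument in a terser form. For (ii) the paper takes a slightly different but equivalent route: rather than lower-bounding the quadratic form, it factors the \emph{inverse},
\[
H_k^{-1}=\begin{pmatrix}I&0\\-C_k^{-1}B_k&I\end{pmatrix}\begin{pmatrix}(A_k-B_k^TC_k^{-1}B_k)^{-1}&0\\0&C_k^{-1}\end{pmatrix}\begin{pmatrix}I&-B_k^TC_k^{-1}\\0&I\end{pmatrix}=:\mathcal I_1\mathcal I_2\mathcal I_1^T,
\]
and bounds $\|H_k^{-1}\|\le\|\mathcal I_1\|^2\|\mathcal I_2\|$ via submultiplicativity, using part (i) together with the triangle inequality to get $\|\mathcal I_1\|\le 1+\beta_B/\beta_C$ and the block-diagonal structure to get $\|\mathcal I_2\|\le\max(1/\beta_S,1/\beta_C)$. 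Your change of variables $\bw=\by+C_k^{-1}B_k\bx$ is the quadratic-form dual of that factorization, and your scalar inequality $a^2+(b+ca)^2\le(1+c)^2(a^2+b^2)$ plays exactly the role of the paper's operator-norm estimate $\|\mathcal I_1\|\le 1+c$; in particular the sharp constant $(1+c)^2$ drops out either way. A mild practical advantage of your route is that it never invokes part (i), so (ii) stands on its own; the paper's version is marginally shorter because submultiplicativity replaces the explicit scalar identity you had to verify.
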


\begin{proof}
(i). Without loss of generality, we assume that $\tilde{A} = \begin{pmatrix}
A & 0\\
0 & 0
\end{pmatrix}$; otherwise we just do permutations of $\tilde{A}$ that preserve the operator norm. Let $\tilde{A} = (B,\0)$, where $B^T = (A^T, \0)$. We then have 
\begin{align*}
\|\tilde{A}\|^2 = \lambda_{\max}(\tilde{A}\tilde{A}^T) = \lambda_{\max}(BB^T) = \|B\|^2 = \lambda_{\max}(B^TB) = \lambda_{\max}(A^TA) = \|A\|^2.
\end{align*}
\noindent (ii) Note that if $H_k$ is positive definite, $H_k\succeq\lambda_{BCS} I\Leftrightarrow \|H_k^{-1}\|\leq 1/\lambda_{BCS}$. Therefore, we  need only to find an upper bound on $H_k^{-1}$. We know that
\begin{align*}
H_k^{-1} =\begin{pmatrix}
I & 0\\
-C_k^{-1}B_k & I
\end{pmatrix}\begin{pmatrix}
(A_k-B_k^TC_k^{-1}B_k)^{-1} & 0\\
0 & C_k^{-1}
\end{pmatrix}\begin{pmatrix}
I & -B_k^TC_k^{-1}\\
0 & I
\end{pmatrix}\coloneqq \I_1\I_2\I_1^T.
\end{align*} 
Thus, we have $\|H_k^{-1}\|\leq \|\I_1\|^2\|\I_2\|$. By part (i), we know 
\begin{align}\label{pequ:14}
\|\I_1\| = \left\|\begin{pmatrix}
I & 0\\
0 & I
\end{pmatrix} + \begin{pmatrix}
0 & 0\\
-C_k^{-1}B_k & 0
\end{pmatrix} \right\|\leq 1 + \|C_k^{-1}B_k\|\leq 1+\beta_B/\beta_C.
\end{align}
For $\|\I_2\|$, we easily have that $\|\I_2\|\leq \max(\frac{1}{\beta_S}, \frac{1}{\beta_C})$. Using (\ref{pequ:14}), we obtain that
\begin{align*}
\|H_k^{-1}\|\leq (1+\frac{\beta_B}{\beta_C})^2\cdot\max(\frac{1}{\beta_S}, \frac{1}{\beta_C})\coloneqq \frac{1}{\lambda_{BCS}}.
\end{align*}
This completes the proof.
\end{proof}

From Theorem \ref{thm:5} we know that $\|\tilde{S}_k(\delta)\|\leq \tUp$, and the Schur complement of the bottom right matrix $\tilde{R}_k(\delta)$ in  $\tilde{H}_k(\delta)$ in Algorithm \ref{alg:1} is $\delta I$. Thus, to show $\tilde{H}_k(\delta)$ is lower bounded, we  need only a lower bound for $\tilde{R}_k(\delta)$, which we provide in the following lemma.

\begin{lemma}\label{lem:9}
Consider Problem (\ref{equ:11}). Under Assumption \ref{ass:2}, we execute Algorithm~\ref{alg:1} with $\delta\in(0,\gamma)$. Then we have that $\tilde{R}_k(\delta)\succeq\gamma I\succ0$, $\forall k\in[N-1]$.	
\end{lemma}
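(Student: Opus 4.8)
\emph{Plan.} The strategy is to reduce the bound to the uniform SOSC (Assumption~\ref{ass:2}). The key observations are that $\tilde{R}_k(\delta)$ coincides with a Riccati pivot of a \emph{shifted} problem, that this pivot equals the optimal value of a sub-horizon QDP, and that every feasible trajectory of that sub-horizon QDP can be completed to a vector lying in the critical cone $\mC(\bz^0;\bd^0)$ --- on which Assumption~\ref{ass:2} applies with the \emph{same} constant $\gamma$ precisely because $Z$ has orthonormal columns.

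\emph{Step 1 (pass to the shifted data).} As in the proof of Theorem~\ref{thm:4}, set $Q_k^\delta = Q_k - \delta I$, $H_k^\delta = \begin{pmatrix} Q_k^\delta & S_k^T \\ S_k & R_k \end{pmatrix}$ for $k\in[N-1]$, and $H_N^\delta = Q_N - \delta I$. By \eqref{pequ:7} we have $Z^TH^\delta Z \succeq (\gamma-\delta)I \succ 0$, so Assumption~\ref{ass:1-2} holds for $H^\delta$; hence Corollary~\ref{cor:1} and Lemma~\ref{lem:5} apply to $H^\delta$, and, together with Claims~1--2 in the proof of Theorem~\ref{thm:4}, they give $\tilde{R}_k(\delta) = \tilde{R}_k^\delta(0) = W_k^\delta := R_k + B_k^T K_{k+1}^\delta B_k$, where $\{K_j^\delta\}_{j=k+1}^{N}$ are the cost-to-go matrices of the linear-term-free QDP over stages $k,\dots,N$ with stage data $H_i^\delta$ and terminal matrix $K_N^\delta = Q_N - \delta I$.

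\emph{Steps 2--3 (variational form, then embed into $\mC$).} By the dynamic-programming recursion behind Lemma~\ref{lem:4}(iii), applied to $H^\delta$ with $\bl = \0$ (so that the linear and constant terms vanish and $J_j^\delta(\bp) = \bp^T K_j^\delta \bp$ is the optimal tail cost from stage $j$), for every $\boldsymbol{v}\in\mR^{n_u}$,
\begin{align*}
\boldsymbol{v}^T W_k^\delta \boldsymbol{v} = \min\bigg\{ \sum_{i=k}^{N-1} \begin{pmatrix} \bp_i \\ \bq_i \end{pmatrix}^T H_i^\delta \begin{pmatrix} \bp_i \\ \bq_i \end{pmatrix} + \bp_N^T (Q_N - \delta I)\bp_N \bigg\},
\end{align*}
the minimum being over trajectories with $\bp_{i+1} = A_i\bp_i + B_i\bq_i$ for $k \leq i \leq N-1$, $\bp_k = \0$, and $\bq_k = \boldsymbol{v}$ held fixed. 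Since $H_i^\delta = H_i - \delta\,\diag(I,\0)$ and $K_N^\delta = Q_N - \delta I$, and since $\bp_k = \0$, the bracketed $\delta$-cost of any feasible trajectory equals its unshifted $H$-cost minus $\delta\sum_{i=k+1}^{N}\|\bp_i\|^2$. Completing such a trajectory to the full horizon by $\bp_i = \0$, $\bq_i = \0$ for $i < k$ produces a vector $\bz$ that satisfies $\bp_0 = \0$ and all the recursive constraints, hence lies in the null space of $G$, i.e., in $\mC(\bz^0;\bd^0)$, and whose $H$-cost equals $\bz^T H \bz$. Since $Z^TZ = I$, Assumption~\ref{ass:2} yields $\bz^T H \bz \geq \gamma\|\bz\|^2 = \gamma\big(\|\boldsymbol{v}\|^2 + \sum_{i=k+1}^{N}\|\bp_i\|^2 + \sum_{i=k+1}^{N-1}\|\bq_i\|^2\big)$, so the $\delta$-cost of the trajectory is at least $\gamma\|\boldsymbol{v}\|^2 + (\gamma-\delta)\sum_{i=k+1}^{N}\|\bp_i\|^2 + \gamma\sum_{i=k+1}^{N-1}\|\bq_i\|^2 \geq \gamma\|\boldsymbol{v}\|^2$, where $\delta < \gamma$ is used. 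Minimizing over trajectories gives $\boldsymbol{v}^T W_k^\delta \boldsymbol{v} \geq \gamma\|\boldsymbol{v}\|^2$ for all $\boldsymbol{v}$, i.e., $\tilde{R}_k(\delta) = W_k^\delta \succeq \gamma I$.

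\emph{Main obstacle.} The delicate part is Step~3: one must verify that the zero-padded sub-horizon trajectory genuinely lands in the critical cone --- so that Assumption~\ref{ass:2} can be invoked in the \emph{orthonormal} basis $Z$, which is what makes the bare constant $\gamma$ (rather than some parametrization-dependent quantity) appear --- and that the shift correction $-\delta\sum_{i > k}\|\bp_i\|^2$ is absorbed \emph{term by term} into the slack of $\gamma\|\bz\|^2$. This term-by-term cancellation is exactly where the hypothesis $\delta < \gamma$ is used, and it is what sharpens the crude estimate $W_k^\delta \succeq (\gamma-\delta)I$ --- which would follow from feeding $Z^T H^\delta Z \succeq (\gamma-\delta)I$ directly into the same argument --- to the advertised $\gamma I$. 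Everything else is routine bookkeeping.
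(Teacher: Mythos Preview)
Your proposal is correct and follows essentially the same approach as the paper's proof. Both arguments identify $\tilde{R}_k(\delta)$ with the Riccati pivot $W_k^\delta$ of the $\delta$-shifted data, zero-pad a sub-horizon trajectory starting at $(\bp_k,\bq_k)=(\0,\boldsymbol{v})$ to produce a vector in the null space of $G$, and then invoke Assumption~\ref{ass:2} (using $Z^TZ=I$) to get the lower bound $\gamma\|\boldsymbol{v}\|^2$ after absorbing the $-\delta\sum_{i>k}\|\bp_i\|^2$ correction; the only cosmetic difference is that the paper explicitly builds the optimal tail trajectory via the feedback of Lemma~\ref{lem:4} and then computes its cost, whereas you phrase the same step as a variational characterization of $\boldsymbol{v}^TW_k^\delta\boldsymbol{v}$ and bound every feasible trajectory from below.
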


\begin{proof}

We use a technique similar to that in the proof of Lemma \ref{lem:7} and Theorem \ref{thm:4}. For any $k\in[N-1]$ and $\delta\in(0,\gamma)$, we define $Q_i^\delta = Q_i - \delta I, i\geq k$. For any $\bar{\bq}_k \in \mR^{n_u}$, consider the following QDP:
\begin{subequations}\label{pequ:18}
\begin{align}
\min_{\substack{\bp_{k+1:N}\\ \bq_{k+1:N-1}}} \text{\ \ } & \sum_{i=k+1}^{N-1}\begin{pmatrix}
\bp_i \\ \bq_i
\end{pmatrix}^T\begin{pmatrix}
Q_i^\delta & S_i^T\\
S_i & R_i
\end{pmatrix}\begin{pmatrix}
\bp_i \\ \bq_i
\end{pmatrix}+\bp_{N}^TQ_N^\delta\bp_{N}, \label{pequ:18a}\\
\text{s.t.} \text{\ \ \ } & \bp_{i+1}=A_i\bp_i+B_i\bq_i,\ \ \ \ \ \text{for }k+1 \leq i\leq N-1 \label{pequ:18b}\\
&\bp_{k+1} = B_k\bar{\bq}_k. \label{pequ:18c}
\end{align}
\end{subequations}
The objective function (\ref{pequ:18a}) can be rewritten as
\begin{multline}\label{pequ:16}
\sum_{i=k+1}^{N-1}\begin{pmatrix}
\bp_i \\ \bq_i
\end{pmatrix}^T\begin{pmatrix}
Q_i^\delta & S_i^T\\
S_i & R_i
\end{pmatrix}\begin{pmatrix}
\bp_i \\ \bq_i
\end{pmatrix}+\bp_{N}^TQ_N^\delta\bp_{N} \\
= \sum_{i=k+1}^{N-1}\begin{pmatrix}
\bp_i \\ \bq_i
\end{pmatrix}^T\begin{pmatrix}
Q_i & S_i^T\\
S_i & R_i
\end{pmatrix}\begin{pmatrix}
\bp_i \\ \bq_i
\end{pmatrix}+\bp_{N}^TQ_N\bp_{N} - \delta \|\bp_{k+1:N}\|^2.
\end{multline}
Let us define a special point $(\bar{\bp}, \bar{\bq})$ as follow: For $i\in[k-1]$, let $(\bar{\bp}_i, \bar{\bq}_i) = (\0,\0)$; let $(\bar{\bp}_k, \bar{\bq}_k) = (\0,\bar{\bq}_k)$; for $i>k$, let $\bar{\bp}_i$ be computed recursively by using the constraint $\bp_i = A_{i-1}\bp_{i-1}+B_{i-1}\bq_{i-1}$, and let $\bar{\bq}_i = \bq_i(\bar{\bp}_i)$, where $\bq_i(\cdot)$ is defined in (\ref{equ:12}) in Lemma \ref{lem:4} by replacing $\{Q_i\}_{i=0}^N$ by $\{Q_i^\delta\}_{i=0}^N$. Note that $\bq_i(\cdot)$ is well defined because $Z^TH^\delta Z\succ 0$, shown in (\ref{pequ:7}). By Assumption \ref{ass:2}, we have
\begin{align}\label{pequ:17}
\sum_{i=0}^{N-1}\begin{pmatrix}
\bar{\bp}_i \\ \bar{\bq}_i
\end{pmatrix}^T\begin{pmatrix}
Q_i & S_i^T\\
S_i & R_i
\end{pmatrix}&\begin{pmatrix}
\bar{\bp}_i \\ \bar{\bq}_i
\end{pmatrix}+\bar{\bp}_{N}^TQ_N\bar{\bp}_{N} \nonumber\\
= &\bar{\bq}^T_kR_k\bar{\bq}_k + \sum_{i = k+1}^{N-1}\begin{pmatrix}
\bar{\bp}_i \\ \bar{\bq}_i
\end{pmatrix}^T\begin{pmatrix}
Q_i & S_i^T\\
S_i & R_i
\end{pmatrix}\begin{pmatrix}
\bar{\bp}_i \\ \bar{\bq}_i
\end{pmatrix}+\bar{\bp}_{N}^TQ_N\bar{\bp}_{N} \nonumber\\
\geq &\gamma(\|\bar{\bp}\|^2+\|\bar{\bq}\|^2).
\end{align}
On the other hand, under our setup for the special point $(\bar{\bp},\bar{\bq})$ and from (\ref{equ:13}) in Lemma \ref{lem:4} (at stage $k+1$ with $\bl_i = \0, \; \forall i\geq k+1$), we know that $(\bar{\bp}_{k+1:N},\bar{\bq}_{k+1:N-1})$ has the objective value $\bar{\bp}_{k+1}^TK_{k+1}^{\delta}\bar{\bp}_{k+1}$ for Problem (\ref{pequ:18}), where $K_{k+1}^{\delta}$ denotes $K_{k+1}$ that is calculated by replacing $\{Q_i\}_{i= k+1}^N$ with $\{Q_i^\delta\}_{i= k+1}^N$. Further, from Lemma \ref{lem:5} and Claim 1 in Theorem \ref{thm:4}, we have that $K_{k+1}^{\delta} = \bar{Q}_{k+1}^\delta(0) = \bar{Q}_{k+1}(\delta)$. Therefore, combining (\ref{pequ:16}) and (\ref{pequ:17}), we  obtain the following: 
\begin{align*}
\gamma(\|\bar{\bp}\|^2+\|\bar{\bq}\|^2)&\stackrel{(\ref{pequ:17})}{\leq}     \bar{\bq}^T_kR_k\bar{\bq}_k + \sum_{i = k+1}^{N-1}\begin{pmatrix}
\bar{\bp}_i \\ \bar{\bq}_i
\end{pmatrix}^T\begin{pmatrix}
Q_i & S_i^T\\
S_i & R_i
\end{pmatrix}\begin{pmatrix}
\bar{\bp}_i \\ \bar{\bq}_i
\end{pmatrix}+\bar{\bp}_{N}^TQ_N\bar{\bp}_{N}\\
\stackrel{(\ref{pequ:16})}{=} & \bar{\bq}^T_kR_k\bar{\bq}_k + \delta\|\bar{\bp}_{k+1:N}\|^2+\sum_{i=k+1}^{N-1}\begin{pmatrix}
\bar{\bp}_i \\ \bar{\bq}_i
\end{pmatrix}^T\begin{pmatrix}
Q_i^\delta & S_i^T\\
S_i & R_i
\end{pmatrix}\begin{pmatrix}
\bar{\bp}_i \\ \bar{\bq}_i
\end{pmatrix}+\bar{\bp}_{N}^TQ_N^\delta\bar{\bp}_{N}\\
=&\bar{\bq}^T_kR_k\bar{\bq}_k + \delta\|\bar{\bp}_{k+1:N}\|^2 + \bar{\bp}_{k+1}^TK_{k+1}^{\delta}\bar{\bp}_{k+1}\\
=&\bar{\bq}^T_kR_k\bar{\bq}_k + \delta\|\bar{\bp}_{k+1:N}\|^2 + \bar{\bp}_{k+1}^T\bar{Q}_{k+1}(\delta)\bar{\bp}_{k+1}\\
= & \bar{\bq}^T_kR_k\bar{\bq}_k + \delta\|\bar{\bp}_{k+1:N}\|^2 + \bar{\bq}_{k}^TB_k^T\bar{Q}_{k+1}(\delta)B_k\bar{\bq}_{k}\\
= & \bar{\bq}^T_k\tilde{R}_k(\delta)\bar{\bq}_k + \delta\|\bar{\bp}_{k+1:N}\|^2\\
= &\bar{\bq}^T_k\tilde{R}_k(\delta)\bar{\bq}_k + \delta\|\bar{\bp}\|^2.
\end{align*}
Thus, if $\delta\in(0,\gamma)$, we have  
\begin{align*}
\bar{\bq}^T_k\tilde{R}_k(\delta)\bar{\bq}_k\geq \gamma\|\bar{\bq}\|^2 + (\gamma-\delta)\|\bar{\bp}\|^2\geq \gamma \|\bar{\bq}\|^2\geq \gamma \|\bar{\bq}_k\|^2.
\end{align*}
This concludes the proof.
\end{proof}

We summarize the lower-boundedness property of $\tilde{H}_k$ in the next theorem.

\begin{theorem}[Lower-bound-preserving property]\label{thm:6}
For Problem (\ref{equ:11}), if Assumptions \ref{ass:2} and \ref{ass:3} hold, we have, when executing Algorithm \ref{alg:1} with $\delta\in(0,\gamma)$, that $\forall k\in[N]$
\begin{align}\label{pequ:11}
\tilde{H}_k(\delta)\succeq \lambda_H I\succ 0,  \text{\ \ \ } \text{where} \text{\ \ } \lambda_H = \bigg(\frac{\gamma}{\gamma + \tilde{\Upsilon}}\bigg)^2\cdot\delta.
\end{align}
\end{theorem}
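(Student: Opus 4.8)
The plan is to assemble the statement from ingredients already established, reducing it to one application of the linear‑algebra Lemma~\ref{lem:8}(ii). Fix $k\in[N-1]$ and recall $\tilde{H}_k(\delta)=\begin{pmatrix}\tilde{Q}_k(\delta)&\tilde{S}_k^T(\delta)\\ \tilde{S}_k(\delta)&\tilde{R}_k(\delta)\end{pmatrix}$. The defining update $\tilde{Q}_k(\delta)=\tilde{S}_k^T(\delta)\tilde{R}_k^{-1}(\delta)\tilde{S}_k(\delta)+\delta I$ in Algorithm~\ref{alg:1} shows that the Schur complement of the $(2,2)$-block $\tilde{R}_k(\delta)$ inside $\tilde{H}_k(\delta)$ is \emph{exactly} $\delta I$. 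Theorem~\ref{thm:4} gives $\tilde{H}_k(\delta)\succ 0$, so the standing hypothesis of Lemma~\ref{lem:8}(ii) (symmetric positive definiteness) is satisfied; Lemma~\ref{lem:9} gives $\tilde{R}_k(\delta)\succeq\gamma I$; and Theorem~\ref{thm:5} gives $\|\tilde{S}_k(\delta)\|\leq\tUp$.

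Next I would invoke Lemma~\ref{lem:8}(ii) with the identification $A_k=\tilde{Q}_k(\delta)$, $B_k=\tilde{S}_k(\delta)$, $C_k=\tilde{R}_k(\delta)$ and the constants $\beta_C=\gamma$, $\beta_B=\tUp$, $\beta_S=\delta$. Since $\delta\in(0,\gamma)$ we have $\beta_S\wedge\beta_C=\delta$, and the lemma yields $\tilde{H}_k(\delta)\succeq\big(\gamma/(\gamma+\tUp)\big)^2\cdot\delta\, I=\lambda_H I$ for every $k\in[N-1]$. The terminal index $k=N$ is handled directly: Algorithm~\ref{alg:1} sets $\tilde{H}_N(\delta)=\tilde{Q}_N(\delta)=\delta I$, and since $\lambda_H=\big(\gamma/(\gamma+\tUp)\big)^2\delta\leq\delta$ this also satisfies $\tilde{H}_N(\delta)\succeq\lambda_H I$. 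Combining the two cases gives the claim for all $k\in[N]$.

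I do not expect a genuine obstacle inside this theorem. The substantive work — producing a lower bound on $\tilde{R}_k(\delta)$ that is uniform in both $k$ and $N$ — was already done in Lemma~\ref{lem:9} (via the controllability-based special feasible point), and the matching uniform upper bound on $\tilde{S}_k(\delta)$ in Theorem~\ref{thm:5}; the ``hard part'' of the whole section lives in those two results rather than here. The only items requiring care in the present proof are bookkeeping: verifying that the Schur complement equals $\delta I$ (and not merely dominates it), confirming positive definiteness of $\tilde{H}_k(\delta)$ before applying Lemma~\ref{lem:8}(ii), and observing $\lambda_H\leq\delta$ so that the terminal stage is covered by the same bound.
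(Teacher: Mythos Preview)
Your proposal is correct and follows essentially the same route as the paper: both identify the Schur complement of $\tilde{R}_k(\delta)$ in $\tilde{H}_k(\delta)$ as $\delta I$, invoke Lemma~\ref{lem:9} for $\tilde{R}_k(\delta)\succeq\gamma I$ and Theorem~\ref{thm:5} for $\|\tilde{S}_k(\delta)\|\leq\tUp$, apply Lemma~\ref{lem:8}(ii) with $\beta_S\wedge\beta_C=\delta$, and then note $\lambda_H\leq\delta$ to cover $k=N$. One small remark on your commentary: the special feasible point in Lemma~\ref{lem:9} is built from the uniform SOSC (Assumption~\ref{ass:2}) rather than controllability; the controllability argument lives in Lemma~\ref{lem:7}/Theorem~\ref{thm:5}.
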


\begin{proof}

For any $k\in[N-1]$, we know 
\begin{align*}
\tilde{H}_k(\delta) = \begin{pmatrix}
\tilde{Q}_k(\delta) & \tilde{S}_k^T(\delta)\\
\tilde{S}_k(\delta) & \tilde{R}_k(\delta) 
\end{pmatrix} = \begin{pmatrix}
\tilde{S}_k^T(\delta)\big(\tilde{R}_k(\delta) \big)^{-1}\tilde{S}_k(\delta) + \delta I &  \tilde{S}_k^T(\delta)\\
\tilde{S}_k(\delta) & \tilde{R}_k(\delta) 
\end{pmatrix}.
\end{align*}
Combining Theorem \ref{thm:5}, Lemma \ref{lem:8} (ii), and Lemma \ref{lem:9} and using $\delta\in(0, \gamma)$, we~have
\begin{align*}
\lambda_H = \bigg(\frac{\gamma}{\gamma + \tilde{\Upsilon}}\bigg)^2\cdot\delta.
\end{align*}
Note that $\lambda_H<\delta$. Thus, this bound also holds for $k = N$. 
\end{proof}

From Theorems \ref{thm:5} and \ref{thm:6}, we obtained that Algorithm \ref{alg:1} has a uniform boundedness-preserving property: when the problem is uniform upper bounded (i.e., Assumption \ref{ass:3} holds), the convexified problem is also uniform upper bounded. Moreover, if Assumption \ref{ass:2} holds even if Hessian matrices $H_k$ are indefinite, not only are the output Hessian matrices $\{\tilde{H}_k(\delta)\}_{k\in[N]}$ positive definite, as shown in Theorem \ref{thm:4},  but they also have global lower bounds away from zero independent of $N$. This observation will have important consequences in sensitivity analysis in section \ref{sec:6}. We further note that $\lambda_H$, the lower bound of $\tilde{H}(\delta)$ in (\ref{pequ:11}), increases with both $\gamma$ and $\delta$. In section \ref{sec:6} we will see that the larger the lower bound, the faster the decay rate.

\section{Exponential decay of NLDP sensitivities}\label{sec:6}

In this section, we  focus on bounding the magnitude of $\bp_k$ and $\bq_k$, the optimal solution of Problem \eqref{equ:11}. From Lemma \ref{lem:3}, we know that applying Algorithm \ref{alg:1} to convexify it will not change the {\red primal} solution. For a concise notation, we assume that Algorithm \ref{alg:1} has been applied and the resulting QDP is still denoted as \eqref{equ:11}; moreover, the quantities defining it also satisfy the various bounds proved in section \ref{sec:4} and section \ref{sec:5}. Thus we will abuse the notation of $H_k$ and $D_k$ to truly indicate $\tilde{H}_k(\delta)$ and $\tilde{D}_k(\delta)$ for some $\delta\in(0,\gamma)$.

The main result of this paper is that the components of the sensitivity have exponential decay with respect to $|k-i|$ when setting $\bl\in\Pi_i$ where $\forall i\in\{-1\}\cup[N-1]$
\begin{equation}\label{set:pi}
\begin{aligned}
\Pi_{-1} = &\{\bl\in\mR^{n_x+Nn_d}: \|\bl\| = 1, \text{\ \ } \text{supp}(\bl)\subseteq[1, n_x]\},\\
\Pi_i = &\{\bl\in\mR^{n_x+Nn_d}: \|\bl\| = 1, \text{\ \ } \text{supp}(\bl)\subseteq[n_x+in_d+1, n_x+(i+1)n_d]\}.
\end{aligned}
\end{equation}
Recall the perturbation path in (\ref{equ:5}). We know $\bl = (\bl_{-1}; \bl_0; \ldots; \bl_{N-1})\in\Pi_i$ indicates that the perturbation  occurs only at stage $i$ and, in particular,  $i=-1$ indicates the perturbation at initial conditions. The key tool is the explicit solution to Problem~\eqref{equ:11} by combining (\ref{equ:12}) in Lemma \ref{lem:4} and the  recursive (dynamics) constraints. Based on its closed form, we further take advantage of the boundedness property analyzed in section~\ref{sec:5} to derive the decay rate. All matrices defined in Lemma \ref{lem:4} are computed from the QDP convexified by Algorithm \ref{alg:1}.

The following lemma provides the closed form of optimal $\bp$ of Problem (\ref{equ:11}). We  also mention that earlier \cite{Xu2018Exponentially} (see Proposition 2.6) Xu and Anitescu proposed similar results for the case that $D_k = 0$ and $C_k = I$. Our result is a straightforward extension.

\begin{lemma}[Closed form of $\bp$]\label{lem:10}
Define $O_k = B_kW_k^{-1}B_k^T$, $\forall k\in[N-1]$. Then the optimal solution $\bp$ of Problem \eqref{equ:11} at stage $k\in[N]$ is given (with notations from Lemma \ref{lem:4}) by
\begin{align*}
\bp_k = \bigg(\prod_{i=0}^{k-1}E_i\bigg)\bl_{-1} + \sum_{i = 0}^{N-1}U_i^k\bl_i + \sum_{i=0}^{N-1}F_i^kC_i\bl_i,
\end{align*}
where for $\forall i\in[N-1], k \in[N]$,
\begin{align*}
U_i^k =& \sum_{s=0}^{i\wedge k-1}\bigg(\prod_{l = s+1}^{k-1}E_l\bigg)O_s(M_i^{s+1})^T-\bigg(\prod_{l = i+1}^{k-1}E_l\bigg)B_iW_i^{-1}D_{i2}^T\pmb{1}_{i+1\leq k},\\
F_i^k =& \sum_{s=0}^{i\wedge k-1}\bigg(\prod_{l = s+1}^{k-1}E_l\bigg)O_s(V_i^{s+1})^T+\bigg(\prod_{l = i+1}^{k-1}E_l\bigg)(I-O_iK_{i+1})\pmb{1}_{i+1\leq k}.
\end{align*}
\end{lemma}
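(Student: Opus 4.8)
The plan is to derive the closed form for $\bp$ by reverse-substituting the optimal control law \eqref{equ:12} from Lemma \ref{lem:4} into the dynamics \eqref{equ:11b} and unrolling the recursion forward from $\bp_0 = \bl_{-1}$. First I would substitute $\bq_s = \bq_s(\bp_s)$ from \eqref{equ:12} into $\bp_{s+1} = A_s\bp_s + B_s\bq_s + C_s\bl_s$ to obtain a closed-loop recursion of the form
\begin{align*}
\bp_{s+1} = E_s\bp_s + (I - O_sK_{s+1})C_s\bl_s - B_sW_s^{-1}D_{s2}^T\bl_s + B_sW_s^{-1}B_s^T\sum_{i=s+1}^{N-1}\big((M_i^{s+1})^T + (V_i^{s+1})^TC_i\big)\bl_i,
\end{align*}
using $E_s = A_s + B_sP_s$ and the definition $O_s = B_sW_s^{-1}B_s^T$. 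Here the term $A_sB_sW_s^{-1}B_s^TK_{s+1}C_s$ plus $C_s\bl_s$ needs to be reorganized carefully: one should check that $A_s\bp_s + B_sP_s\bp_s = E_s\bp_s$ absorbs the feedback part and the remaining $\bl_s$-coefficient is $C_s - B_sW_s^{-1}B_s^TK_{s+1}C_s - B_sW_s^{-1}D_{s2}^T = (I - O_sK_{s+1})C_s - B_sW_s^{-1}D_{s2}^T$, matching the claimed $F_s^{s+1}$ and the $D_{s2}$ piece of $U_s^{s+1}$.

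Next I would iterate this recursion: $\bp_k = \big(\prod_{l=0}^{k-1}E_l\big)\bl_{-1} + \sum_{s=0}^{k-1}\big(\prod_{l=s+1}^{k-1}E_l\big)\cdot(\text{forcing term at stage }s)$. Expanding the forcing term at stage $s$ splits the sum into three contributions: the $D_{s2}$ term, the $(I - O_sK_{s+1})C_s$ term, and the double sum over $i \ge s+1$ involving $O_s(M_i^{s+1})^T$ and $O_s(V_i^{s+1})^TC_i$. The first two contributions directly produce the $\pmb{1}_{i+1\le k}$ terms in $U_i^k$ and $F_i^k$ after relabeling $s = i$. For the double sum, I would swap the order of summation: $\sum_{s=0}^{k-1}\sum_{i=s+1}^{N-1}$ becomes $\sum_{i=1}^{N-1}\sum_{s=0}^{(i\wedge k)-1}$, since we need $s < i$ and $s \le k-1$. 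This yields exactly $\sum_{i}\big(\sum_{s=0}^{i\wedge k-1}(\prod_{l=s+1}^{k-1}E_l)O_s(M_i^{s+1})^T\big)\bl_i$ and the analogous $V$-term, matching the stated $U_i^k$ and $F_i^k$. Including $i=0$ in the outer sum is harmless since its inner sum is empty.

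The main obstacle I anticipate is bookkeeping consistency in the index ranges and the empty-product/empty-sum conventions — in particular making sure the $i \wedge k$ cutoffs, the indicator $\pmb{1}_{i+1\le k}$, and the convention $\prod_{l=m}^{n}E_l = I$ for $n < m$ all align so that edge cases ($k=0$, $k=N$, $i \ge k$, $i < k$) give the right formula. A secondary subtlety is verifying that \eqref{equ:12}, which expresses $\bq_s$ in terms of the cost-to-go objects $M_i^{s+1}, V_i^{s+1}, K_{s+1}, W_s$ valid under Assumption \ref{ass:1-2}, remains applicable here — but since we have assumed Algorithm \ref{alg:1} already convexified the problem so that $\tilde H_k(\delta) \succ 0$ (Theorem \ref{thm:4}) and hence the SOSC hypothesis of Lemma \ref{lem:4} holds, this is immediate. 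Once the algebra is assembled, the identification with the stated $U_i^k, F_i^k$ is a direct comparison of terms, so no deep idea beyond careful reorganization is needed.
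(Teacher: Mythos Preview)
Your proposal is correct and follows essentially the same route as the paper: substitute the optimal control \eqref{equ:12} into the dynamics to obtain the closed-loop recursion $\bp_{s+1}=E_s\bp_s+\text{(forcing terms)}$, unroll it from $\bp_0=\bl_{-1}$, and swap the order of the double sum $\sum_{s=0}^{k-1}\sum_{i=s+1}^{N-1}$ to collect coefficients of each $\bl_i$. The paper organizes the four forcing pieces as $\alpha_s^1,\dots,\alpha_s^4$ and verifies the base cases $k=0,1$ explicitly, but the algebra and the identification with $U_i^k,F_i^k$ are identical to what you outline; your cautions about the index conventions and empty sums/products are exactly the bookkeeping the paper carries out.
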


\begin{proof}
The argument holds for $k=0$ trivially. For $k=1$, using (\ref{equ:12}), we have
\begin{align*}
\bp_1 =& A_0\bp_0 + B_0\bq_0(\bp_0) + C_0\bl_0\\
=& A_0\bp_0 + B_0\bigg(P_0\bp_0 + W_0^{-1}B_0^T\sum_{i=1}^{N-1}(M_i^1)^T\bl_i + W_0^{-1}B_0^T\sum_{i=1}^{N-1}(V_i^1)^TC_i\bl_i\\
&-W_0^{-1}B_0^TK_1C_0\bl_0-W_0^{-1}D_{02}^T\bl_0\bigg) +C_0\bl_0\\
=& E_0\bl_{-1}+ O_0\sum_{i=1}^{N-1}(M_i^1)^T\bl_i + O_0\sum_{i=1}^{N-1}(V_i^1)^TC_i\bl_i -B_0W_0^{-1}D_{02}^T\bl_0+ (I-O_0K_1)C_0\bl_0,
\end{align*}
which is consistent with the argument at $k=1$. Generally, we have 
\begin{align}\label{pequ:100}
\bp_{k+1} =& A_k\bp_k + B_k\bq_k(\bp_k) + C_k\bl_k \nonumber\\
=& E_k\bp_k + B_k\bigg(W_k^{-1}B_k^T\sum_{i = k+1}^{N-1} (M_i^{k+1})^T\bl_i + W_k^{-1}B_k^T\sum_{i=k+1}^{N-1}(V_{i}^{k+1})^TC_i\bl_i \nonumber\\
&-W_k^{-1}B_k^TK_{k+1}C_k\bl_k-W_k^{-1}D_{k2}^T\bl_k\bigg)+C_k\bl_k \nonumber\\
=& E_k\bp_k + \underbrace{O_k\sum_{i = k+1}^{N-1} (M_i^{k+1})^T\bl_i}_{\alpha^1_k} + \underbrace{O_k\sum_{i=k+1}^{N-1}(V_{i}^{k+1})^TC_i\bl_i}_{\alpha^2_k} \underbrace{-B_kW_k^{-1}D_{k2}^T\bl_k}_{\alpha^3_k} \nonumber\\
&+\underbrace{(I-O_kK_{k+1})C_k\bl_k}_{\alpha^4_k} \nonumber\\
=& E_k\bp_k+\alpha^1_k+\alpha^2_k+\alpha^3_k+\alpha^4_k = E_kE_{k-1}\bp_{k-1} + \sum_{j=1}^4(\alpha^j_k+E_k\alpha^j_{k-1}) \nonumber\\
= & \cdots = (\prod_{i=0}^kE_i)\bp_0 + \sum_{j=1}^{4}\sum_{i = 0}^{k}(\prod_{l = i+1}^{k}E_l)\alpha^j_i \coloneqq (\prod_{i=0}^kE_i)\bp_0 + \sum_{j=1}^{4}\T_j.
\end{align}
For the terms $\T_3$ and $\T_4$, we have
\begin{equation}\label{pequ:20}
\begin{aligned}
\T_3 = &\sum_{i = 0}^{k}(\prod_{l = i+1}^{k}E_l)\alpha^3_i = -\sum_{i = 0}^{k}(\prod_{l = i+1}^{k}E_l)B_i^TW_i^{-1}D_{i2}^T\bl_i,\\
\T_4 = &\sum_{i = 0}^{k}(\prod_{l = i+1}^{k}E_l)\alpha^4_i = \sum_{i = 0}^{k}(\prod_{l = i+1}^{k}E_l)(I-O_iK_{i+1})C_i\bl_i.
\end{aligned}
\end{equation}
The terms $\T_1$ and $\T_2$, can be simplified analogously. We take $\T_1$ as an example. We have
\begin{align*}
\T_1 =&  \sum_{i = 0}^{k}(\prod_{l = i+1}^{k}E_l)\alpha^1_i = \alpha^1_k+E_k\alpha^1_{k-1}+...+E_kE_{k-1}...E_{1}\alpha^1_{0} \nonumber\\
= & O_k\sum_{i=k+1}^{N-1}(M_i^{k+1})^T\bl_i+E_kO_{k-1}\sum_{i=k}^{N-1}(M_i^{k})^T\bl_i+...+E_kE_{k-1}...E_{1}O_{0}\sum_{i=1}^{N-1}(M_i^{1})^T\bl_i \nonumber\\
= & E_kE_{k-1}...E_{1}O_{0}(M_{1}^{1})^T\bl_{1} +E_kE_{k-1}...E_{1}O_{0}(M_{2}^{1})^T\bl_{2}+E_kE_{k-1}...E_{2}O_{1}(M_{2}^{2})^T\bl_{2} \nonumber\\
&+E_kE_{k-1}...E_{1}O_{0}(M_{3}^{1})^T\bl_{3}+E_kE_{k-1}...E_{2}O_{1}(M_{3}^{2})^T\bl_{3}+E_kE_{k-1}...E_{3}O_{2}(M_{3}^{3})^T\bl_{3} \nonumber\\
&+\cdots \coloneqq \sum_{i=1}^{N-1}Z_i^{k+1}\bl_i,
\end{align*}
where $Z_i^{k+1}$ is defined as follows: $Z_i^{k+1}=\sum_{s=0}^{i-1}\big(\prod_{l=s+1}^{k}E_l\big)O_s(M_i^{s+1})^T$ for $1\leq i\leq k$; $Z_i^{k+1}=\sum_{s=0}^{k}\big(\prod_{l=s+1}^{k}E_l\big)O_s(M_i^{s+1})^T$ for $k+1\leq i\leq N-1$. So we get 
\begin{align}\label{pequ:24}
Z_i^{k+1}=\sum_{s=0}^{i\wedge(k+1)-1}\bigg(\prod_{l=s+1}^{k}E_l\bigg)O_s(M_i^{s+1})^T, \text{\ \ for\ } 1\leq i\leq N-1.
\end{align}
Similarly, we have $\T_2=\sum_{i=1}^{N-1}N_i^{k+1}C_i\bl_i$, where
\begin{align*}
N_i^{k+1}=\sum_{s=0}^{i\wedge (k+1)-1}\big(\prod_{l=s+1}^{k}E_l\big)O_s(V_i^{s+1})^T, \text{\ \ for\ } 1\leq i\leq N-1.
\end{align*}
From (\ref{pequ:20}) and (\ref{pequ:24}), we get
\begin{align}\label{pequ:27}
\T_1+\T_3 =& \sum_{i=1}^{N-1}Z_i^{k+1}\bl_i -\sum_{i = 0}^{k}(\prod_{l = i+1}^{k}E_l)B_i^TW_i^{-1}D_{i2}^T\bl_i \nonumber\\
=&\sum_{i=0}^{N-1}\bigg(Z_i^{k+1} - \big(\prod_{l = i+1}^{k}E_l\big)B_i^TW_i^{-1}D_{i2}^T\pmb{1}_{i\leq k}\bigg)\bl_i 
\coloneqq  \sum_{i=0}^{N-1}U_i^{k+1}\bl_i,
\end{align}
and
\begin{align}\label{pequ:28}
\T_2+\T_4 =& \sum_{i=1}^{N-1}N_i^{k+1}C_i\bl_i +\sum_{i = 0}^{k}(\prod_{l = i+1}^{k}E_l)(I-O_iK_{i+1})C_i\bl_i \nonumber\\
=&\sum_{i=0}^{N-1}\bigg(N_i^{k+1} + \big(\prod_{l = i+1}^{k}E_l\big)(I-O_iK_{i+1})\pmb{1}_{i\leq k}\bigg)C_i\bl_i \coloneqq  \sum_{i=0}^{N-1}F_i^{k+1}C_i\bl_i.
\end{align}
Taking (\ref{pequ:27}) and (\ref{pequ:28}) together and plugging back into (\ref{pequ:100}), we have
\begin{align*}
\bp_{k+1} =(\prod_{i=0}^kE_i)\bp_0 +  \sum_{i=0}^{N-1}U_i^{k+1}\bl_i+\sum_{i=0}^{N-1}F_i^{k+1}C_i\bl_i.
\end{align*}
We replace $k+1$ with $k$ and $\bp_0$ with $\bl_{-1}$ (initial conditions) and  conclude the proof.
\end{proof}

From this lemma, we see that $\bp_k$, $k\in[N]$, is a linear combination of $\{\bl_i\}_{i\in\{-1\}\cup[N-1]}$. To get the bound, we seek for $\bp_k$, we only need to analyze $U_i^k$ and $F_i^k$. One of their common terms is {\red the product of the closed-loop state transition matrix}, $\prod_{l = i}^{j}E_l$, which we now bound. {\red We first establish the relation of two consecutive cost-to-go matrices in next lemma.}

\begin{lemma}[{\red Riccati recursion}]\label{lem:11}
Following Definition (\ref{equ:lem4:1}) in Lemma \ref{lem:4}, we have $\forall k\in[N-1]$,
\begin{align*}
K_k = E_k^TK_{k+1}E_k + \begin{pmatrix}
I & P_k^T
\end{pmatrix}H_k\begin{pmatrix}
I\\
P_k
\end{pmatrix}.
\end{align*}
	
\end{lemma}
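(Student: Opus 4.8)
The plan is to verify this identity by direct algebraic manipulation, substituting the definitions of $E_k$, $P_k$, and $K_k$ from \eqref{equ:lem4:1} and expanding both sides. Since $H_k = \begin{pmatrix} Q_k & S_k^T \\ S_k & R_k \end{pmatrix}$ and $E_k = A_k + B_kP_k$, I would first expand the quadratic form on the right-hand side as
\begin{align*}
\begin{pmatrix} I & P_k^T \end{pmatrix} H_k \begin{pmatrix} I \\ P_k \end{pmatrix} = Q_k + S_k^TP_k + P_k^TS_k + P_k^TR_kP_k.
\end{align*}
Similarly, expanding $E_k^TK_{k+1}E_k = (A_k + B_kP_k)^TK_{k+1}(A_k + B_kP_k)$ gives four terms: $A_k^TK_{k+1}A_k$, $A_k^TK_{k+1}B_kP_k$, $P_k^TB_k^TK_{k+1}A_k$, and $P_k^TB_k^TK_{k+1}B_kP_k$.

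Next I would collect terms. Recall $W_k = R_k + B_k^TK_{k+1}B_k$ and $P_k = -W_k^{-1}(B_k^TK_{k+1}A_k + S_k)$, so $W_kP_k = -(B_k^TK_{k+1}A_k + S_k)$. The sum of the right-hand side is
\begin{align*}
Q_k + A_k^TK_{k+1}A_k + (S_k^T + A_k^TK_{k+1}B_k)P_k + P_k^T(S_k + B_k^TK_{k+1}A_k) + P_k^T(R_k + B_k^TK_{k+1}B_k)P_k.
\end{align*}
The last term is $P_k^TW_kP_k = -P_k^T(B_k^TK_{k+1}A_k + S_k)$, which exactly cancels the term $P_k^T(S_k + B_k^TK_{k+1}A_k)$. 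This leaves
\begin{align*}
Q_k + A_k^TK_{k+1}A_k + (S_k^T + A_k^TK_{k+1}B_k)P_k = Q_k + A_k^TK_{k+1}A_k - (B_k^TK_{k+1}A_k + S_k)^TW_k^{-1}(B_k^TK_{k+1}A_k + S_k),
\end{align*}
where I substituted $P_k = -W_k^{-1}(B_k^TK_{k+1}A_k + S_k)$ and used the symmetry of $W_k^{-1}$. This is precisely $K_k$ by \eqref{equ:lem4:1}, completing the proof.

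This is a routine verification with no real obstacle; the only point requiring a little care is tracking the transposes and ensuring the cross-term cancellation is exact, which hinges on the identity $W_kP_k = -(B_k^TK_{k+1}A_k + S_k)$ together with the symmetry of $K_{k+1}$ (inductively, $K_{k+1}$ is symmetric since $K_N = Q_N$ is and the recursion preserves symmetry). I would present the argument as a short chain of equalities rather than belaboring each step.
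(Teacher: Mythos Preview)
Your proposal is correct and takes essentially the same approach as the paper: both are direct algebraic verifications using the definitions of $E_k$, $P_k$, $W_k$ and the identity $W_kP_k = -(B_k^TK_{k+1}A_k + S_k)$. The only cosmetic difference is direction---the paper starts from the Riccati definition of $K_k$ and manipulates it into the stated form, whereas you expand the right-hand side and reduce it to $K_k$; the underlying computation is identical.
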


\begin{proof}

From (\ref{equ:lem4:1}), we have
\begin{align*}
K_k \stackrel{(\ref{equ:lem4:1})}{=} & Q_k + A_k^TK_{k+1}A_k-(B_k^TK_{k+1}A_k + S_k)^TW_k^{-1}(B_k^TK_{k+1}A_k + S_k)\\
= & Q_k + (E_k-B_kP_k)^TK_{k+1}(E_k-B_kP_k) + A_k^TK_{k+1}B_kP_k+S_k^TP_k\\
= & Q_k + E_k^TK_{k+1}E_k
-P_k^TB_k^TK_{k+1}E_k-E_k^TK_{k+1}B_kP_k\\
&+P_k^TB_k^TK_{k+1}B_kP_k + A_k^TK_{k+1}B_kP_k+S_k^TP_k\\
= & Q_k + E_k^TK_{k+1}E_k
-P_k^TB_k^TK_{k+1}E_k+S_k^TP_k\\
= &Q_k + E_k^TK_{k+1}E_k
-P_k^TB_k^TK_{k+1}A_k-P_k^TB_k^TK_{k+1}B_kP_k+S_k^TP_k\\
= &Q_k + E_k^TK_{k+1}E_k
-P_k^T(-W_kP_k-S_k)-P_k^TB_k^TK_{k+1}B_kP_k+S_k^TP_k\\
= &Q_k + E_k^TK_{k+1}E_k+P_k^TS_k + S_k^TP_k +
P_k^T(W_k-B_k^TK_{k+1}B_k)P_k\\
= &Q_k + E_k^TK_{k+1}E_k+P_k^TS_k + S_k^TP_k +
P_k^TR_kP_k\\
= & E_k^TK_{k+1}E_k+ \begin{pmatrix}
I & P_k^T
\end{pmatrix}H_k\begin{pmatrix}
I\\
P_k
\end{pmatrix}. 
\end{align*}
Here the second,  fifth, and sixth equalities are from the definition of $E_k$ and $P_k$ in Lemma~\ref{lem:4}, and the eighth equality is from the definition of $W_k$.
\end{proof}

From \eqref{cor:2}, we know that $\|K_k\|\leq \tilde{\Upsilon}_{\bar{Q}}$, $\forall k\in[N]$, for some constant $\tilde{\Upsilon}_{\bar{Q}}$ independent of $N$. Note that we distinguish $\tilde{\Upsilon}_{\bar{Q}}$ from ${\Upsilon}_{\bar{Q}}$ in (\ref{cor:2}) deliberately since our cost-to-go matrix $K_k$ in this section is calculated from the convexified problem, which still satisfies Assumptions \ref{ass:1-2} and \ref{ass:3}, as proved in Theorems \ref{thm:5} and \ref{thm:6}. Thus (\ref{cor:2}) holds for the convexified problem as well, although with a new bound. We use this result to bound the term $\prod_{l = i}^{j}E_l$.

\begin{lemma}\label{lem:12}
For any $0\leq i\leq j\leq N-1$, we have that $\|\prod_{l = i}^{j}E_l\|\leq \Upsilon_E\rho^{j-i+1}$ for some $\Upsilon_E$ and $\rho\in(0,1)$ independent from $N$.
\end{lemma}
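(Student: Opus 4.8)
The plan is to use $K_k$ as a Lyapunov certificate for the closed-loop transition matrices $E_k$, with the Riccati recursion of Lemma~\ref{lem:11} supplying the contraction. Write $\Phi_{i,j} \coloneqq \prod_{l=i}^{j}E_l = E_jE_{j-1}\cdots E_i$ for $0\le i\le j\le N-1$. Recall that throughout section~\ref{sec:6} the matrices $H_k$ denote the convexified data, so by Theorem~\ref{thm:6} we have $H_k\succeq\lambda_H I$ for all $k\in[N]$, and by the version of \eqref{cor:2} valid for the convexified problem we have $\|K_k\|\le\tilde{\Upsilon}_{\bar{Q}}$.

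First I would establish the two-sided, $N$-free bound $\lambda_H I\preceq K_k\preceq\tilde{\Upsilon}_{\bar{Q}}I$ for all $k\in[N]$. The upper bound is exactly \eqref{cor:2}. For the lower bound I argue by backward induction with Lemma~\ref{lem:11}: at stage $N$, $K_N = H_N = \delta I\succeq\lambda_H I$ because $\lambda_H<\delta$; assuming $K_{k+1}\succeq 0$, the term $E_k^TK_{k+1}E_k$ is positive semidefinite, while
\begin{align*}
\begin{pmatrix} I & P_k^T\end{pmatrix}H_k\begin{pmatrix} I\\ P_k\end{pmatrix}\ \succeq\ \lambda_H\begin{pmatrix} I & P_k^T\end{pmatrix}\begin{pmatrix} I\\ P_k\end{pmatrix}\ =\ \lambda_H(I+P_k^TP_k)\ \succeq\ \lambda_H I,
\end{align*}
so Lemma~\ref{lem:11} gives $K_k\succeq\lambda_H I$. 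Note this step is exactly where the uniform SOSC (Assumption~\ref{ass:2}), through $\lambda_H$, is used; with plain SOSC one only gets $K_k\succ 0$ without an $N$-independent floor.

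Next, rearranging Lemma~\ref{lem:11} and using the bounds just obtained,
\begin{align*}
E_k^TK_{k+1}E_k\ =\ K_k-\begin{pmatrix} I & P_k^T\end{pmatrix}H_k\begin{pmatrix} I\\ P_k\end{pmatrix}\ \preceq\ K_k-\lambda_H I\ \preceq\ \Big(1-\tfrac{\lambda_H}{\tilde{\Upsilon}_{\bar{Q}}}\Big)K_k,
\end{align*}
where the last inequality uses $I\succeq K_k/\tilde{\Upsilon}_{\bar{Q}}$. Set $\theta\coloneqq 1-\lambda_H/\tilde{\Upsilon}_{\bar{Q}}$; then $0\le\theta<1$ (strictly $<1$ since $\lambda_H>0$; if $\theta=0$ the recursion forces $E_k\equiv 0$ and the claim is trivial). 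Conjugating this one-step contraction repeatedly, a short induction on $j-i$ — using $A^TMA\preceq A^TM'A$ whenever $M\preceq M'$ — yields $\Phi_{i,j}^TK_{j+1}\Phi_{i,j}\preceq\theta^{\,j-i+1}K_i$ for all $0\le i\le j\le N-1$.

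Finally I would sandwich the product between the two-sided bounds on $K$: for any unit vector $\bp$,
\begin{align*}
\lambda_H\|\Phi_{i,j}\bp\|^2\ \le\ \bp^T\Phi_{i,j}^TK_{j+1}\Phi_{i,j}\bp\ \le\ \theta^{\,j-i+1}\,\bp^TK_i\bp\ \le\ \theta^{\,j-i+1}\,\tilde{\Upsilon}_{\bar{Q}},
\end{align*}
hence $\|\prod_{l=i}^{j}E_l\|=\|\Phi_{i,j}\|\le\sqrt{\tilde{\Upsilon}_{\bar{Q}}/\lambda_H}\,\big(\sqrt{\theta}\big)^{\,j-i+1}$, which is the stated bound with $\Upsilon_E=\sqrt{\tilde{\Upsilon}_{\bar{Q}}/\lambda_H}$ and $\rho=\sqrt{\theta}\in(0,1)$. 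Both constants depend only on $\gamma,\delta$ and the data bounds via $\lambda_H$ and $\tilde{\Upsilon}_{\bar{Q}}$, hence are independent of $N$. The only genuinely delicate point is the uniform-in-$N$ lower bound $K_k\succeq\lambda_H I$; once that is in hand, everything else is routine linear algebra on the Riccati recursion.
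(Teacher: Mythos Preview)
Your proof is correct and follows essentially the same Lyapunov argument as the paper: both use Lemma~\ref{lem:11} together with $H_k\succeq\lambda_H I$ and $\|K_k\|\le\tilde{\Upsilon}_{\bar{Q}}$ to show that the quadratic form $\bp^TK_k\bp$ contracts geometrically along the closed-loop dynamics, then sandwich the product between the two-sided bounds on $K_k$. The only minor differences are that you phrase the contraction as a matrix inequality and obtain the factor $\theta=1-\lambda_H/\tilde{\Upsilon}_{\bar{Q}}$, whereas the paper works with the scalar sequence $\bar{\bp}_j^TK_j\bar{\bp}_j$ and arrives at $(1+\lambda_H/\tilde{\Upsilon}_{\bar{Q}})^{-1}$; since $1-x\le (1+x)^{-1}$, your rate is even slightly sharper.
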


\begin{proof}

For any $\bar{\bp}_i\in\mR^{n_x}$, we recursively define $\bar{\bp}_{l+1} = E_l\bar{\bp}_l$ for $i\leq l\leq j$. Based on Lemma \ref{lem:11}, we have that
\begin{align}\label{pequ:29}
\bar{\bp}_j^TK_j\bar{\bp}_j = &\bar{\bp}_j^TE_j^TK_{j+1}E_j\bar{\bp}_j + \bar{\bp}_j^T\begin{pmatrix}
I & P_k^T
\end{pmatrix}H_k\begin{pmatrix}
I\\
P_k
\end{pmatrix}\bar{\bp}_j \geq\bp_j^T\begin{pmatrix}
I & P_k^T
\end{pmatrix}H_k\begin{pmatrix}
I\\
P_k
\end{pmatrix}\bp_j \nonumber\\
\stackrel{(1)}{\geq} &\lambda_H\|\begin{pmatrix}
I\\
P_k
\end{pmatrix}\bp_j\|^2 \geq \lambda_H\|\bp_j\|^2,
\end{align}
where inequality (1) is due to Theorem \ref{thm:6} and $\lambda_H$ is defined in (\ref{pequ:11}). Further, by Theorems \ref{thm:5}, \ref{thm:6}, we know the transformed QDP satisfies Assumptions \ref{ass:1-2} and \ref{ass:3}. So we can apply Lemma \ref{lem:7}, and a consequently equation (\ref{cor:2}) holds for the transformed QDP. Therefore, we know that the cost-to-go matrices $K_k$ (for transformed QDP) satisfy $\|K_k\|\leq \tUp_{\bar{Q}}$ for some constant $\tUp_{\bar{Q}}$. In particular, from the comment after the proof of Theorem \ref{thm:5} we know that the uniform upper bounds of the transformed problem do not depend on the lower bound $\gamma$, and as a result, neither does $\tUp_{\bar{Q}}$. From this observation and Lemma \ref{lem:11}, we have 
\begin{multline}\label{pequ:26}
\bar{\bp}_j^TK_j\bar{\bp}_j = \bar{\bp}_{j+1}^TK_{j+1}\bar{\bp}_{j+1} + \bar{\bp}_j^T\begin{pmatrix}
I & P_k^T
\end{pmatrix}H_k\begin{pmatrix}
I\\
P_k
\end{pmatrix}\bar{\bp}_j\\
\geq  \bar{\bp}_{j+1}^TK_{j+1}\bar{\bp}_{j+1} + \lambda_H\|\bar{\bp}_j\|^2\geq  (1+\frac{\lambda_H}{\tUp_{\bar{Q}}})\bar{\bp}_{j+1}^TK_{j+1}\bar{\bp}_{j+1}.
\end{multline}
Note that in the last inequality we use the fact that $\bar{\bp}_{j}^TK_{j}\bar{\bp}_{j}\geq \bar{\bp}_{j+1}^TK_{j+1}\bar{\bp}_{j+1}$.  We obtain
\begin{align*}
\|\prod_{l=i}^{j}E_l\bar{\bp}_i\|^2=&\|\bar{\bp}_{j+1}\|^2\stackrel{(\ref{pequ:29})}{\leq}\frac{1}{\lambda_H}\bar{\bp}_{j+1}^TK_{j+1}\bar{\bp}_{j+1}\stackrel{(\ref{pequ:26})}{\leq} \frac{1}{\lambda_H(1+\lambda_H/\tUp_{\bar{Q}})}\bar{\bp}_j^TK_j\bar{\bp}_j\\
\leq &  \frac{1}{\lambda_H}\bigg(\frac{1}{1+\lambda_H/\tUp_{\bar{Q}}}\bigg)^{j-i+1}\bp_i^TK_i\bp_i\leq\frac{\tUp_{\bar{Q}}}{\lambda_H}\left(\frac{1}{1+\lambda_H/\tUp_{\bar{Q}}}\right)^{j-i+1}\|\bp_i\|^2.
\end{align*}
Letting $\Upsilon_E = \sqrt{\tUp_{\bar{Q}}/\lambda_H}$ and $\rho = 1/\sqrt{1+\lambda_H/\tUp_{\bar{Q}}}$, we conclude the proof.
\end{proof}

From  Lemma \ref{lem:4} we can immediately see  that $E_k$ is the closed loop matrix; thus the quantity in the preceding lemma is nothing but the relationship between $\bp_i$ and $\bp_j$ at optimality. From the proof, we have that $\rho$ can be
\begin{align}\label{equ:10}
\Upsilon_E = \frac{\sqrt{\tUp_{\bar{Q}}}}{\sqrt{\lambda_H}}, \text{\ \ \ } \rho= \frac{\sqrt{\tUp_{\bar{Q}}}}{\sqrt{\tUp_{\bar{Q}} + \lambda_H}}.
\end{align}
Here $\lambda_H$ is from Theorem \ref{thm:6} and $\tUp_{\bar{Q}}$ is the upper bound of cost-to-go matrices $K_k$ for the transformed QP. Both of them are independent of the horizon length $N$. According to the discussions after the proof of Theorem \ref{thm:6}, we see that when $\gamma$ increases, the lower bound $\lambda_H$ will also increase and $\Upsilon_E$ and $\rho$ will hence decrease, which means that the larger the lower bound $\gamma$, the faster the decay. We will see this argument can also be applied for $\bp_k$ and $\bq_k$.

\begin{remark}
Under SOSC, setting $\gamma = \lambda_{\min}(Z^THZ)>0$ works for finite horizons but will result in a rate depending on $N$. In practice, if we want to use the transformed QDP to solve the original Problem (\ref{equ:11}), we should make $\delta$ close to $\gamma$ as much as possible because the lower bound of transformed QDP, if $\gamma$ is fixed, is proportional to $\delta$ (see Theorem \ref{thm:6}). 

\end{remark}

Using Lemma \ref{lem:12}, we can show the decay rate for $U_i^k$ and $F_i^k$,  defined in Lemma~\ref{lem:10}.

\begin{lemma}\label{lem:13}
For any $i\in[N-1]$, $k\in[N]$, we have
\begin{align*}
\max(\|U_i^k\|, \|F_i^k\|)\leq \Upsilon_{uf}\rho^{|i-k|},
\end{align*}
for some constant $\Upsilon_{uf}$ independent of $N$ and $\rho\in(0,1)$ from Lemma \ref{lem:12}.
\end{lemma}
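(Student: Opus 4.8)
The plan is to substitute into the explicit formulas for $U_i^k$ and $F_i^k$ from Lemma~\ref{lem:10} the decay estimate $\|\prod_{l=a}^{b}E_l\|\le\Upsilon_E\rho^{\,b-a+1}$ of Lemma~\ref{lem:12}, together with uniform (in $N$ and in the time index) bounds on the remaining factors, and then carefully count powers of $\rho$. First, working with the convexified data, Theorem~\ref{thm:5} gives $\|R_k\|,\|S_k\|,\|D_{k1}\|,\|D_{k2}\|\le\tUp$, equation~\eqref{cor:2} gives $\|K_k\|\le\tUp_{\bar Q}$, and the inequality~(\ref{pequ:29}) from the proof of Lemma~\ref{lem:12} (a reverse induction using Lemma~\ref{lem:11}) gives $K_k\succeq\lambda_H I\succ0$, in particular $K_k\succeq0$. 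Combined with $\tilde R_k(\delta)\succeq\gamma I$ from Lemma~\ref{lem:9}, this yields $W_k=\tilde R_k(\delta)+B_k^TK_{k+1}B_k\succeq\gamma I$, so $\|W_k^{-1}\|\le1/\gamma$; since also $\|A_k\|,\|B_k\|\le\Upsilon$, we obtain constants, independent of $N$ and of $k$, bounding $\|O_k\|=\|B_kW_k^{-1}B_k^T\|$, $\|P_k\|=\|W_k^{-1}(B_k^TK_{k+1}A_k+S_k)\|$, $\|I-O_kK_{k+1}\|$, and $\|B_kW_k^{-1}D_{k2}^T\|$. Feeding these together with Lemma~\ref{lem:12} into the definitions $M_i^{s+1}=-(D_{i1}+D_{i2}P_i)\prod_{j=s+1}^{i-1}E_j$ and $V_i^{s+1}=-K_{i+1}\prod_{j=s+1}^{i}E_j$, and into $\prod_{l=s+1}^{k-1}E_l$, I would record, for all $0\le s\le i-1$ and $0\le s\le k-1$ (empty products read as $I$, i.e. $\rho^0=1$, using $\Upsilon_E\ge1$), the estimates $\|M_i^{s+1}\|\le C_M\rho^{\,i-s-1}$, $\|V_i^{s+1}\|\le C_V\rho^{\,i-s}$, $\|\prod_{l=s+1}^{k-1}E_l\|\le\Upsilon_E\rho^{\,k-s-1}$, with $C_M,C_V$ independent of $N$.

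Next I would split on the indicator $\pmb{1}_{i+1\le k}$. If $i\ge k$, then $|i-k|=i-k$, the indicator vanishes, and $U_i^k=\sum_{s=0}^{k-1}\big(\prod_{l=s+1}^{k-1}E_l\big)O_s(M_i^{s+1})^T$; multiplying the three bounds, the $s$-th summand carries the power $\rho^{(k-s-1)+(i-s-1)}$, and after the substitution $a=k-1-s\ge0$ this exponent equals $(i-k)+2a$, so $\|U_i^k\|\le C\sum_{a\ge0}\rho^{\,(i-k)+2a}=\frac{C}{1-\rho^2}\rho^{\,|i-k|}$; the estimate for $F_i^k$ is identical except that the exponent is $(i-k)+1+2a\ge|i-k|$. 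If $i\le k-1$, then $|i-k|=k-i$ and the indicator equals $1$; the first sum now runs over $s=0,\dots,i-1$, the same substitution $a=i-1-s$ turns its exponent into $(k-i)+2a$, and the extra term $\big(\prod_{l=i+1}^{k-1}E_l\big)B_iW_i^{-1}D_{i2}^T$ (respectively with $I-O_iK_{i+1}$ for $F_i^k$) is bounded by a constant times $\rho^{\,k-i-1}\le\rho^{-1}\rho^{\,|i-k|}$, the factor $\rho^{-1}$ being an $N$-independent constant. Summing the two contributions and defining $\Upsilon_{uf}$ to be the largest of the constants that appear finishes the proof; since $\rho$ here is exactly the $\rho$ of Lemma~\ref{lem:12}, the lemma follows.

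Conceptually nothing is deep; the work is the bookkeeping in the second step — one has to verify that the geometric decay carried by $M_i^{s+1}$ (or $V_i^{s+1}$) lines up with that of $\prod_lE_l$ so that the \emph{total} exponent in every summand is $\ge|i-k|$ up to an additive constant, and one must handle the boundary indices $s=k-1$, $s=i-1$, and $i=k-1$ at which some products collapse to the identity. A small point to get right is $K_{k+1}\succeq0$ (hence $W_k\succeq\gamma I$), which I would obtain, as above, from Lemma~\ref{lem:11} and Theorem~\ref{thm:6}. All constants are uniform in $N$ precisely because every ingredient invoked (Theorems~\ref{thm:5} and~\ref{thm:6}, Lemmas~\ref{lem:9} and~\ref{lem:12}, and~\eqref{cor:2}) is.
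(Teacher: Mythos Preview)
Your proposal is correct and follows essentially the same route as the paper's proof: both bound $\|O_s\|$, $\|P_s\|$, $\|M_i^{s+1}\|$, $\|V_i^{s+1}\|$ uniformly using Theorem~\ref{thm:5}, Lemma~\ref{lem:9}, and \eqref{cor:2}, then feed Lemma~\ref{lem:12} into the formulas of Lemma~\ref{lem:10} and sum the resulting geometric series, splitting on $i\le k-1$ versus $i\ge k$. The only differences are presentational---you reindex via $a=(i\wedge k)-1-s$ whereas the paper evaluates the partial geometric sum directly---and you are more explicit about $K_{k+1}\succeq 0$ (which the paper uses tacitly in bounding $\|W_k^{-1}\|$).
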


\begin{proof}

We first bound some terms appeared in $U_i^k$ and $F_i^k$. From Assumption~\ref{ass:3}, equation (\ref{cor:2}), Theorem \ref{thm:5}, and Lemma \ref{lem:9}, we have $\forall s\in[N-1]$
\begin{equation}\label{pequ:200}
\begin{aligned}
\|O_s\| =& \|B_sW_s^{-1}B_s^T\|\leq \Upsilon^2\|(R_s+B_s^TK_{s+1}B_s)^{-1}\|\leq \Upsilon^2\|R_s^{-1}\|\leq \Upsilon^2/\gamma,\\
\|P_s\| =& \|W_s^{-1}(B_s^TK_{s+1}A_s+S_s)\|\leq \frac{1}{\gamma}(\Upsilon^2\tUp_{\bar{Q}}+\tUp)\coloneqq \Upsilon_P.
\end{aligned}
\end{equation}
Here all the matrices are calculated from the transformed problem as defined in~Lemma \ref{lem:4}, as stated in the beginning of this section. As with (\ref{equ:10}), we use $\tUp_{\bar{Q}}$ to denote the upper bound for $K_k$, to distinguish from the constant in (\ref{cor:2}). A direct result based on the boundedness of $O_s$ and $P_s$ is an upper bound for $M_i^s$ and $V_i^s$. We have
\begin{align}\label{pequ:300}
\|M_i^s\|\leq (1+\Upsilon_P)\Upsilon\Upsilon_E\rho^{i-s},\ \ \ \ \|V_i^s\|\leq \tUp_{\bar{Q}}\Upsilon_E\rho^{i-s+1}\ \ \ \text{for\ } i\geq s.
\end{align}
Here $\Upsilon_E$ and $\rho$ come from Lemma \ref{lem:12}. So, by definition of $U_i^k$, $F_i^k$ in Lemma \ref{lem:10}, we can get
\begin{align}\label{pequ:30}
\|U_i^k\|&\leq \sum_{s=0}^{i\wedge k-1}\bigg(\Upsilon_E\rho^{k-s-1}\frac{\Upsilon^2}{\gamma}(1+\Upsilon_P)\Upsilon\Upsilon_E\rho^{i-s-1}\bigg)+\Upsilon_E\rho^{k-i-1}\frac{\Upsilon^2}{\gamma}\pmb{1}_{i+1\leq k} \nonumber\\
&=\frac{(1+\Upsilon_P)\Upsilon_E^2\Upsilon^3}{\gamma}\sum_{s=0}^{i\wedge k-1}\rho^{k+i-2s-2}+\frac{\Upsilon_E\Upsilon^2}{\gamma}
\rho^{k-i-1}\pmb{1}_{i+1\leq k} \nonumber\\
& = \frac{(1+\Upsilon_P)\Upsilon_E^2\Upsilon^3}{\gamma}\rho^{k+i-2}\frac{1-(1/\rho^2)^{i\wedge k}}{1-1/\rho^2}+\frac{\Upsilon_E\Upsilon^2}{\gamma}
\rho^{k-i-1}\pmb{1}_{i+1\leq k}.
\end{align}
We separate equation (\ref{pequ:30}) in different cases. If $i+1\leq k$, we know
\begin{align}\label{pequ:31}
\|U_i^k\|\leq & \frac{(1+\Upsilon_P)\Upsilon_E^2\Upsilon^3}{\gamma}\rho^{k+i-2}\frac{1-(1/\rho^2)^{i}}{1-1/\rho^2}+\frac{\Upsilon_E\Upsilon^2}{\gamma}
\rho^{k-i-1} \nonumber\\
= & \frac{(1+\Upsilon_P)\Upsilon_E^2\Upsilon^3}{\gamma}\cdot\frac{\rho^{k-i}-\rho^{k+i}}{1-\rho^2}+\frac{\Upsilon_E\Upsilon^2}{\gamma\rho}
\rho^{k-i} \nonumber\\
\leq & \underbrace{\bigg(\frac{(1+\Upsilon_P)\Upsilon_E^2\Upsilon^3}{\gamma(1-\rho^2)}+\frac{\Upsilon_E\Upsilon^2}{\gamma\rho}\bigg)}_{\Upsilon_u}\rho^{k-i}.
\end{align}	
If $k\leq i$, we know
\begin{align}\label{pequ:32}
\|U_i^k\|\leq & \frac{(1+\Upsilon_P)\Upsilon_E^2\Upsilon^3}{\gamma}\rho^{k+i-2}\frac{1-(1/\rho^2)^{k}}{1-1/\rho^2}\leq \Upsilon_u \rho^{i-k}.
\end{align}
Combining equation (\ref{pequ:31}) with equation (\ref{pequ:32}), we have
\begin{align}\label{pequ:33}
\|U_i^k\|\leq \Upsilon_u \rho^{|i-k|}, \forall i\in[N-1], k\in[N].
\end{align}
Analogously, for $\|F_i^k\|$, we have
\begin{align*}
\|F_i^k\|&\leq \sum_{s=0}^{i\wedge k-1}\bigg(\Upsilon_E\rho^{k-s-1}\frac{\Upsilon^2}{\gamma}\tUp_{\bar{Q}}\Upsilon_E\rho^{i-s}\bigg)+\Upsilon_E\rho^{k-i-1}(1+\frac{\Upsilon^2}{\gamma}\tUp_{\bar{Q}})\pmb{1}_{i+1\leq k} \nonumber\\
&=\frac{\Upsilon^2\Upsilon_E^2\tUp_{\bar{Q}}}{\gamma}\rho^{k+i-1}\sum_{s=0}^{i\wedge k-1}(\frac{1}{\rho^2})^s+(\Upsilon_E+\frac{\Upsilon^2\Upsilon_E\tUp_{\bar{Q}}}{\gamma})\rho^{k-i-1}\pmb{1}_{i+1\leq k} \nonumber\\
& = \frac{\Upsilon^2\Upsilon_E^2\tUp_{\bar{Q}}}{\gamma}\rho^{k+i-1}\frac{1-(1/\rho^2)^{i\wedge k}}{1-1/\rho^2}+(\Upsilon_E+\frac{\Upsilon^2\Upsilon_E\tUp_{\bar{Q}}}{\gamma})\rho^{k-i-1}\pmb{1}_{i+1\leq k}.
\end{align*}
If $i+1\leq k$, we get
\begin{align}\label{pequ:34}
\|F_i^k\|\leq& \frac{\Upsilon^2\Upsilon_E^2\tUp_{\bar{Q}}}{\gamma}\rho^{k+i-1}\frac{1-(1/\rho^2)^{i}}{1-1/\rho^2}+(\Upsilon_E+\frac{\Upsilon^2\Upsilon_E\tUp_{\bar{Q}}}{\gamma})\rho^{k-i-1} \nonumber\\
\leq & \underbrace{\bigg(\frac{\Upsilon^2\Upsilon_E^2\tUp_{\bar{Q}}\rho}{\gamma(1-\rho^2)}+\frac{\Upsilon_E}{\rho}+ \frac{\Upsilon^2\Upsilon_E\tUp_{\bar{Q}}}{\gamma\rho}\bigg)}_{\Upsilon_f}\rho^{k-i}.
\end{align}
If $k\leq i$, we know
\begin{align}\label{pequ:35}
\|F_i^k\|\leq& \frac{\Upsilon^2\Upsilon_E^2\tUp_{\bar{Q}}}{\gamma}\rho^{k+i-1}\frac{1-(1/\rho^2)^{k}}{1-1/\rho^2}= \frac{\Upsilon^2\Upsilon_E^2\tUp_{\bar{Q}}\rho}{\gamma(1-\rho^2)}(\rho^{i-k}-\rho^{k+i})\leq \Upsilon_f\rho^{i-k}.
\end{align}
Combining (\ref{pequ:34}) and (\ref{pequ:35}), we have	
\begin{align}\label{pequ:36}
\|F_i^k\|\leq \Upsilon_f \rho^{|i-k|}, \forall i\in[N-1], k\in[N].
\end{align}
Thus, combining (\ref{pequ:33})  and (\ref{pequ:36}) and letting $\Upsilon_{uf} = \max(\Upsilon_u, \Upsilon_f)$, we complete the~proof.
\end{proof}

The next remark discusses the dependence of constant $\Upsilon_{uf}$ and $\rho$ on $\gamma$.

\begin{remark}\label{rem:3}

Let us use ``$\nearrow$" (``$\searrow$") to denote the increase (decrease) of parameters. Combining the expression in (\ref{pequ:11}), (\ref{equ:10}), (\ref{pequ:31}) and (\ref{pequ:34}), we have that $\frac{1}{\gamma(1-\rho^2)}, \frac{\Upsilon_E}{\rho}, \frac{1}{\gamma\rho}$ all decrease when $\gamma$ increases. Therefore, we know
\begin{align*}
\gamma \nearrow \Longrightarrow \lambda_H \nearrow \Longrightarrow \rho, \Upsilon_E, \Upsilon_u, \Upsilon_f, \Upsilon_P \searrow \Longrightarrow \Upsilon_{uf} \searrow.
\end{align*}
All other upper bounds such as $\tUp_{\bar{Q}}, \Upsilon, \tilde{\Upsilon}$ are independent from $\gamma$. 	
\end{remark}

Based on the previous analysis, we can show the exponential decay for directional derivatives $(\bp_k,\bq_k)$. We use  their explicit form in Lemma \ref{lem:4} and Lemma~\ref{lem:10} combining with the upper bound in Lemma \ref{lem:12} and Lemma \ref{lem:13}. We summarize our main theoretical result in next theorem.

\begin{theorem}[Exponential decay for directional derivatives]\label{thm:7}
We consider the NLDP with only equality constraints as shown in Problem (\ref{pro:1}). Suppose we perturb the reference variable $\bd$ along direction $\bl$ as shown in (\ref{equ:5}), under Assumptions \ref{ass:2} and \ref{ass:3}. Then there exist parameters $\Upsilon_{pq}, \rho\in(0,1)$ independent from $N$ such that~$\forall k$
\begin{align*}
\max(\|\bp_k\|_2, \|\bq_k\|_2)\leq \Upsilon_{pq}\rho^k, \text{\ \ \ } \text{if} \text{\ \ } \bl\in \Pi_{-1},
\end{align*}
and
\begin{align*}
\max(\|\bp_k\|_2, \|\bq_k\|_2)\leq \Upsilon_{pq}\rho^{|k-i|},\text{\ \ \ } \text{if} \text{\ \ } \bl\in \Pi_i,\; \forall i\in[N-1],
\end{align*} 
where set $\Pi_i$ is defined in (\ref{set:pi}). Moreover, as $\gamma$ increases, $\Upsilon_{pq}$ and $\rho$ will decrease. Here $\bp_k$, $\bq_k$ are directional derivatives of $\bx_k$, $\bu_k$, respectively, defined as in (\ref{equ:6}).
\end{theorem}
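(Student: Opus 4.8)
The plan is to combine the closed-form expressions already derived for $\bp_k$ (Lemma \ref{lem:10}) and $\bq_k$ (equation \eqref{equ:12} in Lemma \ref{lem:4}) with the exponential-decay estimates on the building-block matrices established in Lemmas \ref{lem:12} and \ref{lem:13}. Since Lemma \ref{lem:3} guarantees that convexification via Algorithm \ref{alg:1} leaves the primal solution $\bp,\bq$ unchanged, and Theorems \ref{thm:5}--\ref{thm:6} guarantee the convexified data is uniformly bounded above and below, we may freely work with the transformed QDP; all matrices ($E_l$, $W_k$, $K_k$, $O_k$, $P_k$, $M_i^s$, $V_i^s$, $U_i^k$, $F_i^k$, and the bound $\tUp$ on $D_k$) then satisfy the $N$-independent bounds proved in section \ref{sec:5}.

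First I would handle $\bp_k$. Fix a direction $\bl\in\Pi_i$, so that $\bl_j=\0$ for all $j\neq i$ (with $j$ ranging over $\{-1\}\cup[N-1]$), and $\|\bl_i\|=\|\bl\|=1$. By Lemma \ref{lem:10}, when $i=-1$ only the term $\big(\prod_{l=0}^{k-1}E_l\big)\bl_{-1}$ survives, which Lemma \ref{lem:12} bounds by $\Upsilon_E\rho^{k}$; when $i\in[N-1]$ only the single term $U_i^k\bl_i+F_i^k C_i\bl_i$ survives, and Lemma \ref{lem:13} together with $\|C_i\|\leq\Upsilon$ gives $\|\bp_k\|\leq (1+\Upsilon)\Upsilon_{uf}\rho^{|k-i|}$. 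So the bound on $\bp_k$ is immediate once the right direction is plugged in.

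Next I would handle $\bq_k$ using \eqref{equ:12}. For $\bl\in\Pi_i$ with $i\geq 0$, after writing $\bq_k=\bq_k(\bp_k)=P_k\bp_k+(\text{terms in }\bl)$, I would bound $P_k\bp_k$ by $\Upsilon_P$ times the $\bp_k$ bound just obtained, and then bound each remaining sum term-by-term: the summands involve $W_k^{-1}B_k^T(M_i^{k+1})^T\bl_i$, $W_k^{-1}B_k^T(V_i^{k+1})^TC_i\bl_i$ (nonzero only when $i\geq k+1$, contributing $\rho^{i-k-1}\lesssim\rho^{|i-k|}/\rho$ via \eqref{pequ:300}), and $W_k^{-1}B_k^TK_{k+1}C_k\bl_k$, $W_k^{-1}D_{k2}^T\bl_k$ (nonzero only when $i=k$, hence $\rho^{0}=\rho^{|i-k|}$). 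Using $\|W_k^{-1}\|\leq 1/\gamma$, $\|B_k\|,\|C_k\|\leq\Upsilon$, $\|K_{k+1}\|\leq\tUp_{\bar Q}$, $\|D_{k2}\|\leq\tUp$, and the $M,V$ decay from \eqref{pequ:300}, each piece is $O(\rho^{|k-i|})$ with an $N$-independent constant. For $i=-1$, only the $P_k\bp_k$ term is nonzero, giving $\|\bq_k\|\leq\Upsilon_P\Upsilon_E\rho^{k}$. Collecting all constants into a single $\Upsilon_{pq}$ and using the same $\rho$ from Lemma \ref{lem:12} finishes both inequalities; the monotonicity claim ("as $\gamma$ increases, $\Upsilon_{pq}$ and $\rho$ decrease") follows directly from Remark \ref{rem:3} together with $\Upsilon_P\searrow$ as noted there.

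The main obstacle is bookkeeping rather than conceptual: one must carefully track which terms in the closed forms actually vanish when $\bl$ is supported on a single stage $i$, and verify that the indicator constraints ($i+1\leq k$ versus $k\leq i$) in the definitions of $U_i^k,F_i^k$ and in \eqref{equ:12} line up so that every surviving summand carries a power of $\rho$ equal to (or bounded by a constant multiple of) $\rho^{|k-i|}$ — including the boundary cases $i=k$ and $i=k-1$ where a stray factor of $\rho^{-1}$ appears and must be absorbed into the constant. Since $\rho\in(0,1)$ is fixed and $N$-independent, $\rho^{-1}$ is a harmless constant, so no difficulty arises; the only care needed is to ensure no hidden $N$-dependence sneaks in through the geometric sums, which is already guaranteed because Lemma \ref{lem:13} performed exactly those sums and produced $N$-independent $\Upsilon_u,\Upsilon_f$.
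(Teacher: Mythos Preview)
Your proposal is correct and follows essentially the same approach as the paper: you plug $\bl\in\Pi_i$ into the closed forms from Lemmas \ref{lem:10} and \ref{lem:4} so that only a single stage survives, bound $\bp_k$ via Lemmas \ref{lem:12}--\ref{lem:13}, split the bound on $\bq_k$ into the cases $i\leq k-1$, $i=k$, $i\geq k+1$ using $\|W_k^{-1}\|\leq 1/\gamma$ and \eqref{pequ:300}, and invoke Remark \ref{rem:3} for the monotonicity in $\gamma$. The only cosmetic difference is that the paper writes out the three $\bq_k$ cases explicitly with named constants $\Upsilon_{pq,1},\Upsilon_{pq,2}$, whereas you describe them by which summands survive; the content is identical.
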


\begin{proof}

When $\bl\in\Pi_{-1}$, by Lemma \ref{lem:10} and (\ref{equ:12}) in Lemma \ref{lem:4}, we have
\begin{align*}
\bp_k = \bigg(\prod_{i=0}^{k-1}E_i\bigg)\bl_{-1}, \text{\ \ \ } \bq_k = P_k\bp_k.
\end{align*}
Using Lemma \ref{lem:12} and (\ref{pequ:200}) in Lemma \ref{lem:13}, we have
\begin{align*}
\|\bp_k\|\leq \Upsilon_E\rho^k, \text{\ \ \ } \|\bq_k\|\leq \Upsilon_P\Upsilon_E\rho^k.
\end{align*}
Without loss of generality, we assume $\Upsilon_P>1$. So, we define $\Upsilon_{pq, 1} \coloneqq \Upsilon_P\Upsilon_E$ and finish the first part of proof. When $\bl\in\Pi_i$, $\forall i\in[N-1]$, by Lemma \ref{lem:10}, we have
\begin{align*}
\bp_k = & U_i^k\bl_i+F_i^kC_i\bl_i,\ \ \ \forall k\in[N].
\end{align*}
Then, from Lemma \ref{lem:13} and Assumption \ref{ass:3}, we can get
\begin{align}\label{pequ:37}
\|\bp_k\|  = & \|U_i^k\bl_i+F_i^kC_i\bl_i\|\leq \|U_i^k\| + \Upsilon\|F_i^k\| \leq  (1+\Upsilon)\Upsilon_{uf}\rho^{|k-i|}\coloneqq \Upsilon_{p}\rho^{|k-i|}.
\end{align}
Moreover, we get the upper bound $\bq_k$ using (\ref{equ:12}) in Lemma \ref{lem:4}. If $i\leq k-1$, we have
\begin{align}\label{pequ:38}
\|\bq_k\| = \|P_k\bp_k\|\leq \Upsilon_{p}\Upsilon_P\rho^{k-i}.
\end{align}
If $i = k$, we have
\begin{align}\label{pequ:39}
\|\bq_k\| =& \|P_k\bp_k-W_k^{-1}(B_k^TK_{k+1}C_k + D_{k2}^T)\bl_k\|
\leq \Upsilon_{p}\Upsilon_P+ \frac{\Upsilon^2\tUp_{\bar{Q}}+\Upsilon}{\gamma}.
\end{align}
where the inequality is due to the triangle inequality and $\|W_k^{-1}\|\leq \frac{1}{\gamma}$. If $i\geq k+1$,~then
\begin{align}\label{pequ:40}
\|\bq_k\| = & \|P_k\bp_k + W_k^{-1}B_k^{T}(M_i^{k+1})^T\bl_i+W_k^{-1}B_k^T(V_i^{k+1})^TC_i\bl_i\| \nonumber\\
\stackrel{(\ref{pequ:300})}{\leq} & \Upsilon_{p}\Upsilon_P\rho^{i-k} + \frac{\Upsilon}{\gamma}\bigg((1+\Upsilon_P)\Upsilon\Upsilon_E\rho^{i-k-1} + \Upsilon\tUp_{\bar{Q}}\Upsilon_E\rho^{i-k}\bigg) \nonumber\\
= &\bigg(\Upsilon_{p}\Upsilon_P + \frac{(1+\Upsilon_P+\rho \tUp_{\bar{Q}})\Upsilon_E\Upsilon^2}{\gamma\rho}\bigg)\rho^{i-k}
\end{align}
Combining (\ref{pequ:37}), (\ref{pequ:38}), (\ref{pequ:39}), (\ref{pequ:40}) and defining
\begin{align}\label{pequ:23}
\Upsilon_{pq, 2} = \Upsilon_{p}\Upsilon_P + \frac{(1+\Upsilon_P+\rho \tUp_{\bar{Q}})\Upsilon_E\Upsilon^2}{\gamma\rho}+\frac{\Upsilon^2\tUp_{\bar{Q}}+\Upsilon}{\gamma} > \Upsilon_{p},
\end{align}
 we get
\begin{align*}
\max(\|\bp_k\|, \|\bq_k\|)\leq \Upsilon_{pq, 2}\rho^{|i-k|}.
\end{align*}
This finishes the proof of the second part. To summarize, we complete the proof by defining $\Upsilon_{pq} = \max(\Upsilon_{pq, 1}, \Upsilon_{pq, 2})$. From Remark \ref{rem:3} and the definition of $\Upsilon_{pq, 1}$ and $\Upsilon_{pq, 2}$, we can  see that $\Upsilon_{pq}$ and $\rho$ will decrease when $\gamma$ increases.
\end{proof}

From the main theorem above, we see that the decay rate $\rho$ is the same as the one in Lemma \ref{lem:12}. From (\ref{pequ:11}) and (\ref{equ:10}), we know
\begin{align*}
\rho = \frac{\sqrt{\tUp_{\bar{Q}}}}{\sqrt{\tUp_{\bar{Q}}+\big(\frac{\gamma}{\gamma+\tUp}\big)^2\cdot\delta}}.
\end{align*}
To clarify, although the sensitivity decay rate of an NLDP should not depend on $\delta$, which is only a hyperparameter of the convexification algorithm, the above rate is our provable rate, and $\delta$ is treated as a parameter. Throughout the proof, we rely on setting $\delta\in(0,\gamma)$ to get uniform boundedness of the matrices. From a practical perspective, the suggested value is $\delta\approx \gamma$.

\section{Numerical experiment}\label{sec:7}

We conducted a numerical experiment to verify our theoretical result in Theorem \ref{thm:7}. For simplicity, we  chose a moderate temporal length and set $n_x = n_u = 1$. In particular, we studied the following nonconvex QDP:
\begin{subequations}\label{equ:14}
\begin{align}
\min_{\bx, \bu}\text{\ \ } & \sum_{k=0}^{N-1}\mu_1 (u_k-d_k)^2 - \mu_2 (x_k-d_k)^2 - \mu_2x_N^2, \label{equ:14a}\\
\text{ s.t.}\text{\ \ \ } & x_{k+1} = u_k+f(d_k),\ \ \forall k\in[N-1], \label{equ:14b}\\
&x_0 = 0. \label{equ:14c}
\end{align}
\end{subequations}
Here we assume that $f$ is any function such that $f(0)=0$. Note that we still have  coupled KKT conditions for our toy example. Suppose the unperturbed reference variable is $\bd^ 0 = (d_0,..., d_{N-1})=\0$. Then the unperturbed problem has a unique minimizer that is $x_k = u_k =0$, $\forall k$, provided $\mu_1>\mu_2>0$. Furthermore,  we can check that Assumption \ref{ass:2} holds with $\gamma =\mu_1 - \mu_2$  and that Assumption \ref{ass:3} (i) holds with $\Upsilon = \max(1, \mu_1, \mu_2, f'(0))$, and therefore the controllability condition in Assumption \ref{ass:3} (ii) holds with $t_k = t = \lambda_C = 1$.

\noindent{\bf Simulation setting:} We set $N = 40, 60$, $\mu = (\mu_1, \mu_2) = (10, 1), (50, 10), (100, 15)$, and we let $f(x) = x$ or $\exp(x)-1$. We  perturb only $\bd$ at the middle stage but with three different scales: $\epsilon = 1, 0.1, 0.01$. Note that only if $\epsilon\rightarrow 0$ can we approximate the directional derivatives. Because the unperturbed solution is all zeros, we plot $\log (|x_k^*(\epsilon)|/\epsilon)$ v.s. $k$, for which we expect to observe a linear decay. All optimization problems are solved by using the JuMP package \cite{Dunning2017JuMP} in Julia 0.6.4.1.

Simulation results are shown in Figure \ref{fig:1}. We see that the results all have an obvious linear decay trend, which is consistent with Theorem \ref{thm:7}.

\begin{figure}[!htp]
\centering
%
	\subfloat[$N=40$, $\mu = (10,1)$]{\label{w4}\includegraphics[width=4.7cm,height=4.8cm]{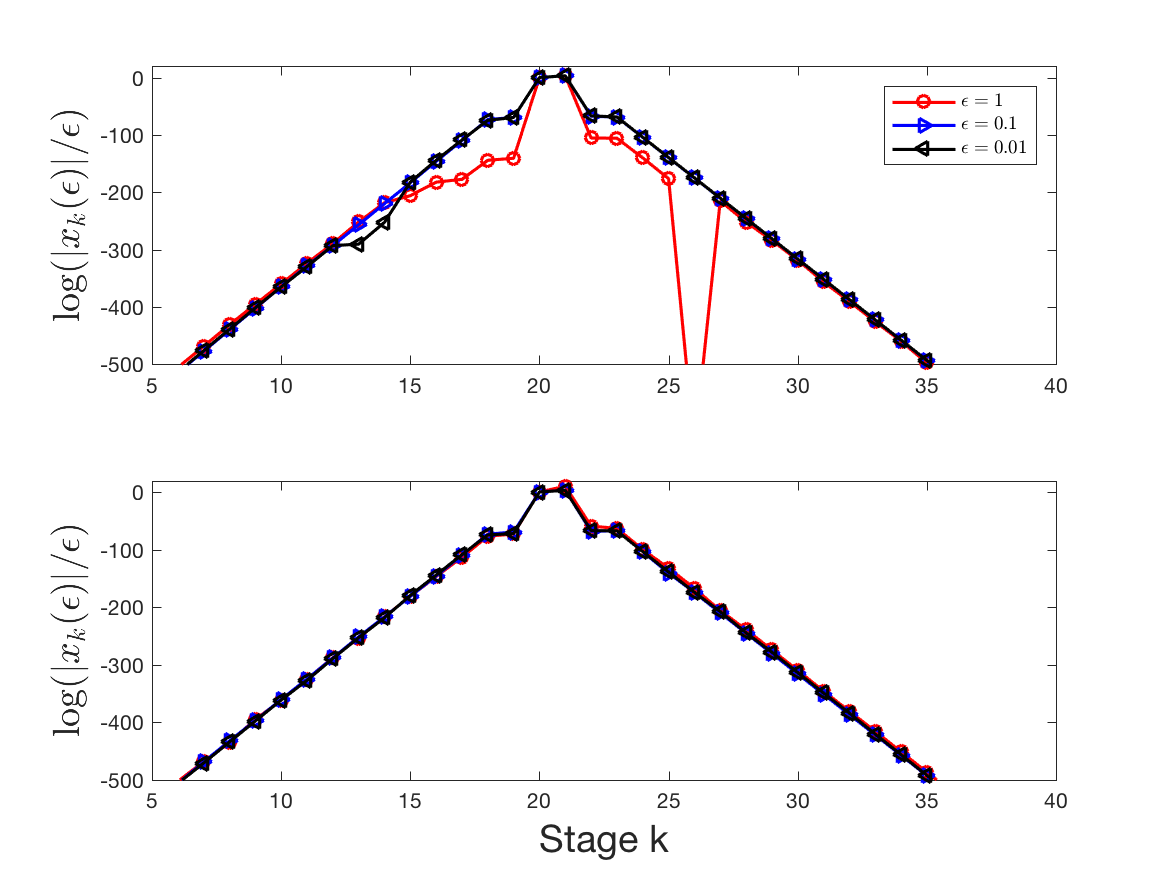}}
	\subfloat[$N=40$, $\mu = (50,10)$]{\label{w5}\includegraphics[width=4.7cm,height=4.8cm]{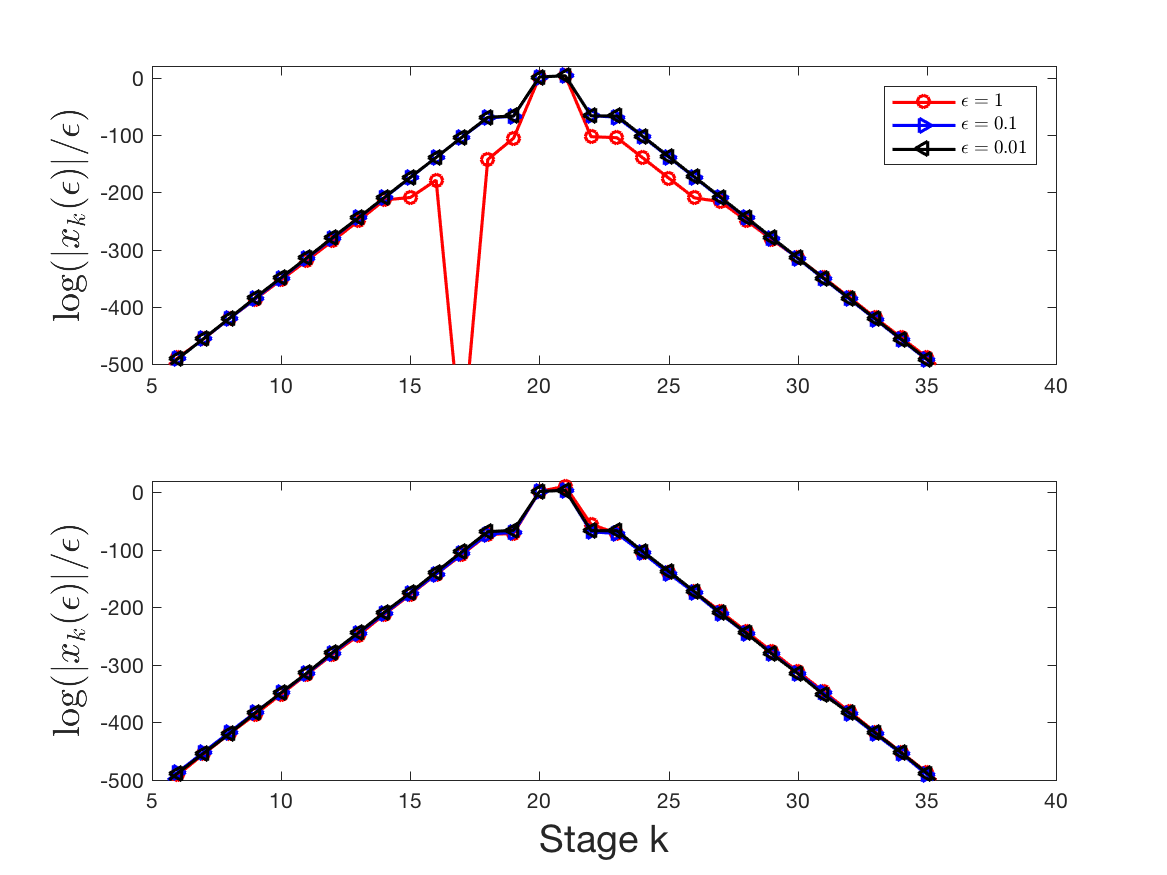}}
	\subfloat[$N=40$, $\mu = (100,15)$]{\label{w6}\includegraphics[width=4.7cm,height=4.8cm]{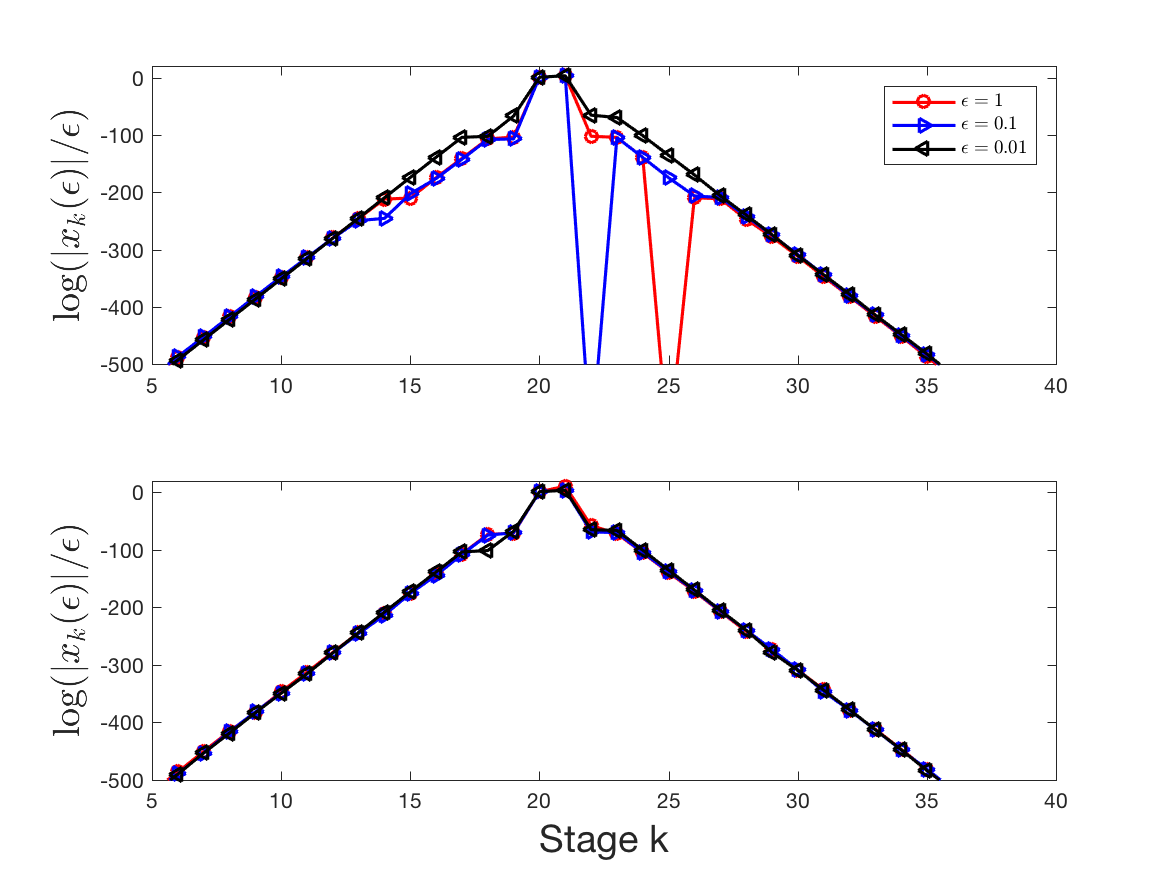}}
	
	\subfloat[$N=60$, $\mu = (10,1)$]{\label{w7}\includegraphics[width=4.7cm,height=4.8cm]{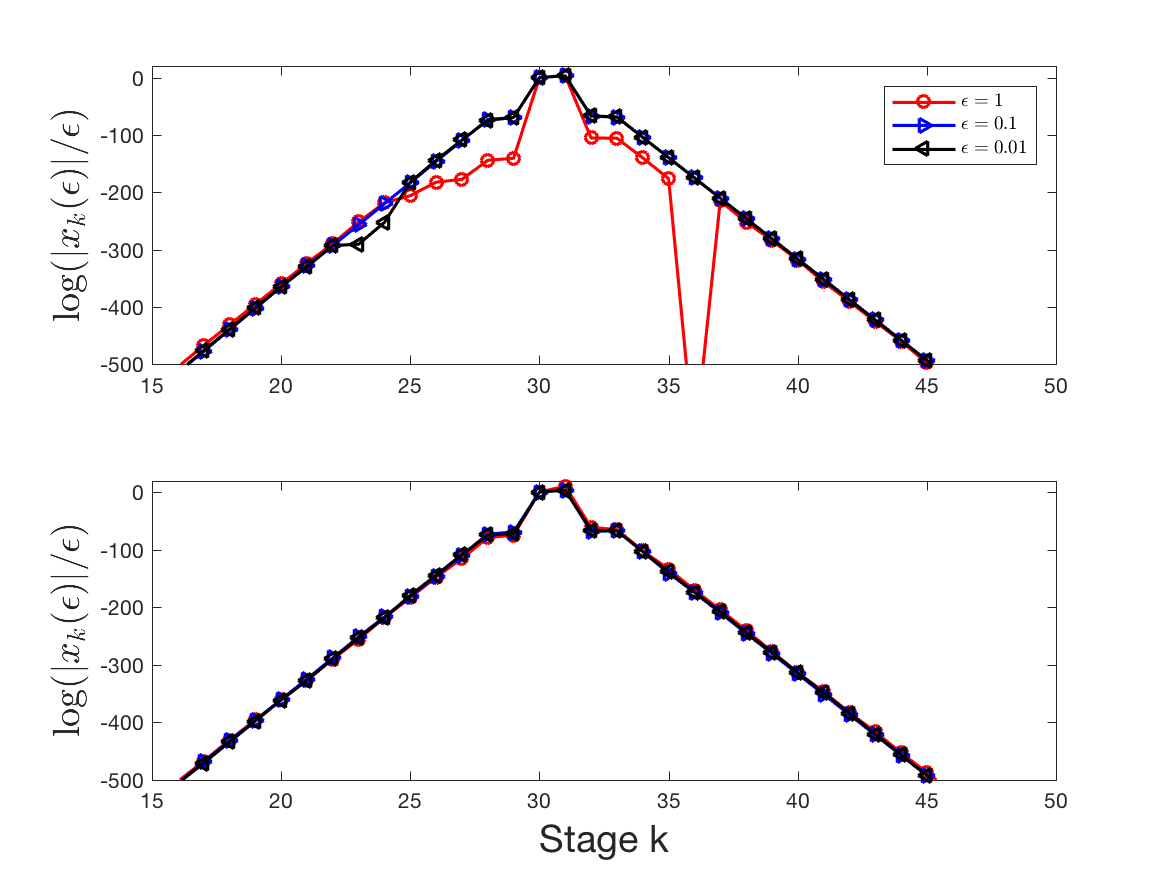}}
	\subfloat[$N=60$, $\mu = (50,10)$]{\label{w8}\includegraphics[width=4.7cm,height=4.8cm]{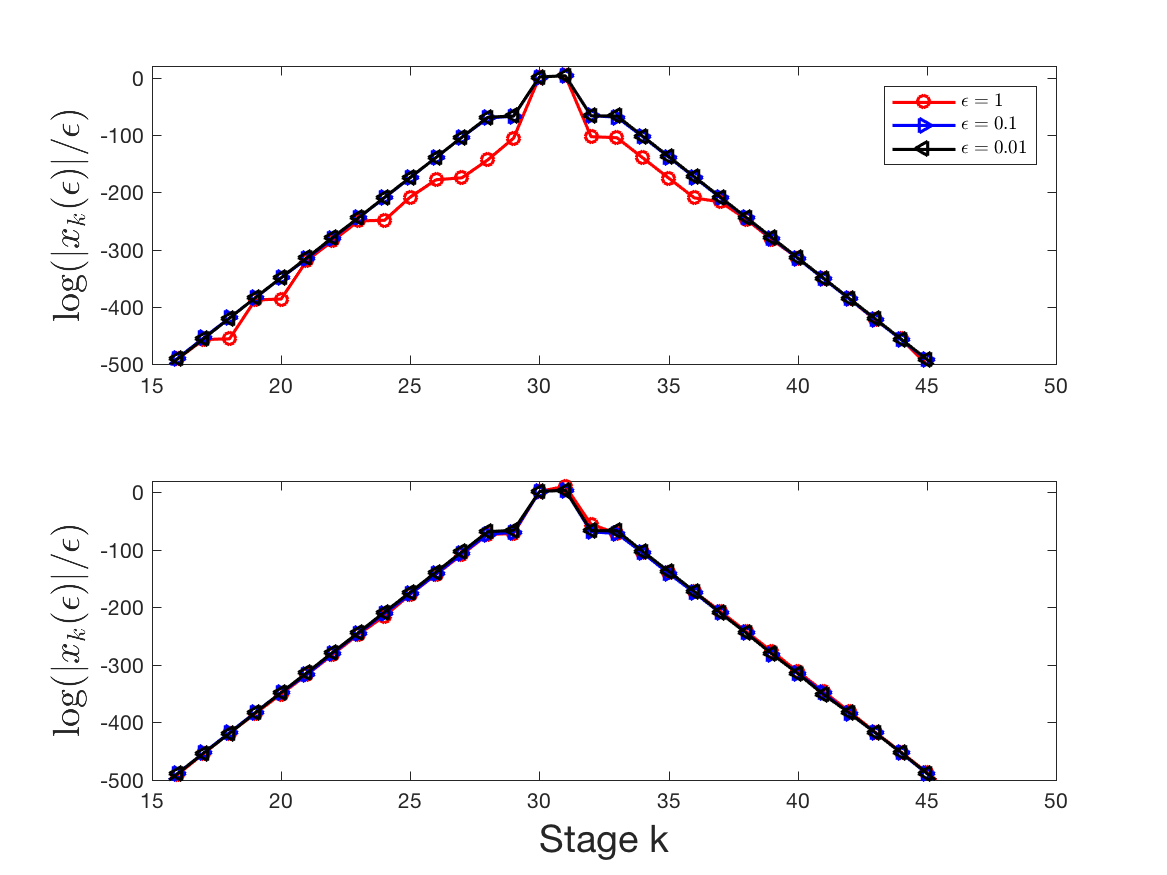}}
	\subfloat[$N=60$, $\mu = (100,15)$]{\label{w9}\includegraphics[width=4.7cm,height=4.8cm]{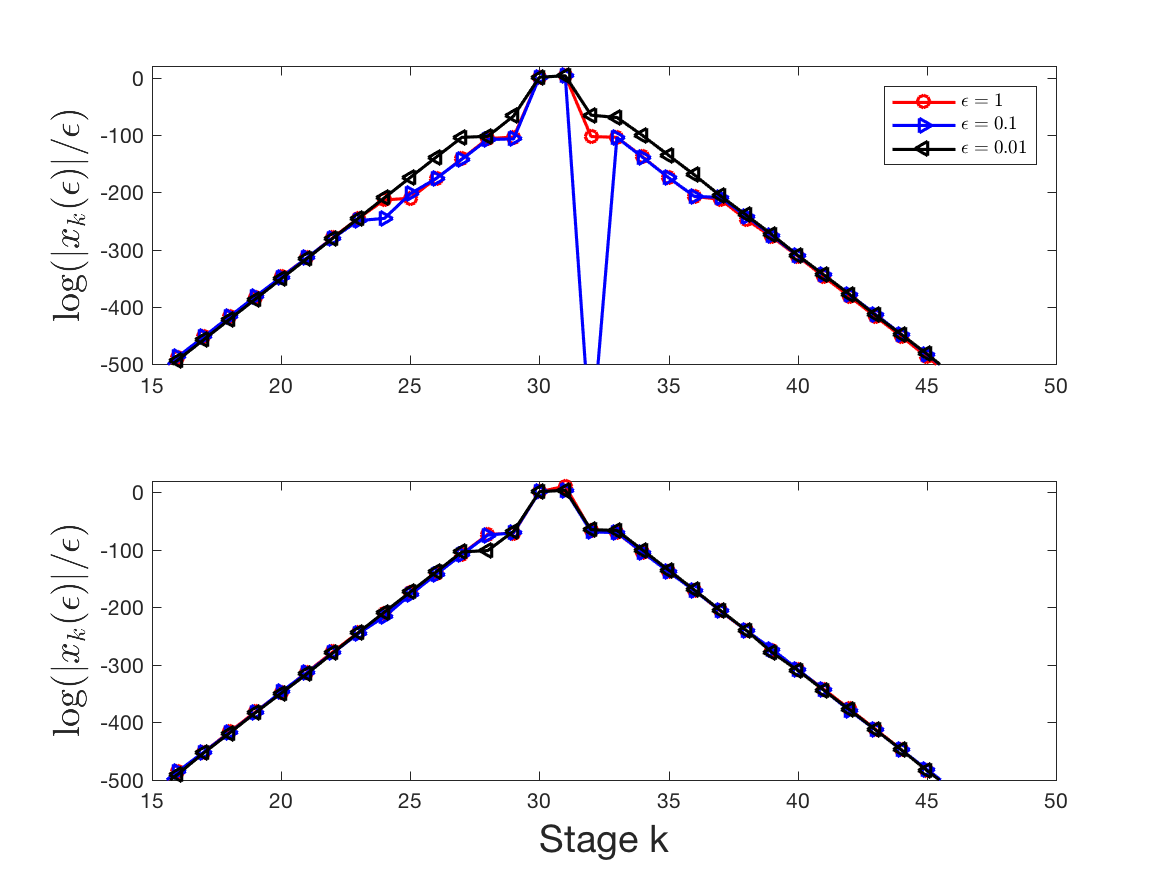}}
\caption{\textit{Sensitivity Plot. Each plot corresponds to a combination of $N$ and $\mu = (\mu_1, \mu_2)$. Within each plot, the upper part corresponds to $f(x)=x$ while the bottom part corresponds to $f(x) = \exp(x)-1$. We truncate the $y$-axis from below -500. Except for some fluctuations, we do observe obvious linear decay trends in all settings, which is consistent with our theoretical result.}}\label{fig:1}
\end{figure}

\section{Conclusion and future work}\label{sec:8}

In this paper, we study the sensitivity analysis for nonlinear dynamic programming with equality constraints. From the theoretical foundation for sensitivity analysis in \cite{Bonnans2000Perturbation} we derive a QDP whose solution is the sensitivity vector but for which the quadratic objective matrix may be indefinite. Dealing with indefinite quadratic matrices has intrinsic difficulties in both practice and theory, which we address through the convexification procedure in \cite{Verschueren2017Sparsity}. To fit that procedure in our setup, we extend the procedure to a QDP with linear shift, and we delve deeper into this algorithm and propose a feasible interval for the regularization hyperparameter $\delta$. Then we show that the algorithm can preserve the boundedness of the data as long as controllability holds, based on which we prove the exponential decay property of sensitivity. That is, we show that under  mild Assumptions \ref{ass:2} and \ref{ass:3}, when we perturb the reference variable at stage $i$ along any direction, the magnitude of the directional derivative for the state and the control variable at stage $k$ (i.e. $\bp_k$ and $\bq_k$) will have exponential decay in terms of $|i-k|$. We also study the dependence of the decay rate with the minimum eigenvalue of the reduced Hessian matrix. We prove that as $\gamma$ gets larger, our bounds modify to the ones of faster decay. Throughout our analysis, we assume a class of NLDP problems, which satisfy uniform SOSC: $\gamma = \lambda_{\min}(Z^THZ)>0$ independent of $N$; and we discuss the likelihood of this setup. Because of this uniform SOSC, all of the quantities in  the theorems including decay rate are independent of $N$. By analogy with the importance of the exponential decay of the sensitivity in our previous work on convex QDP \cite{Xu2018Exponentially}, we anticipate that this result will allow us prove the exponential convergence of temporal decomposition techniques for solving long-horizon NLDP, as was already demonstrated on an instance in \cite{shin2019parallel}, as well as allowing us to prove exponential convergence of nonlinear model predictive control, an extension of \cite{Xu2017Exponentially}.

But we also point out that the analytical setup here may be useful in a general setting in the following sense. Our technique of proof relies on the observation that the sensitivity of the parameter-to-solution mapping of an NLDP, $(\bx^*(\bd), \bu^*(\bd))$, and thus, its approximate local behavior, is the same as of a nonconvex QDP \eqref{equ:9}. Under data boundedness, uniform SOSC, and uniform controllability, we proved that this nonconvex QDP is equivalent (in the sense of having identical parameter-to-solution mappings, which is a feature of this class of convexification approaches \cite{Verschueren2017Sparsity}, stemming from Lemma \ref{lem:3}) to a convex QDP built by the convexification Algorithm \ref{alg:1}  whose data are now uniformly bounded above and objective matrices are bounded below. Therefore classical Riccati techniques can be applied to produce approximate solutions of the NLDP even if the horizon extends to infinity and bring with them stability and robustness-to-uncertainty features. This closes an important gap between the optimization point of view and control point of view of NLDP and may benefit other extensions and design objectives, such as the treatment of uncertainty. 


\section*{Acknowledgment}
This material was based upon work
supported by the U.S. Department of Energy, Office of Science,
Office of Advanced Scientific Computing Research (ASCR) under
Contract DE-AC02-06CH11347 and by NSF
through award CNS-1545046. We gratefully acknowledge discussions with the authors of \cite{Verschueren2017Sparsity}.

\bibliographystyle{siamplain}
\bibliography{ref}
	
\vspace{0.1cm}
\begin{flushright}
	\scriptsize \framebox{\parbox{2.5in}{Government License: The
			submitted manuscript has been created by UChicago Argonne,
			LLC, Operator of Argonne National Laboratory (``Argonne").
			Argonne, a U.S. Department of Energy Office of Science
			laboratory, is operated under Contract
			No. DE-AC02-06CH11357.  The U.S. Government retains for
			itself, and others acting on its behalf, a paid-up
			nonexclusive, irrevocable worldwide license in said
			article to reproduce, prepare derivative works, distribute
			copies to the public, and perform publicly and display
			publicly, by or on behalf of the Government. The Department of Energy will provide public access to these results of federally sponsored research in accordance with the DOE Public Access Plan. http://energy.gov/downloads/doe-public-access-plan. }}
	\normalsize
\end{flushright}	
	
\end{document}